\tikzset{
  mid arrow/.style={postaction={decorate,decoration={
        markings,
        mark=at position .5 with {\arrow[#1]{stealth}}
      }}},
}
\tikzset{
C1col/.style = { MidnightBlue, thick},
C1/.style ={C1col, thick, postaction={mid arrow}},
C2col/.style = {Orange, thick },
C2/.style ={C2col,thick, postaction={mid arrow}},
C12col/.style = {ForestGreen, thick},
C12/.style ={C12col,thick, postaction={mid arrow}},
CC/.style = {thick, C2col, postaction={mid arrow}},
L1/.style = {MidnightBlue, thick, postaction={mid arrow}},
L2/.style = {ProcessBlue,thick, postaction={mid arrow}},
L12/.style = {NavyBlue, thick, postaction={mid arrow}},
L3col/.style = {LimeGreen, thick},
L3/.style = {L3col, postaction={mid arrow}},
L4/.style = {OliveGreen, thick, postaction={mid arrow}},
L34/.style = {ForestGreen, thick, postaction={mid arrow}},
Qs/.style ={black!50!white, opacity=1, every node/.style={black, opacity=1, font  = \scriptsize}},
basepoint/.style ={black, opacity = 1, every node/.style={black}}
}
\setlist[description]{leftmargin=0cm,  labelindent=\parindent}
\renewcommand{\tilde}{\widetilde}
\newcommand{\pp}{\mathbb P}
\newcommand{\OO}{\mathcal O}
\newcommand{\wt}{\widetilde}
\newcommand{\epsi}{\epsilon}
\newcommand{\I}{\mathrm{i}}
\newcommand{\caseR}{{$(R)$}}
\newcommand{\caseB}[1]{$(B_{#1})$}
\newcommand{\caseE}[1]{$(E_{#1})$}
\newcommand{\caseP}[1]{$(P_{#1})$}
\newcommand{\casedP}{$(dP)$}
\title{Gorenstein stable Godeaux surfaces}
\author{Marco Franciosi}
\address{Marco Franciosi\\Dipartimento di Matematica\\Universit\`a di Pisa \\Largo B. Pontecorvo 5\\I-56127  Pisa\\Italy}
\email{marco.franciosi@unipi.it}
\author{Rita Pardini}
\address{Rita Pardini\\Dipartimento di Matematica\\Universit\`a di Pisa \\Largo B. Pontecorvo 5\\I-56127  Pisa\\Italy}
\email{rita.pardini@unipi.it}
\author{S\"onke Rollenske}
\address{S\"onke Rollenske\\FB 12/Mathematik und Informatik\\
Philipps-Universit\"at Marburg\\
Hans-Meerwein-Str. 6\\
35032 Marburg\\
Germany}
\email{rollenske@mathematik.uni-marburg.de}
\begin{document}
\begin{abstract}
We classify Gorenstein stable numerical Godeaux surfaces with worse than canonical singularities and compute their fundamental groups.
\end{abstract}
\subjclass[2010]{14J10, 14J25, 14J29}
\keywords{stable surface, Godeaux surface}
\maketitle

\setcounter{tocdepth}{2}
\tableofcontents
\section{Introduction}

Surfaces of general type with the smallest possible invariants, namely $K_X^2=1$ and $p_g(X) = q(X) =0$ are called (numerical) Godeaux surfaces.
For  a Godeaux surface $X$   with  at most canonical singularities, the algebraic fundamental group $\pi_1^{\alg}(X)$ is known to be cyclic of order $d\le 5$
 (see \cite{miyaoka76, reid78}). Hence $\pi_1^{\alg}(X)$ and the torsion subgroup $T(X)$ of $\Pic(X)$ are finite abelian groups dual to each other. 
  The cases $d=3,4,5$ are completely understood   \cite{reid78, coughlan-urzua16}, and quite a few  examples of Godeaux surfaces with torsion $\IZ/2$ or trivial have been constructed since then\footnote{See e.\,g.\ \cite{Barlow84, Barlow85, catanese-debarre89, Inoue94, Werner94, Dol-Wer99, Dol-Wer01, lee-park07, rana-tevelev-urzua15, coughlan16}.}.
  
In this paper we classify  non-classical Gorenstein stable Godeaux surfaces, i.e.,  Gorenstein stable surface $X$ with $K^2_X=1$ and $p_g(X)=q(X)=0$ which have worse than canonical singularities, and calculate their (topological) fundamental groups.

Denote by $\gothM_{1,1}$ the Gieseker moduli space of \emph{classical} Godeaux surfaces and by $\overline \gothM_{1,1}$ its compactification, the moduli space of stable Godeaux surface (see   \cite{kollar12, KollarModuli} and references therein). The space $\gothM_{1,1}$ is conjectured to have exactly $5$ irreducible components, one for each possible $\pi_1^{\alg}(X)$. 

Some of the examples we construct turn out to be smoothable, that is, they are in the closure of $\gothM_{1,1}$, some are known not to be smoothable and they make up an irreducible component of  $\overline\gothM_{1,1}$. The smoothability of the examples with small fundamental group is still to be investigated. So $\overline\gothM_{1,1}$ has at least six irreducible components but might still turn out to be connected. 

Instead of giving a detailed description of the single cases, we collect an overview of our results in Table  \ref{tab: list}. The statements made in the table and some further applications will be stated and proved in Section \ref{sect: conclusions}; the explicit constructions can be found in the sections indicated in the table.

\begin{table}\caption{List of non-classical Gorenstein stable Godeaux surfaces}\label{tab: list}
\begin{tabular}{clccl}
 \toprule
 $|\pi_1(X)| $ & case & normal & smoothable & reference\\
 \midrule
   & \caseR & \checkmark & \checkmark &  Section \ref{sect: normal classification}\\
 {$5$} & \caseE5 general & --- & \checkmark & Section \ref{sect: case E} \\
  & $X_{1.5}$ &--- & \checkmark & Section \ref{sect: case P} \\
 \midrule
  \multirow{4}*{$4$}  & \caseR & \checkmark & \checkmark &  Section \ref{sect: normal classification}\\
& \caseB1 & \checkmark & \checkmark &  Section \ref{sect: normal classification} \\
   & \caseP1 & --- & \checkmark & Section \ref{sect: case P} \\
   & \caseE4 general& --- & \checkmark & Section \ref{sect: case E} \\
  \midrule
  \multirow{4}*{$3$}& \caseR & \checkmark & \checkmark &  Section \ref{sect: normal classification}\\
  & \caseB2& \checkmark & \checkmark &  Section \ref{sect: normal classification} \\
   & \caseP3 & --- & \checkmark & Section \ref{sect: case P} \\
   & \caseE3 general & --- & \checkmark & Section \ref{sect: case E} \\
     \midrule
   $2$ & \caseE2 general& --- & unknown & Section \ref{sect: case E} \\
     \midrule
   & \caseP2 & --- & unknown & Section \ref{sect: case P} \\
   & \caseE1  & --- & unknown & Section \ref{sect: case E} \\
  $1$ & \caseE{2}, reducible polarisation&--- & unknown & Section \ref{sect: case E}\\
   & \caseE{m}, reducible pol., $m\geq3$&--- &  \checkmark\  (Rem.\ \ref{rem: smoothing reducible polarisation}) & Section \ref{sect: case E}\\
    & \casedP & --- & no  \cite{rollenske16} & Section \ref{sect: case dP}\\

 \bottomrule
\end{tabular}
\end{table}

\subsection*{Acknowledgements}  The first and the second  author  are  members of GNSAGA of INDAM. The third author is grateful for support of the DFG through the Emmy Noether program and partially through SFB 701. 
This project was partially supported by PRIN 2010 ``Geometria delle Variet\`a Algebriche'' of italian MIUR. 

We are indepted to Angelo Vistoli for Proposition \ref{prop:angelo}. 
The third author would like to thank Stephen Coughlan and  Roberto Pignatelli for discussions about Godeaux surfaces.

\section{Normal non-canonical stable Godeaux surfaces}\label{sect: normal}

\subsection{Classification of possible cases}\label{sect: normal classification}
In this section we study  normal  Gorenstein  stable  Godeaux surfaces that are ``non-canonical'', namely have worse than canonical singularities.
We first refine the results  in  \cite[\S 4]{FPR15a} in this particular case. 

\begin{prop}\label{prop: normal-class} Let $X$ be a normal Gorenstein stable  surface with $\chi(X) = K^2_X=1$  and  let $\epsi\colon \wt X\to X$ be the  minimal desingularisation. 

Then $X$ has precisely one elliptic singularity and the exceptional divisor $D$ of $\epsi$ is a smooth elliptic curve; in addition, one of the following occurs: 

\begin{itemize}
 \item[(B)] There exists a bi-elliptic surface $X_{\min}$,  an irreducible   divisor  $D_{\min}$ on $X_{\min}$ and a point $P$ such that:
\begin{itemize}
\item $D^2_{\min}=2$ and $P\in D_{\min}$ has multiplicity $2$
\item  $\wt X$ is the blow-up of $X_{\min}$ at $P$
\item $X$ is obtained from $\wt X$ by blowing down the strict transform  of $D_{\min}$.
\end{itemize}
\item [\caseR] $\wt X$ is a ruled surface with $\chi(\wt X)=0$.
\end{itemize}
\end{prop}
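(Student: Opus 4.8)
The plan is to pass to the minimal desingularisation $\epsi\colon\wt X\to X$, read off the birational type of $\wt X$ from its invariants, and then reconstruct $X$. Since $X$ is Gorenstein and log canonical, its non-canonical points are Gorenstein log canonical but non-rational, hence simple elliptic or cusp singularities, each of geometric genus $1$; let $D=\sum_{j=1}^n D_j$ be the reduced exceptional divisor lying over the $n\ge 1$ non-canonical points, so that $K_{\wt X}=\epsi^*K_X-D$ (the curves over canonical points have discrepancy $0$). Put $P:=\epsi^*K_X=K_{\wt X}+D$; this is nef and big with $P^2=1$, $P\cdot D=0$ and $P\cdot K_{\wt X}=P^2-P\cdot D=1$. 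Recalling from \cite{FPR15a} that $q(X)=p_g(X)=0$, a Leray computation using that $R^1\epsi_*\OO_{\wt X}$ has length $n$ gives $q(\wt X)=n$, $p_g(\wt X)=0$, hence $\chi(\wt X)=1-n$.

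Next I would show $n=1$. If $\kappa(\wt X)\ge 0$ then $\chi(\wt X)\ge 0$, forcing $n\le 1$. If instead $\kappa(\wt X)=-\infty$, then $\wt X$ is ruled over a curve of genus $q(\wt X)=n$; were $n\ge 2$, each component of $D$ has geometric genus at most $1$ and so maps to a point of the base, placing the connected curve $D$ (with $p_a(D_j)=1$) inside a rational fibre, which is absurd. Hence $n=1$, so $\chi(\wt X)=0$ and $q(\wt X)=1$; in particular $\wt X$ is not of general type, so $\kappa(\wt X)\le 1$ and the Albanese map is a fibration $a\colon\wt X\to E$ onto an elliptic curve.

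I would then rule out cusps and split into cases at once. The minimal model of $\wt X$ has topological Euler number $12\chi=0$, so in the cases $\kappa(\wt X)\ge 0$ its Albanese/elliptic fibration has only smooth (possibly multiple) elliptic fibres, while in the ruled case the fibres are rational; after the blow-ups producing $\wt X$, every fibre of $a$ is therefore either rational or a single smooth elliptic curve carrying trees of rational curves. Such a fibre contains no cycle of rational curves, so the non-canonical singularity cannot be a cusp and $D$ is a smooth elliptic curve, as claimed. If $\kappa(\wt X)=-\infty$ we are in the ruled situation with $\chi(\wt X)=0$, which is case \caseR. If $\kappa(\wt X)=0$, the Enriques--Kodaira classification identifies the minimal model of $\wt X$ as the only minimal surface with $\kappa=0$ and $q=1$, namely a bi-elliptic surface, heading towards case (B).

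The remaining point, which I expect to be the main obstacle, is to exclude $\kappa(\wt X)=1$ and to reconstruct the blow-up data in case (B); both are governed by $P\cdot K_{\wt X}=1$. Writing $\pi\colon\wt X\to Y$ for the contraction to the minimal model and $K_{\wt X}=\pi^*K_Y+\sum a_i\mathcal E_i$ with $a_i\ge 1$, no $\pi$-exceptional curve is $\epsi$-exceptional (the latter are the elliptic $D$ and $(-2)$-curves, never $(-1)$-curves), so $P\cdot\mathcal E_i\ge 1$ for each $i$. In the bi-elliptic case $K_Y\equiv 0$ gives $\sum a_i(P\cdot\mathcal E_i)=1$, forcing a single blow-up; then $K_{\wt X}^2=K_Y^2-1=-1$ combined with $K_{\wt X}^2=(\epsi^*K_X-D)^2=1+D^2$ yields $D^2=-2$ and $p_a(\pi(D))=2$, and the geometry of bi-elliptic surfaces supplies $D_{\min}=\pi(D)$ with $D_{\min}^2=2$ passing doubly through the blown-up point $P$ — exactly the stated description. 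In the elliptic case $K_Y\equiv\lambda F$ with $\lambda>0$ a multiple of the fibre class, so $1=P\cdot K_{\wt X}=\lambda\,(P\cdot\pi^*F)+\sum a_i(P\cdot\mathcal E_i)$ with $\lambda\,(P\cdot\pi^*F)>0$ forces $\pi$ to be an isomorphism, $D^2=-1$, a single double fibre $F_1$, and $D$ a degree-$2$ \'etale cover of $E$ with $D\cdot F_1=1$; but \'etaleness puts two points of $D$ over the base point of $F_1$, giving $D\cdot F_1\ge 2$, a contradiction. This excludes $\kappa(\wt X)=1$ and completes the dichotomy.
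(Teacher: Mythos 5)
Your overall strategy --- computing the invariants of $\wt X$, reducing to $n=1$ and $\chi(\wt X)=0$, excluding cusps via the Albanese map, and then using $\epsi^*K_X\cdot K_{\wt X}=1$ to pin down the blow-up data --- runs parallel to the paper's, and your treatment of the bi-elliptic reconstruction and of case \caseR\ is essentially correct (modulo the small point that when $\pi$ has infinitely near centres some $\pi$-exceptional curves are $(-2)$-curves, so you should argue with the total transforms $\mathcal E_i$, each of which contains a $(-1)$-curve in its support, rather than claiming no $\pi$-exceptional curve is $\epsi$-exceptional).

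The genuine gap is in your exclusion of $\kappa(\wt X)=1$. For a minimal properly elliptic surface with $p_g=0$, $q=1$, the Iitaka (elliptic) fibration need not coincide with the Albanese fibration: writing the surface as a quasi-bundle $(F\times C)/G$, one may have $g(F/G)=1$ and $g(C/G)=0$, so that the Albanese map is the genus-$\geq 2$ fibration over the elliptic curve $F/G$, while the elliptic fibration maps onto $\pp^1$. In that situation the canonical bundle formula reads $K_Y\equiv\bigl(-2+\sum_i\tfrac{m_i-1}{m_i}\bigr)F$, the coefficient can be an arbitrarily small positive rational, and the equation $1=\lambda\,(D\cdot F)$ admits solutions with three or more multiple fibres $m_iG_i$ and large $D\cdot F$ (for instance multiplicities $2,3,7$ with $D\cdot F=42$, all $D\cdot G_i$ integral). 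Your contradiction --- ``$D$ is an \'etale double cover of $E$, hence meets the reduced double fibre in at least two points'' --- uses both that there is a single double fibre and that the base of the elliptic fibration is an elliptic curve, neither of which holds here; a degree-$d$ map from the elliptic curve $D$ to $\pp^1$ is ramified, so no contradiction falls out of counting points over the multiple fibres without an extra argument (a Hurwitz count does close this case, since the ramification forced over the multiple-fibre points exceeds the total $2d$ allowed, but you would need to carry it out). The paper avoids the issue entirely: from $c_2(\wt X)=0$ it realises $\wt X$ as a free quotient $(F\times C)/G$ with $g(C)\geq 2$, observes that any component $\Gamma$ of the preimage of $\wt D$ is \'etale over $\wt D$, hence elliptic, hence contracted by the projection to $C$, so that $\wt D$ lies in a fibre of the elliptic fibration and $\wt D^2=0$, contradicting $\wt D^2=-1$ --- an argument insensitive to which of the two fibrations is the Albanese.
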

We first prove:
\begin{lem}\label{lem:godeaux-normal-1}
In the assumptions of Proposition \ref{prop: normal-class}, one has:
\begin{enumerate}
\item $q(\wt X)=1$ and $p_g(\wt X)=0$
\item $X$ has exactly one elliptic singularity and the corresponding exceptional divisor is a smooth elliptic curve
\end{enumerate} 
\end{lem}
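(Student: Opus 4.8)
The plan is to read all of the statement off from the classification of the singularities of $X$ together with standard surface theory applied to $\wt X$. First I would record that, since $X$ is stable and normal, its singularities are log canonical, and being Gorenstein they are therefore either canonical (rational double points) or one of the two worse-than-canonical Gorenstein log canonical surface singularities, namely simple elliptic singularities or cusps. Both of the latter are minimally elliptic, so each contributes a one-dimensional stalk to $R^1\epsi_*\OO_{\wt X}$. Write $n\ge1$ for the number of such non-canonical points, guaranteed by the hypothesis that $X$ is not canonical, and let $\Delta\ge0$ be the reduced sum of the corresponding exceptional cycles (a smooth elliptic curve for a simple elliptic point, a cycle of rational curves for a cusp). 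Since rational double points are crepant while each of these components has discrepancy $-1$, one has $K_{\wt X}=\epsi^*K_X-\Delta$, so that $K_{\wt X}+\Delta=\epsi^*K_X$ is nef and big with $(K_{\wt X}+\Delta)^2=K_X^2=1$.

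Next I would compute the invariants of $\wt X$. Because $\Delta\ge0$, the inclusion $\omega_{\wt X}=\epsi^*\omega_X(-\Delta)\hookrightarrow\epsi^*\omega_X$ together with $\epsi_*\epsi^*\omega_X=\omega_X$ gives $p_g(\wt X)=h^0(\omega_{\wt X})\le h^0(\omega_X)=p_g(X)=0$, so $p_g(\wt X)=0$ (here I use $p_g(X)=q(X)=0$ for the numerical Godeaux surface $X$). The Leray spectral sequence for $\epsi$, using that $R^1\epsi_*\OO_{\wt X}$ is supported at the non-canonical points with one-dimensional stalks and $R^2\epsi_*\OO_{\wt X}=0$, yields $\chi(\OO_{\wt X})=\chi(\OO_X)-n=1-n\le0$. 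Since a minimal surface of general type has $\chi>0$ and $\chi(\OO)$ is a birational invariant, $\wt X$ is not of general type, so $\kappa(\wt X)\le1$.

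The heart of the argument, and the step I expect to be the main obstacle, is to prove $n=1$. If $\kappa(\wt X)\ge0$ then $\chi(\OO_{\wt X})\ge0$, whence $1-n\ge0$ and $n=1$. If instead $\wt X$ is ruled, with relatively minimal ruling onto a curve $B$, then $\chi(\OO_{\wt X})=1-g(B)$ forces $g(B)=n$; restricting the nef class $\epsi^*K_X=K_{\wt X}+\Delta$ to a general fibre $F\cong\pp^1$ and using $K_{\wt X}\cdot F=-2$ gives $\Delta\cdot F\ge2$, so $\Delta$ must have a component dominating $B$. A rational component admits no non-constant map to $B$ once $g(B)\ge1$, and an elliptic component admits one only when $g(B)\le1$ by Riemann--Hurwitz; hence $g(B)\le1$ and again $n=1$. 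In every case $n=1$, and then $\chi(\OO_{\wt X})=0$ together with $p_g(\wt X)=0$ gives $q(\wt X)=1$, proving part (1).

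Finally I would identify the unique non-canonical point as simple elliptic, excluding the cusp, which is part (2). In the ruled case this is immediate from the previous paragraph: the estimate $\Delta\cdot F\ge2$ shows $\Delta$ is horizontal, whereas a cusp cycle is a union of rational curves and is therefore vertical over the elliptic base $B$, a contradiction; so $\Delta$ is a smooth elliptic curve. In the case $\kappa(\wt X)=0$ the invariants $q(\wt X)=1$, $p_g(\wt X)=0$, $\chi(\OO_{\wt X})=0$ force the minimal model of $\wt X$ to be bi-elliptic, which contains no rational curves; since a cusp cycle has positive arithmetic genus while any configuration contracted to a smooth point by a sequence of blow-ups is a tree, $\Delta$ cannot be a cusp and is again a smooth elliptic curve. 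This establishes the lemma. I expect the most delicate points to be the nef-fibre estimate ruling out a ruling base of genus $\ge2$ and the tree-versus-cycle argument excluding cusps.
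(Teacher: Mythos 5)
Your argument is organised quite differently from the paper's: you run a case analysis on $\kappa(\wt X)$, using $\chi\ge 0$ for non-ruled surfaces, the nef restriction $(K_{\wt X}+\Delta)\cdot F\ge 0$ to a ruling fibre, and a tree-versus-cycle argument for configurations contracted to smooth points. The paper instead gets everything from one uniform observation: since $h^0(K_{\wt X}+\wt D_i)\le p_g(X)=0$, the coboundary $H^0(K_{\wt D_i})\to H^1(K_{\wt X})$ of the adjunction sequence is injective, so no exceptional divisor lies in a fibre of the Albanese map; hence each $\wt D_i$ dominates the Albanese image, which must then have genus $1$, giving $q(\wt X)=1$, $k=1$ and the smoothness of $\wt D$ simultaneously and independently of $\kappa(\wt X)$. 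Your part (1) is correct as written.

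Part (2), however, has a genuine gap: having reduced to $\kappa(\wt X)\le 1$, you treat only the ruled case and the case $\kappa(\wt X)=0$, and never address $\kappa(\wt X)=1$. A properly elliptic $\wt X$ with $\chi(\OO_{\wt X})=0$, $q=1$, $p_g=0$ is a live possibility at this point in the argument --- indeed the paper's proof of Proposition \ref{prop: normal-class} must do real work (the quasi-bundle argument via $c_2(\wt X)=0$) to rule it out, and that later argument \emph{uses} part (2) of this lemma, namely that the exceptional divisor is a smooth genus-$1$ curve. So the case cannot be dismissed, and your bi-elliptic argument does not transfer: a minimal properly elliptic surface may a priori contain rational curves (as components of singular or multiple fibres), so a cycle of rational curves sitting inside an Albanese fibre is not excluded by anything you have said. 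To close the gap you need an extra input in this case --- either the quasi-bundle structure forced by $c_2=0$, or, more efficiently, the coboundary injection above, which shows directly that the exceptional divisor is not contracted by the Albanese map to the elliptic base and therefore has an irrational, hence elliptic, component.
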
 
\begin{proof}
 By \cite[Thm.~4.1]{FPR15a}, the singularities of  $X$ are either  canonical or elliptic Gorenstein. So we write $\epsilon^* K_X=K_{\wt X}+\wt D_1+\dots +\wt D_{k}$, with the $\wt D_i$ disjoint $2$-connected effective  divisors with $p_a(\wt D_i)=1$. We write  $\wt D=\wt D_1+\dots +\wt D_{k}$.  
 
 Since $k>0$ by assumption, by  \cite[Lem.~4.1]{FPR15a} we have $\chi(\wt X)=\chi(X)-k=1-k\le 0$, hence $q(\wt X)>0$. Again by \cite[Lem.~4.1]{FPR15a} we  also have $p_g(\wt X)\le h^0(K_{\wt X}+\wt D)=p_g(X)=0$;   it follows that the Albanese image of $\wt X$ is a curve $B$.  
  For every $i=1, \dots, k$ the standard restriction sequence induces  an injection $\IC\simeq H^0(K_{\wt D_i}) \to H^1(K_{\wt X})$, hence no  $\wt D_i$ is contained in a fibre of the Albanese map of $\wt X$.  It follows that $B$ has genus 1, and therefore   $q(\wt X)=1$, $\chi(\wt X)=0$, $k=1$ and $\wt D=\wt D_1$ is a smooth elliptic curve. 
  \end{proof}
  
 \begin{proof}[Proof of Proposition \ref{prop: normal-class}]
 By Lemma \ref{lem:godeaux-normal-1},  by \cite[Thm.~4.1]{FPR15a} and by the classification of surfaces, it is enough to exclude that $\wt X$ is a minimal properly elliptic surface and that the exceptional divisor $\wt D$ of the minimal resolution $\epsilon \colon \wt X\to X$ is a smooth curve of genus 1 with  $\wt D^2=-1$.
 
 So assume by contradiction that this is the case. Since $c_2(\wt X)=0$ by Noether's formula,  applying the  formula for computing the Euler characteristic  (\cite[Prop.~III.11.4]{BHPV})  to the elliptic  fibration $f\colon \wt X\to B$ one sees that all fibres of $f$ have smooth support, namely $f$ is a \textit{quasi-bundle} (cf. \cite{serrano93}, \cite{serrano96}).
 So the surface  $\wt X$ is a free  quotient $(F\times C)/G$ where:
 \begin{itemize} 
 \item $F$ is a curve  of genus 1, $C$ is a curve of genus $g>1$
 \item $G$ is a finite group that acts  faithfully on $F$ and  $C$; we let $G$ act   diagonally on $F\times C$
 \item the elliptic fibration is induced by the second projection $p_2\colon F\times C \to C$.
 \end{itemize}
 Let now $\Gamma\subset F\times C$ be an irreducible component of the preimage of  $\wt D$: the map $\Gamma\to \wt D$ is \'etale, and so $\Gamma$ is an elliptic curve and is mapped to a point by $p_2$. It follows that $\wt D$ is mapped to a point by $f$, hence $\wt D^2=0$, against the assumptions. 
 \end{proof}
  
The surfaces of type $(B)$ can be described more explicitly. We start by giving two examples.
  \begin{exam}\label{ex:godeaux-bi-elliptic}
 By Proposition \ref{prop: normal-class}, a normal Gorenstein surface of type (B) is determined by a pair   $(X_{\min},  D_{\min})$ where $X_{\min}$ is a minimal bi-elliptic surface and $D_{\min}$ is  an irreducible  divisor with $D_{\min}^2=2$ that has a double point $P$. 
 We describe two types of such pairs:
\begin{itemize}
\item[\caseB{1}] 
Let $E$ be an elliptic curve, set $A:=E\times E$  and let  generators  $e_1,e_2\in G\cong (\IZ/2)^2$ act on $A$ as follows:
$$(x,y)\stackrel{e_1}{\mapsto}(x+\tau_1,y+\tau_1); \quad (x,y)\stackrel{e_2}{\mapsto}(x+\tau_2,-y),$$
where $\tau_1$ and $\tau_2$ generate $E[2]$.
The group $G$ acts freely on $A$ and the surface  $X_{\min}:=A/G$ is bi-elliptic. 

The diagonal $\Delta\subset A$ is $e_1$-stable and it intersects $e_2\Delta$ transversally at 4 points. The divisor $\Delta +e_2\Delta$ is $G$-invariant and its image $D_{\min}$ in $X_{\min}$ is and irreducible curve with $D_{\min}^2=2$ that has  a node $P$.

\item[\caseB{2}]Let $\zeta:=e^{\frac{2\pi i}{3}}$,  denote by $E$ the elliptic curve $\IC/(\IZ+\IZ\zeta)$ and denote by $\rho$ the order 3 automorphism of $E$ induced by multiplication by $\zeta$. We  let $A:=E\times E$  and consider the following automorphisms of $A$:
$$(x,y)\stackrel{e_1}{\mapsto}(x+\tau_1,y+\tau_1); \quad (x,y)\stackrel{e_2}{\mapsto}(x+\tau_2,\rho y),$$
where $\tau_1=\frac{1-\zeta}{3}$ and $\tau_1$ and $\tau_2$ generate $E[3]$. 
Since $\rho(\tau_1)=\tau_1$, the automorphisms  $e_1$ and $e_2$ commute,  and they generate a  subgroup $G$ of $\Aut(A)$ isomorphic to $(\IZ/3)^2$. The group $G$  acts freely on $A$ and the surface  $X_{\min}:=A/G$ is bi-elliptic. 

 The diagonal $\Delta\subset A$ is $e_1$-stable 
  and it intersects $e_2\Delta$ and $(2e_2)\Delta$ transversally at 3 points. By symmetry, $e_2\Delta$ and $(2e_2)\Delta$ also intersect  transversally at 3 points. So the divisor $Z:=\Delta +e_2\Delta+(2e_2)\Delta$ is $G$-invariant and $Z^2=18$. It follows that the image $D_{\min}$ of $Z$ in $X$ is 
   irreducible with $D_{\min}^2=2$.  The induced map $\Delta/\langle e_1\rangle\to D_{\min}$ is birational, hence the normalisation of $D_{\min}$ has genus 1 and by the adjunction formula $D_{\min}$ has precisely one double point. 
   
  Note that the group $\Aut(E)$ acts transitively on the complement of the subgroup $\langle\tau_1\rangle$ in $E[3]$, so that the construction is independent of the choice of $\tau_2$.
  \end{itemize}
\end{exam}

  \begin{prop}\label{prop:godeaux-normal-B}
Let $X$ be a normal  Gorenstein  Godeaux surface of type (B). Then $X$ is obtained as in case \caseB{1} or \caseB{2} of Example \ref{ex:godeaux-bi-elliptic}.
\end{prop}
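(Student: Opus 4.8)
The plan is to pull the datum $(X_{\min},D_{\min})$ back to the abelian surface covering $X_{\min}$ and to convert the two conditions $D_{\min}^2=2$ and ``one double point'' into rigid numerical constraints on the covering group. Write $X_{\min}=(E\times F)/G$ in Bagnera--de Franchis form, with $G$ acting on the first factor $E$ by translations through an injective homomorphism $t\colon G\hookrightarrow E$ and faithfully on $F$ with $F/G\cong\pp^1$, and let $\pi\colon A:=E\times F\to X_{\min}$ be the associated étale $G$-cover. Let $\Aut_0(F)$ be the (cyclic) group of automorphisms of $F$ fixing the origin, set $m=|\operatorname{im}(G\to\Aut_0(F))|$ and $K=\ker(G\to\Aut_0(F))$, so that $K$ acts on $A$ by translations and $[G:K]=m$; recall $m\in\{2,3,4,6\}$ is the order of $K_{X_{\min}}$ in $\Pic(X_{\min})$. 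Since $\pi$ is étale, $Z:=\pi^*D_{\min}=\pi^{-1}(D_{\min})$ is a reduced $G$-invariant divisor with $Z^2=|G|\,D_{\min}^2=2|G|$ and, as $K_A=0$, arithmetic genus $p_a(Z)=1+|G|$; moreover $\operatorname{Sing}(Z)=\pi^{-1}(P)$ consists of exactly $|G|$ points, each analytically isomorphic to $(D_{\min},P)$.

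First I would show that every irreducible component of $Z$ is a \emph{smooth} elliptic curve. Since $P$ is a double point resolved by a single blow-up, $\delta_P=1$, so the total $\delta$-invariant of $Z$ is $|G|$; combined with $p_a(Z)=1+|G|$ and the formula $p_a(Z)=1-c+\sum_i g(\wt\Gamma_i)+\delta(Z)$ for the $c$ components $\Gamma_i$ this gives $\sum_i g(\wt\Gamma_i)=c$. As an abelian surface contains no rational curves, each $g(\wt\Gamma_i)\geq1$, forcing $g(\wt\Gamma_i)=1$ for all $i$; the normalisation $\wt\Gamma_i\to A$ is then, up to translation, a homomorphism of abelian varieties, and being birational onto its image it is an isomorphism onto a translated elliptic subgroup, so each $\Gamma_i$ is a smooth elliptic curve with $\Gamma_i^2=0$. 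In particular the branches of $Z$ over $P$ are smooth, so $P$ is a \emph{node} and distinct components of $Z$ cross transversally over it. Setting $a=\Gamma_i\cdot[\{pt\}\times F]$ and $b=\Gamma_i\cdot[E\times\{pt\}]$ — independent of $i$ because the $\Gamma_i$ are $G$-conjugate and $G$ preserves the two ruling classes — one has $a,b\geq1$: if, say, $b=0$ then every component would be a fibre $E\times\{pt\}$, the $\Gamma_i$ would be pairwise disjoint, and $Z^2=0$, a contradiction.

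The heart of the argument is a divisibility. Let $H=\operatorname{Stab}_G(\Gamma_1)$ and $c=[G:H]$. Since $H$ acts freely on $\Gamma_1$ with quotient the normalisation $\wt D_{\min}$, a smooth elliptic curve by Proposition~\ref{prop: normal-class}, it acts by translations; pushing these translations forward along the isogeny $\Gamma_1\to F$ shows every $h\in H$ acts on $F$ by a translation, i.e.\ $H\subseteq K$, whence $m\mid c$. On the other hand $\mathrm{NS}(X_{\min})\otimes\mathbb{Q}$ is spanned by the classes $A_\alpha,F_\beta$ of the two elliptic fibrations of $X_{\min}$, with $A_\alpha^2=F_\beta^2=0$ and $A_\alpha\cdot F_\beta=|G|$, so $D_{\min}^2=2$ becomes $(D_{\min}\cdot A_\alpha)(D_{\min}\cdot F_\beta)=|G|$; pulling these numbers back to $A$ (using $\pi^*A_\alpha=|G|[\{pt\}\times F]$ and $\pi^*F_\beta=|G|[E\times\{pt\}]$) gives $D_{\min}\cdot A_\alpha=ca$ and $D_{\min}\cdot F_\beta=cb$, hence
\[
|G|=c^2ab .
\]
As $m\mid c$ this forces $m^2\mid|G|$. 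Running through the seven Bagnera--de Franchis types, $m^2\mid|G|$ holds only for $G\cong(\IZ/2)^2$ with $m=2$ and for $G\cong(\IZ/3)^2$ with $m=3$, and in both $|G|=m^2$; combined with $m\mid c$, the relation $c^2ab=m^2$ forces $c=m$, $a=b=1$ and $H=K$. Thus both projections restrict to isomorphisms on $\Gamma_1$, so $E\cong F$ and $\Gamma_1$ is a diagonal, and $G\cong(\IZ/m)^2$ is exactly the group of case \caseB1 ($m=2$) or \caseB2 ($m=3$).

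It remains to match this rigid data with the explicit models: identify $F$ with $E$ via the isomorphism furnished by $\Gamma_1$, write $K=\langle(\tau_1,\tau_1)\rangle$ and a complementary generator $e_2\colon(x,y)\mapsto(x+\tau_2,\sigma y)$ with $\sigma=-1$ or $\sigma=\rho$, and verify — after absorbing the residual $y$-translation by a coordinate change and using $\rho(\tau_1)=\tau_1$ in the order-$3$ case — that $(\tau_1,\tau_2)$ generate $E[m]$ and that $D_{\min}=\pi(\Gamma_1)$ is the image of $\Delta+e_2\Delta+\cdots$ of Example~\ref{ex:godeaux-bi-elliptic}. I expect the genuine obstacle to be the central divisibility step, that is, establishing cleanly the two ingredients $A_\alpha\cdot F_\beta=|G|$ and the inclusion $H\subseteq K$; once $|G|=c^2ab$ and $m\mid c$ are in hand everything downstream is bookkeeping, and the final matching with the normal forms is routine but needs care to show independence of the auxiliary choices.
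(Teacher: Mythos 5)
Your proposal is correct and follows essentially the same route as the paper: pull $D_{\min}$ back to the abelian surface, show the components of the preimage are smooth elliptic curves, prove the stabilizer $H$ of a component acts by translations, derive $[G:H]^2\mid |G|$ from the intersection numbers, and eliminate five of the seven Bagnera--de Franchis types. The only (immaterial) differences are in execution: you get the genus of the components from a $\delta$-invariant count where the paper uses that the normalisation of the preimage is an \'etale cover of the exceptional elliptic curve, and you prove $H\subseteq K$ via equivariance of the second projection where the paper argues with the action on holomorphic $1$-forms.
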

\begin{proof}
Let $(X_{\min}, D_{\min})$ be the pair giving the surface $X$ as in Proposition \ref{prop: normal-class}. 
The surface $X_{\min}$, being bi-elliptic,  is  a quotient $(B\times C)/G$, where $B$ and $C$ are elliptic curves, $G$ is a finite abelian group of order $m$ that  acts faithfully on $B$  and  $C$ and:
\begin{itemize}
\item the action of $G$  on  $B\times C$ is the diagonal action
\item  $B/G$ is elliptic and $C/G$ is rational. 
\end{itemize} 

Denote by $\Gamma$ the preimage of $D_{\min}$ in $B\times C$ and write $\Gamma=\Gamma_1+\dots +\Gamma_r$, with the $\Gamma_i$ irreducible.  
Notice that by construction the $\Gamma_i$ form a $G$-orbit and that   $r=[G:H]$, where $H$ is the stabilizer of  $\Gamma_1$.  We claim that all elements of $H$ are translations. Indeed, assume by contradiction that $h\in H$ is not a translation. Then, denoting by $\sigma$ a generator of $H^0(\omega_C)$,   we have $h^*\sigma=\lambda \sigma$ for some  $1\ne \lambda \in \IC$. On the other hand, the action of $H$ on $\Gamma_1$ is free, since $G$ acts freely on $A$, and therefore we have $h^*\tau=\tau$, where $\tau$ is a generator of $H^0(\omega_{\Gamma_1})$. So, denoting by $p_2\colon B\times C\to C$ the second projection, we conclude that $p_2^*\sigma$ restricts to zero on $\Gamma_1$, and therefore $\Gamma_1$ is a fibre of $p_2$. 
This implies that $D_{\min}$ is contracted by the map $X_{\min}\to C/G$, which is impossible since $D^2_{\min}=2>0$. 

The normalisation of $\Gamma$ is an \'etale cover of the exceptional divisor $\wt D$ of the minimal desingularisation $\epsilon\colon  \wt X\to X$, hence the  $\Gamma_i$  have geometric genus 1; 
  since $B\times C$ is an abelian variety, it follows that the $\Gamma_i$ are smooth elliptic  with  $\Gamma_i^2=0$.
By standard computations, $H^{1,1}(X_{\min})$  is 2-dimensional, generated by the classes of the images $\bar B$ and $\bar C$ of $B\times\{0\}$ and $\{0\}\times C$, hence $D_{\min}$ is a linear combination with rational coefficients of $\bar B$ and $\bar C$.
Pulling back to $B\times C$,  one sees that $\Gamma$ is numerically equivalent to $\beta(B\times \{0\}+\gamma(\{0\}\times C)$ for some $\beta, \gamma\in \IQ$.  We have $m=\frac 12\Gamma^2=\beta\gamma$. Moreover, since the intersection numbers  $\Gamma_i(B\times\{0\})$ and $\Gamma_i(\{0\}\times C)$ are independent of $i$, we have  $\beta=r(\Gamma_1(\{0\}\times C))$ and $\gamma=r(\Gamma_1(B\times \{0\}))$. 
Hence $m=|G|$ is divisible by $[G:H]^2$. Since $H$  consists of translations, 
 this remark rules out five of the seven cases of Bagnera-de Franchis list (cf. \cite[List~VI.20]{beauville-book}) and we are left with the following possibilities:
\begin{gather*}
  G=\IZ/2\times\IZ/2 \text{ and } r=2,\\
G=\IZ/3\times \IZ/3 \text{ and } r=3.
\end{gather*}
In both cases, by the above computations the $\Gamma_i$ project isomorphically onto $B$ and $C$, hence $B$ and $C$ are isomorphic and we may identify $B$ and $C$  in such a way that, say, $\Gamma_1$ is the diagonal.  Hence  we have either case \caseB{1} or \caseB{2}
\end{proof}

\subsection{Computation of fundamental groups}

We devote the rest of the section the description of the fundamental group and thus of  the torsion group of normal non-canonical Gorenstein stable Godeaux surfaces.
Our results, more precisely Proposition \ref{prop: pi1 for B}, Lemma \ref{lem:pi1-R}, and Proposition \ref{prop:torsion-bound-normal}, are summarised in  the following:
\begin{thm}\label{thm:godeaux-normal-torsion}
 Let $X$ be a normal non-canonical   Gorenstein  Godeaux surface. 
 Then:
 \begin{enumerate}
 \item If $X$ is of type \caseB{1} then $\pi_1(X)$ and $T(X)$ are cyclic of order 4. 
\item  If $X$ is of type \caseB{2} then $\pi_1(X)$ and $T(X)$ are cyclic of order 3. 
\item If $X$ is of type \caseR, then  $\pi_1(X)$ and $T(X)$  are cyclic of  order $3\le d\le 5$.
\end{enumerate}
\end{thm}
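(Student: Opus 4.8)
The plan is to compute $\pi_1(X)$ directly in each case and then read off $T(X)$ from it. Since $X$ is a numerical Godeaux surface, $h^1(\OO_X)=q(X)=0$, so $\Pic^0(X)=0$ and $T(X)=\Pic^\tau(X)$ is finite; moreover a torsion line bundle of exact order $n$ is the same datum as a connected étale cyclic cover of degree $n$ of the normal surface $X$, i.e.\ a surjection $\pi_1(X)\to\IZ/n$. This identifies $T(X)$ with the Pontryagin dual $\operatorname{Hom}(H_1(X,\IZ),\IC^*)$ of $H_1(X,\IZ)=\pi_1(X)^{\mathrm{ab}}$. Hence once $\pi_1(X)$ is known to be finite cyclic of order $d$, it follows automatically that $T(X)$ is cyclic of the same order $d$, and all three assertions reduce to the computation of $\pi_1(X)$.

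The homotopy-theoretic engine is the behaviour of $\pi_1$ under the birational operations relating $X$ to a smooth model. Blowing up a smooth point does not change $\pi_1$, so in the type (B) case $\pi_1(\wt X)=\pi_1(X_{\min})$. For the contraction $\epsi\colon\wt X\to X$ of the smooth elliptic curve $\wt D$ I would argue by van Kampen: write $X$ as the union of $\wt X\setminus\wt D$ with the contractible cone on the link $L$ of the elliptic singularity. The two inclusions of $L$ into $\wt X$ — one through $\wt X\setminus\wt D$, the other through a tubular neighbourhood retracting onto $\wt D$ — are homotopic, the meridian of $\wt D$ is already trivial in $\pi_1(\wt X)$, and $\pi_1(L)\to\pi_1(\wt D)$ is onto. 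Together these give
\[
\pi_1(X)\;\cong\;\pi_1(\wt X)\,\big/\,\langle\!\langle\,\operatorname{im}\big(\pi_1(\wt D)\to\pi_1(\wt X)\big)\,\rangle\!\rangle,
\]
so that only the ``horizontal'' classes coming from $\pi_1(\wt D)$ need to be killed.

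For type (B) this makes everything explicit. By Proposition \ref{prop:godeaux-normal-B} and Example \ref{ex:godeaux-bi-elliptic}, $X_{\min}=(E\times E)/G$ with $G=(\IZ/2)^2$ in case \caseB1 and $G=(\IZ/3)^2$ in case \caseB2, while $\wt D$ is the normalisation $\Delta/H$ of $D_{\min}$, where $\Delta$ is the diagonal and $H=\langle e_1\rangle$ its stabiliser. I would present $\pi_1(X_{\min})$ as the group of affine motions of the universal cover $\IC^2$ generated by the four lattice translations together with lifts $g_1,g_2$ of $e_1,e_2$, and observe that $\operatorname{im}(\pi_1(\wt D))$ is generated by the diagonal sublattice $\{(a,a)\}$ and by $g_1$. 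Imposing $g_1=1$, killing the diagonal lattice, and then using the commutator relation $[g_1,g_2]$ together with the linear part of $e_2$ (which inverts the $y$-translations) forces every remaining lattice translation to become a power of $g_2$; the quotient is therefore cyclic, generated by $g_2$. A direct inspection of the surviving relation — $g_2^2$ equals a nontrivial $2$-torsion translation in case \caseB1, whereas $g_2^3=1$ in case \caseB2 — yields $\pi_1(X)\cong\IZ/4$, respectively $\IZ/3$; that $g_2$ has exactly this order follows from the completeness of the presentation, and can be confirmed by exhibiting the corresponding étale $\IZ/d$-cover.

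For type \caseR the smooth model is, by Lemma \ref{lem:godeaux-normal-1} and Proposition \ref{prop: normal-class}, a ruled surface with $q(\wt X)=1$, hence a blow-up of a $\pp^1$-bundle over an elliptic base $B$, so $\pi_1(\wt X)\cong\pi_1(B)=\IZ^2$. Contracting $\wt D$ gives $\pi_1(X)\cong\IZ^2/\operatorname{im}(\pi_1(\wt D))$, a finitely generated abelian group that is finite precisely when $\wt D$ is a multisection of the ruling; I would establish cyclicity and compute the order by analysing the map $\wt D\to B$. The genuinely hard point is the bound $3\le d\le 5$. The lower bound $d\ge 3$ I expect to follow from the geometry of the ruling together with the constraint $K_X^2=1$, excluding $d\le 2$ by a case analysis. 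The upper bound $d\le 5$, however, is the Godeaux-type estimate and is \emph{not} a homotopy statement: for a merely Gorenstein stable surface with $K_X^2=1$ it does not follow from the smooth theory, and must be obtained by a Riemann--Roch and Reider-type analysis bounding $h^0$ of small multiples of $K_X$ twisted by torsion classes. This algebro-geometric estimate, together with the cyclicity, is the main obstacle and is exactly the content one must supply beyond the topological formula above.
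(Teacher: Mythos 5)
Your reduction of $T(X)$ to $\pi_1(X)$ and your van Kampen formula for the contraction of $\wt D$ are both sound and agree with the paper's Lemma~\ref{lem: pi1 contraction} (a Seifert--van Kampen statement quoted from \cite{FPR15b}). For the type (B) cases your route --- presenting $\pi_1(X_{\min})$ as a crystallographic group and killing the normal closure of $\im\bigl(\pi_1(\wt D)\to\pi_1(X_{\min})\bigr)$ --- is genuinely different from the paper, which instead constructs an explicit \'etale $\IZ/4$- (resp.\ $\IZ/3$-) cover of $X_{\min}$ over which $D_{\min}$ splits into smooth components, and then checks by a first-homology computation that the induced cover of $X$ is simply connected. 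Your version is viable, and is arguably more mechanical, provided you actually verify that $g_2$ survives with full order in the quotient (equivalently, exhibit the corresponding \'etale cover), which you correctly flag as the remaining check.

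The genuine gaps are in part (3), case \caseR. First, you do not prove $d\le 5$; you only assert that it should follow from ``a Riemann--Roch and Reider-type analysis''. In the paper this bound is the content of Proposition~\ref{prop:torsion-bound-normal}, a four-step study of the linear system $|M|=|K_{\wt Y}+\Gamma|$ on the \'etale cover $\wt Y$ of the minimal resolution: one shows $h^0(M)=d-1$, that $|M|$ has no fixed part (using ampleness of $K_X$ and the $G$-action on $H^0(M)$), that it is base-point free for $d\ge 4$, and then combines the degree bound for non-degenerate surfaces in $\pp^{d-2}$ with Castelnuovo's genus bound, eliminating the residual case $d=6$, $\deg h=2$ by an argument about free pencils of genus-$2$ curves. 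None of this is a formal consequence of Riemann--Roch, and omitting it leaves the heart of statement (3) unproved. Second, your claim that cyclicity follows ``by analysing the map $\wt D\to B$'' would fail: $\pi_1(X)\cong\pi_1(B)/a_*\pi_1(\wt D)$ is the quotient of $\IZ^2$ by an index-$d$ sublattice, and such quotients are frequently non-cyclic (e.g.\ $(\IZ/2)^2$ for the multiplication-by-$2$ isogeny), so no analysis of the isogeny alone can rule this out. Given $d\le 5$ the only non-cyclic possibility is $(\IZ/2)^2$, and excluding it requires a separate input --- the paper's Lemma~\ref{lem: no-Z22}, a Reid-style argument using $P_2(X)=2$, the three paracanonical curves $D_i\in|K_X+\eta_i|$ and the Index Theorem. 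Your proposal contains no substitute for this step.
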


 We deal first with surfaces of type (B) and start with two preparatory Lemmas.
 \begin{lem}\label{lem:cover-min}
  Let $X$ be a normal  Godeaux surface of type (B), obtained from a pair $(X_{\min}, D_{\min})$ and let $G$ be a finite group of order $d$. 
 Then the connected   \'etale $G$-covers $Y\to X$  are naturally in one-to-one correspondence with the connected   \'etale $G$-covers $Y_{\min}\to X_{\min}$ such that the preimage of $D_{\min}$ in $X_{\min}$ has $d$ irreducible components. 
 \end{lem}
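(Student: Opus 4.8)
The plan is to carry out the comparison through the common smooth model $\wt X$, using that $\wt X$ blows down to $X_{\min}$ by contracting the $(-1)$-curve $E_P$ over $P$, and to $X$ by contracting the smooth elliptic curve $\wt D$ (the strict transform of $D_{\min}$, which satisfies $\wt D^2=-2$) to the unique elliptic singularity $p$ of $X$ (Proposition \ref{prop: normal-class}). Since blowing up the smooth point $P$ does not change the fundamental group, connected \'etale $G$-covers of $\wt X$ and of $X_{\min}$ are in natural bijection: the cover of $\wt X$ is the blow-up of the cover $Y_{\min}\to X_{\min}$ at the $d$ points lying over $P$. Under this dictionary the preimage of $\wt D$ in the cover coincides with the strict transform of the preimage $\Gamma$ of $D_{\min}$ in $Y_{\min}$, so it has the same number of irreducible components as $\Gamma$. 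Hence it suffices to match connected \'etale $G$-covers $Y\to X$ with connected \'etale $G$-covers $f\colon\wt Y\to\wt X$ for which $f^{-1}(\wt D)$ has exactly $d$ components.

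For the forward direction I would pull back along $\epsilon\colon\wt X\to X$. As \'etale morphisms are stable under base change, $\wt Y:=\wt X\times_X Y\to\wt X$ is an \'etale $G$-cover, and $\wt Y\to Y$ is the minimal resolution of the $d$ singular points of $Y$ lying over $p$; in particular $\wt Y$ is smooth, and it is connected because $Y$ is. Since $\wt D$ is contracted to $p$ and the fibre of $Y\to X$ over $p$ is a $G$-torsor consisting of $d$ points, the preimage $f^{-1}(\wt D)$ is a disjoint union of $d$ copies of $\wt D$. Translating back to $X_{\min}$ gives a connected \'etale $G$-cover $Y_{\min}\to X_{\min}$ whose preimage of $D_{\min}$ has $d$ components.

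The backward direction is where the real work lies. Starting from $Y_{\min}\to X_{\min}$ with $\Gamma$ having $d$ components, I pass to $f\colon\wt Y\to\wt X$ as above; then $f^{-1}(\wt D)$, being the strict transform of $\Gamma$, is a disjoint union of $d$ smooth elliptic curves, each mapping isomorphically onto $\wt D$ (they are the degree-one components of a degree-$d$ \'etale cover of $\wt D$), and hence each of self-intersection $\wt D^2=-2$. I then contract these $d$ disjoint curves to obtain a normal surface $Y$ carrying $d$ simple elliptic singularities together with an induced morphism $Y\to X$. The hard part will be to check that $Y\to X$ is \'etale \emph{over} $p$ and not merely away from it: because each component of $f^{-1}(\wt D)$ maps isomorphically to $\wt D$, the \'etale map $f$ is an isomorphism from a neighbourhood of that component onto a neighbourhood of $\wt D$, and this local isomorphism is compatible with the two contractions, so $Y\to X$ is a local isomorphism at each of the $d$ points over $p$. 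In other words, the numerical condition that $f^{-1}(\wt D)$ have $d$ components is precisely the condition that the cover be unramified over the elliptic singularity.

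Finally I would observe that the $G$-action descends to $Y$ and is free, since it permutes the $d$ exceptional curves, and hence the $d$ singular points of $Y$, regularly, with quotient $X$; thus $Y\to X$ is a connected \'etale $G$-cover. The two constructions both factor through $\wt Y$ and are visibly mutually inverse, which yields the asserted bijection, natural in $G$.
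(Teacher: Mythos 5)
Your argument is correct and follows essentially the same route as the paper: both directions are mediated by the common resolution $\wt X$, with pullback/\'etale base change one way and contraction of the exceptional $(-1)$-curves the other way. The only cosmetic difference is that the paper produces $Y$ as the Stein factorisation of $\wt Y\to X$, which packages your direct contraction of the $d$ elliptic $(-2)$-curves and the subsequent check that $Y\to X$ is \'etale over the elliptic singularity into a single step.
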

 \begin{proof} The correspondence goes as follows: if $Y_{\min}\to X_{\min}$ is  an \'etale cover as in the statement, its pull-back $\wt Y\to \wt X$ is an \'etale $G$-cover whose restriction to $\wt D$ is the trivial cover. If $\wt Y\to Y\to X$ is the Stein factorisation of the induced morphism $\wt Y\to X$ then $Y\to X$ is a connected \'etale $G$-cover.  
 
 The inverse correspondence is described in an analogous way: if $Y\to X$ is an \'etale $G$-cover and  $\wt Y\to \wt X$  is  the induced  \'etale cover of $\wt X$, then the exceptional curve $E$ of the blow up $\wt X \to X_{\min}$ pulls back to a   sum  $E_1+\dots +E_d$ of disjoint $(-1)$-curves of $\wt Y$. Contracting each  $E_i$ to  a point yields an \'etale cover $Y_{\min}\to X_{\min}$.
 \end{proof}

\begin{lem}\label{lem: pi1 contraction}
 Let $f\colon X\to Y$ be a birational morphism of normal surfaces and let $E_1, \dots, E_n$ be the connected components of the exceptional locus with inclusions $\iota_j\colon E_j \into X$. If $\pi_1(X)$ is abelian then \[\pi_1(Y) =\frac{\pi_1(X)}{\langle \im {\iota_j}_*\mid j=1, \dots, n \rangle}\]
\end{lem}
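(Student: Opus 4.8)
The plan is to convert the statement into homology, which is exactly the context in which the hypothesis ``$\pi_1(X)$ abelian'' pays off. Write $y_j:=f(E_j)$, so that $E:=E_1\cup\dots\cup E_n = f^{-1}(\{y_1,\dots,y_n\})$; since $f$ is a proper birational morphism of normal surfaces, Zariski's main theorem gives that each fibre $E_j=f^{-1}(y_j)$ is connected and that $f$ restricts to an isomorphism $X\setminus E\xrightarrow{\ \sim\ }Y\setminus\{y_1,\dots,y_n\}$. First I would extract two consequences of the abelian hypothesis. Comparing the inclusions $X\setminus E\into X$ and $Y\setminus\{y_j\}\into Y$ through $f$ (the former combined with the isomorphism on complements), and using that $\pi_1(Y\setminus\{y_j\})\to\pi_1(Y)$ is onto because filling in the finitely many points $y_j$ can only add relations, one sees that $f_*\colon\pi_1(X)\to\pi_1(Y)$ is surjective. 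As $\pi_1(X)$ is abelian, $\pi_1(Y)$ is abelian too; hence $\pi_1(X)=H_1(X;\IZ)$ and $\pi_1(Y)=H_1(Y;\IZ)$, and the group $\langle\im{\iota_j}_*\mid j\rangle$ is just the (automatically normal) subgroup $\sum_j\im\big(H_1(E_j)\to H_1(X)\big)$.

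It thus suffices to establish the homological identity $H_1(Y)\cong H_1(X)/\sum_j\im\big(H_1(E_j)\to H_1(X)\big)$, with the isomorphism induced by $f$. The main tool is the relative homeomorphism theorem: because $X$ and $Y$ are projective they are triangulable with $E$ and $\{y_j\}$ subcomplexes, so $(X,E)$ and $(Y,\{y_j\})$ are good pairs, and since $f$ is a homeomorphism off $E$ the map of pairs induces isomorphisms $f_*\colon H_*(X,E)\xrightarrow{\ \sim\ }H_*(Y,\{y_j\})$. I would then lay the long exact sequences of the two pairs side by side:
\[
H_1(E)\xrightarrow{\ \iota_*\ }H_1(X)\to H_1(X,E)\xrightarrow{\ \partial_X\ }H_0(E)\to H_0(X),
\]
\[
0\to H_1(Y)\to H_1(Y,\{y_j\})\xrightarrow{\ \partial_Y\ }H_0(\{y_j\})\to H_0(Y),
\]
where $H_1(\{y_j\})=0$, where $H_1(E)=\bigoplus_j H_1(E_j)$ and $H_0(E)=\IZ^n$ since the $E_j$ are the (disjoint) connected components, and where $f_*$ identifies $H_0(E)\cong H_0(\{y_j\})\cong\IZ^n$ by matching $[E_j]$ with $[y_j]$, compatibly with the augmentations onto $H_0(X)\cong H_0(Y)\cong\IZ$. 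By naturality of the connecting maps the square formed by $\partial_X$, $\partial_Y$ and these two isomorphisms commutes, so $\ker\partial_X$ and $\ker\partial_Y$ correspond under $f_*$. Since $H_1(\{y_j\})=0$ gives $H_1(Y)=\ker\partial_Y$, while exactness of the top row gives $H_1(X)/\im\iota_*\cong\ker\partial_X$, the two are canonically identified by $f$, which is the desired identity.

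The main obstacle is not a single computation but the correct assembly of the topological inputs: checking that $f$ is genuinely an isomorphism away from the finitely many points $y_j$ (so the two pairs differ only there), verifying the hypotheses of the relative homeomorphism theorem, and---most crucially---deducing that $\pi_1(Y)$ is abelian. It is this last point that uses the hypothesis in an essential way, for it is precisely what licenses passing from $\pi_1$ to $H_1$ and replacing the normal closure by the ordinary sum of images; with it in hand the comparison of the two exact sequences is routine.
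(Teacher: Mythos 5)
Your proof is correct, but it takes a genuinely different route from the paper's. The paper disposes of the lemma in two lines: it quotes a Seifert--van Kampen-type result ([FPR15b, Cor.~3.2(ii)]) which computes $\pi_1$ after contracting a single connected subvariety as the quotient by the \emph{normal closure} of the image of its fundamental group, and then inducts over the components $E_1,\dots,E_n$; the abelian hypothesis enters only to make the image of $\pi_1(E_j)$ independent of the choice of path to the base point and to identify the normal closure with the subgroup generated. You instead prove the statement from scratch in homology: surjectivity of $f_*$ (via the isomorphism on complements and the fact that deleting finitely many points from a normal surface does not kill surjectivity on $\pi_1$) forces $\pi_1(Y)$ to be abelian, Hurewicz converts everything into $H_1$, and the ladder of long exact sequences of the pairs $(X,E)$ and $(Y,\{y_1,\dots,y_n\})$, linked by the quotient/excision isomorphism $H_*(X,E)\cong H_*(Y,\{y_1,\dots,y_n\})$, gives the identity, with the identification induced by $f_*$ as required. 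All the steps check out: Zariski's main theorem guarantees each $f^{-1}(y_j)$ is connected so components match points bijectively, projective pairs are triangulable hence good, and the continuous bijection $X/E\to Y/\{y_1,\dots,y_n\}$ from a compact space to a Hausdorff one is a homeomorphism. The trade-off is that your argument is self-contained modulo standard algebraic topology, while the paper's is shorter but leans on an external result and never leaves $\pi_1$. Two points you should make explicit rather than leave implicit: properness of $f$ (automatic in the paper, where all surfaces are projective) is needed both for Zariski's main theorem and for the compactness step; and the surjectivity of $\pi_1(Y\setminus\{y_1,\dots,y_n\})\to\pi_1(Y)$, which you justify only with the slogan that filling in points adds relations, deserves a one-line local-cone or general-position argument since $Y$ may be singular at the $y_j$.
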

\begin{proof}
Fix an arbitrary base point in $X$. A priori the image of $\pi_1(E_j)$ in $\pi_1(X)$ is determined only up to conjugation, but in our situation it  is uniquely determined since $\pi_1(X)$ is abelian. The result then  follows by induction from the Seifert--van Kampen-type result proved in \cite[Cor.~3.2\,\refenum{ii}]{FPR15b}. 
\end{proof}

 \begin{prop}\label{prop: pi1 for B}Let $X$ be a normal Godeaux surface of type (B). Then:
 \begin{enumerate}
 \item If $X$ is of type \caseB{1} then $\pi_1(X)$ and $T(X)$ are cyclic of order 4. 
\item  If $X$ is of type \caseB{2} then $\pi_1(X)$ and $T(X)$ are cyclic of order 3. 
\end{enumerate}
\end{prop}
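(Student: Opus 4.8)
The plan is to compute $\pi_1(X)$ directly from the construction in Propositions \ref{prop: normal-class} and \ref{prop:godeaux-normal-B}, and then to read off $T(X)$. Recall that $X$ is obtained from the pair $(X_{\min},D_{\min})$ by blowing up the double point $P\in D_{\min}$ to get $\wt X$ and then contracting the strict transform $\wt D$, which by Proposition \ref{prop: normal-class} is a smooth elliptic curve. Since blowing up a smooth point does not change the fundamental group, $\pi_1(\wt X)=\pi_1(X_{\min})$. The contraction $\epsilon\colon\wt X\to X$ collapses the single connected divisor $\wt D$, so the Seifert--van Kampen result of \cite{FPR15b} underlying Lemma \ref{lem: pi1 contraction} yields \[\pi_1(X)=\pi_1(X_{\min})/N,\] where $N$ is the normal subgroup generated by the image of $\iota_*\colon\pi_1(\wt D)\to\pi_1(\wt X)=\pi_1(X_{\min})$. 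Here I must use the general (non-abelian) form of the statement, with $N$ the normal closure, since $\pi_1(X_{\min})$ will turn out to be non-abelian whereas Lemma \ref{lem: pi1 contraction} is phrased only for abelian $\pi_1$.

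Next I would write $\pi_1(X_{\min})$ explicitly. As in the proof of Proposition \ref{prop:godeaux-normal-B}, $X_{\min}=A/G=\IC^2/\Gamma$ with $\Gamma$ a crystallographic group sitting in $1\to\Lambda\to\Gamma\to G\to1$, where $\Lambda\cong\IZ^4$ is the lattice of $A=E\times E$. I choose lifts $\wt e_1,\wt e_2\in\Gamma$ of the generators $e_1,e_2$ of $G$ from Example \ref{ex:godeaux-bi-elliptic}: $\wt e_1$ is a translation, while $\wt e_2$ is a translation composed with $\mathrm{id}\times(-1)$ in case \caseB{1} and with $\mathrm{id}\times\rho$ in case \caseB{2}. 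The presentation of $\Gamma$ is then recorded by three pieces of data: the powers $\wt e_1^{\,|e_1|},\wt e_2^{\,|e_2|}\in\Lambda$; the commutator $[\wt e_1,\wt e_2]=t_0$, a nonzero translation lying in the second-factor sublattice of $\Lambda$; and the conjugation action of $\wt e_2$ on $\Lambda$, which is the identity on the first-factor sublattice and multiplication by $-1$ (resp.\ $\rho$) on the second.

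The third step identifies $\iota_*\pi_1(\wt D)$ and computes the quotient. By Example \ref{ex:godeaux-bi-elliptic} the curve $\wt D$ is the normalisation $\Delta/\langle e_1\rangle$ of $D_{\min}$, so $\iota_*\pi_1(\wt D)$ is generated by $\wt e_1$ together with the image of the diagonal sublattice of $\Lambda$. Passing to $\Gamma/N$: killing $\wt e_1$ trivialises $\wt e_1^{\,|e_1|}$ and, via $[\wt e_1,\wt e_2]=t_0$, the second-factor translation $t_0$; killing the diagonal sublattice identifies the surviving first- and second-factor lattice generators with one another, and the conjugation relation $\wt e_2\,t\,\wt e_2^{-1}=t^{-1}$ on the remaining second-factor translation then forces what survives of $\Lambda$ to be at most $2$-torsion in case \caseB{1} and to be trivial in case \caseB{2}. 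In both cases the quotient is generated by $\wt e_2$ alone, and computing its order from $\wt e_2^{\,|e_2|}\in\Lambda$ together with these relations gives $\pi_1(X)\cong\IZ/4$ in case \caseB{1} (where $\wt e_2^{\,2}$ is the order-two element) and $\pi_1(X)\cong\IZ/3$ in case \caseB{2}.

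Finally, for a Gorenstein stable surface with $q(X)=0$ the torsion group $T(X)$ is the character group of $\pi_1(X)$ (torsion line bundles corresponding to connected abelian \'etale covers, compatibly with Lemma \ref{lem:cover-min}); since $\pi_1(X)$ is finite cyclic it is self-dual, so $T(X)$ is cyclic of the same order, completing both statements. I expect the main obstacle to be precisely the quotient computation in the non-abelian crystallographic group $\Gamma$: one must track the conjugation action of $\wt e_2$ carefully and verify that, after imposing the relations coming from $N$, it collapses the lattice part of $\Gamma$ down to the cyclic group generated by $\wt e_2$, and then pin down its exact order in each case. A secondary point is to justify using the non-abelian form of the van Kampen contraction from \cite{FPR15b}, since Lemma \ref{lem: pi1 contraction} as stated assumes $\pi_1$ abelian.
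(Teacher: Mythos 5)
Your approach is correct, and the computation does go through, but it is a genuinely different route from the one taken in the paper. The paper never touches the crystallographic group $\pi_1(X_{\min})$ directly: instead it constructs an explicit connected \'etale cover $Y_{\min}\to X_{\min}$ of degree $4$ (via the isogeny $\Phi$ of $E\times E$) resp.\ degree $3$ (via $(E\times E)/\langle e_1\rangle$), checks that $D_{\min}$ pulls back to $4$ resp.\ $3$ smooth elliptic curves, and then shows that the induced cover $Y\to X$ is the universal cover by proving $Y$ simply connected --- which reduces, via Lemma \ref{lem: pi1 contraction} applied on the cover where $\pi_1$ \emph{is} abelian, to checking that the contracted elliptic curves generate $H_1(Y_{\min})$ (for \caseB{2} this is the $\IZ[\zeta]$-lattice computation). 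The payoff of the paper's method is that the abelian hypothesis of Lemma \ref{lem: pi1 contraction} is honestly satisfied at every application, and one gets the universal cover explicitly. Your method quotients $\Gamma=\pi_1(X_{\min})$ by the normal closure of $\iota_*\pi_1(\wt D)=\langle \wt e_1,\ \{(\lambda,\lambda)\}\rangle$ in one step; this is more systematic (no need to guess the right cover in advance) but, as you note, requires the non-abelian form of the contraction statement, since $\Gamma$ is a non-abelian Bieberbach group --- Lemma \ref{lem: pi1 contraction} as stated does not apply, and you must invoke the underlying result of \cite[Cor.~3.2]{FPR15b} with the normal closure, which does hold. Your quotient computation is right: killing $\wt e_1$ kills the commutator $[\wt e_1,\wt e_2]=(0,2t_1)$; killing the diagonal and its $\wt e_2$-conjugate $(\lambda,-\lambda)$ (resp.\ $(\lambda,\zeta\lambda)$) reduces the surviving lattice to $\Lambda_E/(2\Lambda_E+\IZ\, 2t_1)\cong\IZ/2$ in \caseB{1} and to $0$ in \caseB{2}. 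Two small points you should make explicit when writing this up: in \caseB{1} the order-two element $\wt e_2^{\,2}=(2t_2,0)$ survives precisely because $\tau_2\notin\langle\tau_1\rangle$ in $E[2]$, and the lower bound on the order of the quotient comes from the surjection $\Gamma/N\twoheadrightarrow G/\langle e_1\rangle$. The identification of $T(X)$ with the (dual of the) finite cyclic $\pi_1(X)$ is standard given $p_g=q=0$ and is implicit in the paper as well.
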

\begin{proof}
\refenum{i} We will directly construct the universal cover of $X$, starting with a degree $4$ cover of $X_{\min}$ which is trivial on the normalisation of $D_{\min}$, as suggested by Lemma \ref{lem:cover-min}. We continue to use the notation introduced in Example \ref{ex:godeaux-bi-elliptic} and let $E_1:=E/\langle \tau_1\rangle$. Fix in addition an element $\eta_2$ such that $2\eta_2=\tau_2$ and consider the map
\[ \Phi\colon E\times E \to E\times E, \quad (x,y)\mapsto (x+y+\eta_2, x-y+\eta_2)\]
which is a degree $4$ isogeny of $E\times E$ composed with the translation by $(\eta_2, \eta_2)$.

A direct computation shows that $\Phi$ induces the map $\Psi$ in the following diagram
\[
 \begin{tikzcd}
  E\times E \dar{4:1}[swap]{\Phi} \rar{4:1} & E_1\times E_1 \dar{2:1}[swap]{\Psi}\arrow{dr}{\Pi}\\
 E\times E \rar{/e_1}& (E\times E)/\langle e_1\rangle \rar{2:1} & X_{\min}
 \end{tikzcd}.
\]
Now consider the automorphism $\sigma$ of $E\times E$ given by $\sigma( x,  y ) = ( y+\tau_2, x)$. It descends to an automorphism $\bar\sigma$ of $E_1\times E_1$ of order $4$ which satisfies   $\Psi\circ \bar \sigma= \bar e_2\circ \Psi$, where $\bar e_2$ is the induced action on $(E\times E)/\langle e_1\rangle$. Hence $X_{\min}=(E_1\times E_1)/<\bar\sigma>$ and $\Pi$ is an \'etale  $\IZ/4$-cover.

Recall that by construction the pullback $\Gamma$ of $D_{\min}$ to $E\times E$  is the union of $\Delta=\{x-y=0\}$ and of $e_2\Delta=\{x+y+\tau_2=0\}$. Hence  $\Phi^*\Gamma$ is the union for $\tau\in E[2]$ of the four curves $E\times \{\tau\}$  and of the four curves $\{\tau \}\times E$. Thus 
\[\Pi^*D_{\min} =E_1\times \{0\}+E_1\times \{\bar \tau_2\}+\{0\}\times E_1+\{\bar \tau_2\}\times E_1,\]
where $\bar \tau_2$ is the image $\tau_2$ in $E_1$. 

Now let $\tilde Y\to Y_{\min}:=E_1\times E_1$ be the blow up in the four nodes of $\Pi^*D_{\min}$ and $\tilde Y\to Y$ the contraction of the four elliptic curves with self-intersection $-2$. This results in a diagram
\begin{equation}\label{diag: cover and resolution}
\begin{tikzcd}[row sep = small]
  {} & \tilde Y\arrow{dr}{q}\arrow{dl} \dar\\
  Y_{\min}\dar & \tilde X\arrow{dr}\arrow{dl}& Y\dar \\
  X_{\min}& & X
 \end{tikzcd} 
\end{equation}

where all vertical arrows are \'etale  $\IZ/4$-covers.
By Lemma \ref{lem: pi1 contraction} applied to $q$ the surface  $Y$ is simply connected, since $\pi_1(\tilde Y ) = \pi_1(Y_{\min}) = \pi_1(E_1\times\{0\})\times \pi_1(\{0\}\times E_1)$, and thus $Y$ is the universal cover of $X$ and $\pi_1(X) =\IZ/4$.

\refenum{ii}
As in the proof of \refenum{i} we use  the notation of Example  \ref{ex:godeaux-bi-elliptic}. We set $Y_{\min}=(E\times E)/\langle e_1\rangle$ and  let $\Pi\colon Y_{\min}\to X_{\min}$ be the quotient map for the automorphism $\bar e_2$ of $Y_{\min}$  induced  by $e_2$. By construction 
$\Pi^*D_{\min}$ is the orbit of the image of the diagonal under the action of $\bar e_2$ and as such consists of the three components $\bar\Delta$, $\bar e_2 \bar\Delta$, and $2\bar e_2 \bar\Delta$, which intersect pairwise in one node.

Blowing up the nodes of $\Pi^*D_{\min}$ and contracting the elliptic curves we get a diagram as in \eqref{diag: cover and resolution} where now the vertical maps are \'etale  $\IZ/3$-covers. To conclude, it remains to show that $Y$ is simply connected, or equivalently, by Lemma \ref{lem: pi1 contraction}, that $\pi_1(Y_{\min})$ is generated by $\pi_1(\bar\Delta)$, $\pi_1(\bar e_2\bar\Delta)$  and $\pi_1(2\bar e_2\bar\Delta)$. Since all groups involved are abelian, we work in integral homology.

Identifying $ H_1(E\times E) = \IZ[\zeta]^2\subset \IC^2$ we find as subgroups of $\IC^2$:
\begin{align*}
 H_1(Y_{\min}) & = \langle (1,0), (\zeta, 0), (0,1), (0,\zeta), (\tau_1, \tau_1)\rangle_\IZ,\\\
  H_1(\bar \Delta) & = \langle (1,1), (\zeta, \zeta), (\tau_1, \tau_1)\rangle_\IZ,\\
  \intertext{and because $\bar e_2$ acts in homology by multiplication with $\zeta$ in the second variable, $\zeta\tau_1 = \tau_1+\zeta$ and $\zeta^2+\zeta+1=0$,}
  H_1(\bar e_2\bar \Delta) & = \langle (1,\zeta), (\zeta, \zeta^2), (\tau_1, \tau_1+\zeta)\rangle_\IZ,\\
  H_1(2\bar e_2\bar \Delta) & = \langle (1,\zeta^2), (\zeta, 1),
  (\tau_1,\tau_1 -1)\rangle_\IZ.
\end{align*}
Thus the subgroup generated by the three sub-lattices contains the element $(0,1) = (\tau_1, \tau_1)-(\tau_1, \tau_1-1)$ and then we easily find all generators of $H_1(Y_{\min})$, so that $\pi_1(Y)$ is trivial.
\end{proof}

Before we address Godeaux surfaces of type \caseR\ we state  a general result that was proved by Reid in the classical case.
\begin{lem}\label{lem: no-Z22} Let $X$ be a Gorenstein stable Godeaux surface. 
Then $X$ has no \'etale $(\IZ/2)^2$-cover. 
\end{lem}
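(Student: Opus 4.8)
The plan is to rule out an \'etale $(\IZ/2)^2$-cover $Y\to X$ by deriving a numerical contradiction from the invariants such a cover would be forced to have. First I would record the basic numerology: since $X$ is a Gorenstein stable Godeaux surface we have $K_X^2=1$, $\chi(\OO_X)=1$, and the covering is \'etale of degree $4$, so $K_Y^2 = 4K_X^2 = 4$ and $\chi(\OO_Y)=4\chi(\OO_X)=4$. Because $Y\to X$ is a connected \'etale cover of a stable surface it is itself a Gorenstein stable surface, so $K_Y$ is ample (in the appropriate sense) and $Y$ has only canonical or elliptic Gorenstein singularities; in particular $p_g(Y)=\chi(\OO_Y)-1+q(Y)=3+q(Y)$.

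The key step is to analyse the action of the Galois group $\Gamma:=(\IZ/2)^2$ on $H^0(Y,\omega_Y)$ and to compare the character decomposition with what is possible. The covering $Y\to X$ corresponds to a homomorphism $\Gamma^\vee\to T(X)$ into the torsion group, equivalently to three nontrivial $2$-torsion line bundles $\eta_1,\eta_2,\eta_3=\eta_1\otimes\eta_2$ on $X$ with $\eta_i^{\otimes 2}=\OO_X$. Then
\[
 \chi(\OO_Y)=\sum_{\eta}\chi(\OO_X\otimes\eta)=\chi(\OO_X)+\sum_{i=1}^{3}\chi(\OO_X\otimes\eta_i),
\]
where the sum runs over the four characters of $\Gamma$. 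Since $\chi(\OO_Y)=4$ and $\chi(\OO_X)=1$, this forces $\sum_{i=1}^3\chi(\OO_X\otimes\eta_i)=3$, i.e.\ $\chi(\OO_X\otimes\eta_i)=1$ for each $i$ (each summand is at least $1$ by stability-type vanishing, or at least the sum of three terms equalling $3$ pins them down once one shows each is $\geq 1$). The surface $X$ being a numerical Godeaux with $p_g=q=0$ means $h^0(\eta_i)=h^0(K_X+\eta_i)=0$ would give $\chi(\OO_X\otimes\eta_i)=h^1(\eta_i)\geq 0$, and here one needs the refinement that torsion of order exactly $2$ behaves as in the classical Reid argument. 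The plan is to follow Reid's original computation \cite{reid78}: Riemann--Roch on each $\eta_i$ together with the relations $h^0(\eta_i)=0$ (no nonzero section of a nontrivial torsion bundle on a surface with $q=0$) yields $\chi(\OO_X\otimes\eta_i)=0$ or values incompatible with the three of them summing to $3$.

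The main obstacle, and the place where genuine work is required, is establishing the vanishing and base-point results on the possibly non-canonical surface $X$ that make Reid's bicanonical argument go through. In the classical smooth case Reid excludes $(\IZ/2)^2$ by studying the bicanonical map and the decomposition of $H^0(2K_X)$ under $\Gamma$, using that $h^0(2K_X)=2$ and that each nontrivial eigenspace $H^0(2K_X+\eta_i)$ has a controlled dimension; the numerical constraint $\sum$ then forces a section vanishing identically where it cannot. For a Gorenstein \emph{stable} $X$ one must verify the corresponding cohomology vanishing (e.g.\ $H^1(X,-\eta_i)=0$ and $h^0(2K_X\otimes\eta_i)$ values) in the presence of elliptic Gorenstein singularities, invoking the classification of \cite[Thm.~4.1]{FPR15a} and the vanishing theorems for stable surfaces already used in this paper. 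Concretely I expect the argument to reduce to the inequality $K_Y^2=4 < \text{(bound forced by } q(Y),p_g(Y)\text{ and the Bogomolov--Miyaoka--Yau- or Noether-type constraints on a stable surface with these invariants)}$, producing the contradiction; pinning down that the \'etale cover $Y$ cannot carry $p_g(Y)=3$ with $K_Y^2=4$ and the forced $\Gamma$-action is the crux, and will likely use the same quasi-bundle/Albanese analysis as in Lemma \ref{lem:godeaux-normal-1} applied to the minimal resolution of $Y$.
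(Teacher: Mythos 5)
Your proposal does not close the argument, and the numerical contradiction you are hoping for does not exist. For an \'etale $(\IZ/2)^2$-cover $Y\to X$ one gets $K_Y^2=4$ and $\chi(\OO_Y)=4$, which violates neither Noether's inequality nor any Bogomolov--Miyaoka--Yau-type bound, so no contradiction can come from the invariants of $Y$ alone. Moreover the character decomposition you write down is vacuous: each $\eta_i$ is numerically trivial, so Riemann--Roch gives $\chi(\OO_X\otimes\eta_i)=\chi(\OO_X)=1$ for every $i$, and the identity $\chi(\OO_Y)=\sum_\eta\chi(\OO_X\otimes\eta)=4$ is automatically satisfied. Your claim that Riemann--Roch ``yields $\chi(\OO_X\otimes\eta_i)=0$ or values incompatible with the three of them summing to $3$'' is simply false, and with it the proposed route collapses. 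The one correct consequence of this numerology --- namely $h^0(K_X+\eta_i)\geq\chi(K_X+\eta_i)=1$ since $h^2(K_X+\eta_i)=h^0(-\eta_i)=0$ --- is exactly the \emph{starting point} of Reid's argument, not its conclusion.

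What is actually needed, and what the paper does, is the geometric part of Reid's argument, which you only gesture at: take nonzero sections $\sigma_i\in H^0(K_X+\eta_i)$ with zero divisors $D_i$ (irreducible, pairwise meeting transversally in one point because $K_XD_i=D_iD_j=1$ and $K_X$ is ample); observe that $\sigma_1^2,\sigma_2^2,\sigma_3^2$ all lie in $H^0(2K_X)$, which has dimension $P_2(X)=2$, so there is a linear relation $\sum\lambda_i\sigma_i^2=0$; deduce that the three curves $D_i$ pass through a common point, whence $\eta_3|_{D_3}=(D_1-D_2)|_{D_3}=0$; and finally pass to the connected double cover of a desingularisation $\wt X$ determined by $\epsi^*\eta_3$, in which the preimage of $\epsi^*D_3$ splits into two components of self-intersection $1$, contradicting the Hodge Index Theorem. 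None of these steps appears in your write-up, and the final contradiction comes from the Index Theorem applied to a curve on a cover, not from a bound on $K_Y^2$ in terms of $p_g(Y)$ and $q(Y)$. As written, the proposal identifies the correct reference but does not contain a proof.
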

\begin{proof}
 We argue by contradiction, following the  argument  used in \cite{reid78}.    Assume $p\colon Y\to X$ is an \'etale $(\IZ/2)^2$-cover and let $\{0, \eta_1,\eta_2,\eta_3\}$ be the kernel of $p^*\colon \Pic(X)\to\Pic(Y)$.  Since $h^2( K_X+\eta_i)=h^0(\eta_i)=0$, $h^0(K_X+\eta_i)\ge \chi(K_X)=1$ and we may find non-zero sections $\sigma_i\in H^0(K_X+\eta_i)$, $i=1,2,3$.
  The curves $D_i$ defined by the $\sigma_i$ are irreducible and meet each other  transversally at one point, since $D_iD_j=K_XD_i=1$ and $K_X$ is ample. We have $\sigma_i^2\in H^0(2K_X)$, for $i=1,2,3$. 
Since $P_2(X)=2$ by Riemann--Roch  \cite[Thm. 3.1]{liu-rollenske16}
we have a relation $\sum \lambda_i \sigma_i^2=0$. By this relation, the $D_i$ all have a common point $P$, which is smooth for each of them.
It follows that $\eta_3|_{D_3}=(\eta_1-\eta_2)|_{D_3}=(D_1-D_2)|_{D_3}=0$. Consider a desingularisation $\epsilon\colon \wt X \to X$ and let $Z \to \wt X$ be  the (connected!) double cover   given by  $\epsi^*\eta_3$: then the preimage in $Z$ of $\epsi^*D_3$ has two  connected components with self-intersection equal to 1, contradicting the Index Theorem.
\end{proof}

\begin{lem} \label{lem:pi1-R}
Let $X$ be a normal Godeaux surface of type \caseR. Then $\pi_1(X)$ is a finite abelian group of order $d\ge 3$.
\end{lem}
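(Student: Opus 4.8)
The plan is to compute $\pi_1(X)$ by transporting the (abelian) fundamental group of the resolution $\wt X$ through the contraction $\epsilon$ via Lemma \ref{lem: pi1 contraction}, and then to read off its order from the geometry of the ruling.

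First I would record, from Proposition \ref{prop: normal-class} and Lemma \ref{lem:godeaux-normal-1}, that in case \caseR\ the resolution $\wt X$ is a blow-up of a minimal ruled surface over an elliptic curve $B$ (its Albanese), and that the exceptional divisor $D$ of $\epsilon$ is a smooth elliptic curve with $\epsilon^*K_X=K_{\wt X}+D$. Since a $\pp^1$-bundle over $B$ has fundamental group $\pi_1(B)\cong\IZ^2$ and blowing up points leaves $\pi_1$ unchanged, the group $\pi_1(\wt X)\cong\IZ^2$ is abelian. Hence Lemma \ref{lem: pi1 contraction} applies to the birational morphism $\epsilon$ (whose exceptional locus is the single connected curve $D$) and yields $\pi_1(X)=\pi_1(\wt X)/\langle\im(\pi_1(D)\to\pi_1(\wt X))\rangle$.

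Next I would identify this quotient. Let $\rho\colon\wt X\to B$ be the ruling and $F$ a general fibre, so $F\cong\pp^1$; the restriction $\rho|_D\colon D\to B$ is non-constant, since an elliptic curve cannot lie in a fibre of $\rho$ (the fibres are trees of rational curves). Put $d:=\deg(\rho|_D)=D\cdot F$. As $\rho$ induces an isomorphism $\rho_*\colon\pi_1(\wt X)\xrightarrow{\sim}\pi_1(B)$, it carries $\im(\pi_1(D))$ onto $\im\bigl((\rho|_D)_*\colon\pi_1(D)\to\pi_1(B)\bigr)$; and because $\rho|_D$ is, after composition with a translation, an isogeny of degree $d$, this image has index $d$ in $\pi_1(B)\cong\IZ^2$. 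Therefore $\pi_1(X)\cong\pi_1(B)/\im((\rho|_D)_*)$ is a finite abelian group of order $d$. To obtain $d\ge 3$ I would then use that $K_X$ is ample, $X$ being stable: since $F^2=0$, adjunction gives $K_{\wt X}\cdot F=-2$, whence $\epsilon^*K_X\cdot F=(K_{\wt X}+D)\cdot F=d-2$. As $F$ is not $\epsilon$-exceptional and $K_X$ is ample, $\epsilon^*K_X\cdot F=K_X\cdot\epsilon_*F>0$, so $d>2$, i.e. $d\ge 3$.

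The delicate point is the middle step. One must genuinely verify that $\pi_1(\wt X)$ is abelian, both so that Lemma \ref{lem: pi1 contraction} applies and so that $\im(\pi_1(D))$ is a well-defined subgroup rather than merely a conjugacy class, and one must check that the index of this image is exactly the covering degree $d=D\cdot F$; the whole identification of the order of $\pi_1(X)$ rests on this. Once it is in place, the inequality $d\ge 3$ is a short consequence of the ampleness of $K_X$, and the finiteness and abelianness are already guaranteed.
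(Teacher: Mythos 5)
Your proposal is correct and follows essentially the same route as the paper: apply Lemma \ref{lem: pi1 contraction} to the contraction $\epsilon$, identify $\pi_1(X)$ with $\pi_1(B)/a_*\pi_1(\wt D)$ where $a\colon\wt D\to B$ is the degree-$d$ isogeny induced by the Albanese map (which is the ruling), and deduce $d\ge 3$ from $0<(K_{\wt X}+\wt D)\cdot F=d-2$ by ampleness of $K_X$. Your explicit verification that $\pi_1(\wt X)\cong\IZ^2$ is abelian is a point the paper leaves implicit, and is a welcome addition since Lemma \ref{lem: pi1 contraction} requires it.
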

\begin{proof}
As usual, let $\epsilon \colon \wt X\to X$ be the minimal resolution. Denote by $p\colon \tilde X\to B$ the Albanese map, let $F$ be a general fibre of $p$  and let $d=F\wt D$, so that   induced map  $a\colon\wt D\to B$ is a degree $d$ isogeny.
Then by Lemma \ref{lem: pi1 contraction} we have 
\[\pi_1(X) \isom \frac{\pi_1(\tilde X)}{\pi_1(\tilde D)}\isom \frac {\pi_1(B)}{a_*\pi_1(\tilde D)},\]
which is a finite abelian group of order $d$. 

Since $K_{\tilde X}F = -2$ and $K_{\tilde X} + \tilde D$ is ample on $F$,  we have
 $0<F(K_{\wt X}+\wt D)=d-2$, and we  get $d\ge 3$.
\end{proof}

\begin{prop}\label{prop:torsion-bound-normal}
Let $X$ be a normal Godeaux surface of type \caseR. Then $\pi_1(X)$  is cyclic of order  $d\le 5$.
\end{prop}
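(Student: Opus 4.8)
The plan is to bound the order $d=|\pi_1(X)|$ by passing to the universal cover and applying a Noether-type inequality, and then to deduce cyclicity by eliminating the only non-cyclic group that survives the bound. By Lemma \ref{lem:pi1-R} we already know that $\pi_1(X)$ is finite abelian of order $d\ge 3$, so there is a connected \'etale Galois cover $p\colon Y\to X$ of degree $d$ with $Y$ simply connected. Since $p$ is finite \'etale and $X$ is Gorenstein stable, $Y$ is again Gorenstein stable, with $\chi(\OO_Y)=d\chi(\OO_X)=d$ and $K_Y^2=dK_X^2=d$.

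First I would describe $Y$ concretely in the spirit of Lemma \ref{lem:cover-min}. Writing $p\colon\wt X\to B$ for the Albanese map and $a\colon\wt D\to B$ for the degree-$d$ isogeny of Lemma \ref{lem:pi1-R}, the identification $\pi_1(X)=\pi_1(B)/a_*\pi_1(\wt D)$ from Lemma \ref{lem: pi1 contraction} shows that the minimal resolution of $Y$ is the fibre product $\wt Y=\wt X\times_B\wt D$, a surface ruled over the elliptic curve $\wt D$. The preimage of $\wt D$ is $\wt D\times_B\wt D$, which splits as the $d$ disjoint graphs of the translations by $\ker a$, that is, as $d$ disjoint smooth elliptic curves $S_1,\dots,S_d$, each a \emph{section} of $\wt Y\to\wt D$ and each contracted by $\wt Y\to Y$ to one of the $d$ elliptic singularities of $Y$. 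Because each $S_i$ is a section of the ruling, the restriction map $H^1(\OO_{\wt Y})\to\bigoplus_i H^1(\OO_{S_i})$ is injective; feeding this into the Leray sequence for $\wt Y\to Y$ (exactly as in the computation behind Lemma \ref{lem:godeaux-normal-1}) gives $q(Y)=h^1(\OO_Y)=0$, and hence $p_g(Y)=\chi(\OO_Y)-1=d-1$.

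Next I would apply the Noether inequality $K_Y^2\ge 2p_g(Y)-4$, valid for Gorenstein stable surfaces, to $Y$. With $K_Y^2=d$ and $p_g(Y)=d-1$ this reads $d\ge 2(d-1)-4$, i.e.\ $d\le 6$. Among abelian groups of order at most $6$ the only non-cyclic one is $(\IZ/2)^2$, which is excluded by Lemma \ref{lem: no-Z22}; hence $\pi_1(X)$ is cyclic of order $d\le 6$, and it remains only to rule out $d=6$.

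The exclusion of $d=6$ is where I expect the main obstacle to lie, since there $Y$ attains equality in Noether's inequality ($p_g(Y)=5$, $K_Y^2=6$), so one must work at the boundary of the geography. Two routes seem natural: either invoke the classification of Gorenstein stable surfaces on the Noether line and check that none of them carries a free $\IZ/6$-action with Godeaux quotient; or analyse the ruled surface $\wt Y$ directly, bounding the number of \emph{disjoint} elliptic $(-e)$-sections $S_i$ it can carry while keeping $K_{\wt Y}+\sum_i S_i$ nef (the ampleness of $K_Y$), a constraint that already forces $e\ge 2$ and should tighten to $d\le 5$. Either way the conclusion is $d\le 5$, and together with cyclicity this proves the proposition. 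I note finally that if one has at hand the sharper inequality $K_Y^2\ge 2p_g(Y)-3$ for non-canonical Gorenstein stable surfaces, then $d\le 5$ and the exclusion of $d=6$ drop out simultaneously, bypassing this last step.
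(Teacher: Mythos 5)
Your reduction to $d\le 6$ is correct and genuinely different from the paper's: you pass to the universal cover $Y$, compute $\chi(\OO_Y)=K_Y^2=d$ and $q(Y)=0$ (the Leray argument with the sections $S_i$ is sound, and is essentially the paper's Step~1, which gets $h^0(K_{\wt Y}+\Gamma)=d-1$ the same way), and then invoke the Noether inequality for Gorenstein stable surfaces. That is a clean shortcut past the paper's Castelnuovo-bound argument for $d\ge 7$, at the price of citing the stable Noether inequality as a black box (a non-trivial theorem which you should source explicitly; the paper avoids it entirely). The cyclicity claim is also fine: the only non-cyclic abelian group of order $\le 6$ is $(\IZ/2)^2$, killed by Lemma~\ref{lem: no-Z22}.

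The genuine gap is the exclusion of $d=6$, which you explicitly leave open, and which is precisely where the paper's proof does almost all of its work: after showing $|M|=|K_{\wt Y}+\Gamma|$ has no fixed part (Step~2) and no base points (Step~3), it analyses the morphism $h\colon\wt Y\to\pp^4$, and the hard subcase $\deg h=2$ onto a cubic scroll is only dispatched by producing a $G$-invariant free pencil of genus-$2$ curves and invoking the impossibility of a free order-$6$ action on such a pencil. Neither of your two proposed routes closes this: the Noether-line classification for Gorenstein stable surfaces is itself a substantial result and you would still have to rule out a free $\IZ/6$-action on each surface in that list; and the ``direct analysis of the ruled surface'' is less innocent than it looks, because two disjoint sections of a relatively minimal ruled surface over an elliptic curve cannot both have negative self-intersection, so $\wt Y\to\wt D$ necessarily has reducible fibres and the configuration of the six $(-2)$-sections is not controlled by a one-line nefness computation. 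Finally, the hoped-for sharper inequality $K_Y^2\ge 2p_g(Y)-3$ for non-canonical Gorenstein stable surfaces is not available (the Noether line $K^2=2p_g-4$ is attained by non-canonical Gorenstein stable surfaces), so that escape hatch does not exist. As it stands the proposal proves $\pi_1(X)$ cyclic of order $d\le 6$, not $d\le 5$.
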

\begin{rem}
An example of type \caseR\  with $T(X)$ of order $5$ appears  in   \cite[Ex.~2.14]{Lee00a}. It is constructed by specializing the general construction of  Godeaux surfaces with torsion $\IZ/5$, hence it is smoothable.  
An example with $T(X)$ of order $4$ appears in \cite[Example $(1)$ in \S 6.3]{MLP15} . Again, this is constructed as a degeneration of smooth Godeaux surfaces with $\IZ/4$ torsion.
\end{rem}

\begin{proof} 
 By Lemma \ref{lem:pi1-R} the group $\pi_1(X)$  is finite abelian  of order $d$ and by Lemma \ref{lem: no-Z22} it is sufficient to prove that $d\leq 5$. Consider the connected \'etale cover $Y\to X$  with Galois group $G=\pi_1(X)$; let $f\colon \wt Y\to \wt X$  be the induced cover and set $\Gamma:=f^*\wt D$ and  $ M:=f^*(K_{\wt X}+\wt D)=K_{\wt Y}+\Gamma$. In fact, we have seen in the proof of Lemma \ref{lem:pi1-R} that $\tilde Y = Y\times_B \tilde D$ where $\tilde D\to B$ is induced by the Albanese map. Thus $\Gamma=\Gamma_1+\dots+\Gamma_d$, where the $\Gamma_i$ are smooth disjoint sections of the Albanese map with $\Gamma_i^2=-2$. 
  Recall also that $M\wt F=d-2>0$.

For the reader's convenience we break  the proof  into  steps:
\begin{description}
\item[Step 1] \textit{$h^0(M)=d-1$ and  the curves $\Gamma_i$ are  not contained in the fixed part of $|M|$.}\par
For $i=1,\dots, d$, consider the adjunction sequence:
$$0\to K_{\wt Y}\to K_{\wt Y}+\Gamma_i\to K_{\Gamma_i}=\OO_{\Gamma_i}\to 0.$$
The coboundary map $H^0(K_{\Gamma_i})\to H^1(K_{\wt Y})$ is dual to $H^1(\OO_{\wt Y})\to H^1(\OO_{\Gamma_i})$, hence it is an isomorphism, since $\Gamma_i$ is a section of the Albanese map.
The claim now follows from the long exact sequence in cohomology of 
\[0\to K_{\wt Y}\to M=K_{\wt Y}+\Gamma \to \oplus_{i=0}^d K_{\Gamma_i}\to 0.\]

\item[Step 2] \textit{$|M|$ has no fixed part.}\par
Assume by contradiction  that $|M|=|P|+Z$, with $Z>0$. The group  $G$ acts on $|M|$ by construction, hence $Z$ is a $G$-invariant divisor. In addition, since    $G$ is abelian,  $H^0(M)$ splits as a direct sum of eigenspaces under  the $G$-action, and so there exist a $G$-invariant divisor $M_0=P_0+Z\in |M|$. It follows that $P_0$ is also invariant;   the image $C_0$   of $M_0$ in $X$ is a curve in $|K_X+\eta|$ for some $\eta\in T(X)$.  Since the only curves contracted by the map $\wt Y\to \wt X\to X$ are the $\Gamma_i$,  no component of $Z$ is  contracted by this map because of Step 1 and not every component of $P_0$ is contracted because $P_0$ moves  linearly. So, $C_0$ is reducible with  $K_XC_0=1$, a contradiction since $K_X$ is ample.

\item[Step 3] \textit{If $d\ge 4$, then $|M|$ has no base points.}\par 
The base locus of $|M|$ is finite by Step 2. Assume by  contradiction  that $|M|$ has a base  point $P$. Then, because $G$ acts on $|M|$, all the $d$ points in the  orbit of $P$  are base points of $|M|$. Since $M^2=d$ and $|M|$ moves, we conclude that $|M|$ has precisely $d$ simple base points, and so it  is a pencil, namely $d-1=2$ by Step 1.

\item[Step 4] {$d\le 5$}\par
Assume by  contradiction $d\ge 6$. 
Consider the morphism $h\colon \wt Y\to \pp^{d-2}$ given by $|M|$ and denote by $\Sigma$ the image of $h$, which is a surface because $|M|$ has no base points by Step 3 and $M^2>0$. Since a non-degenerate irreducible surface in $\pp^k$ has degree $\ge k-1$,  we have:
$$d=M^2\ge \deg \Sigma \deg h \ge  (d-3)\deg h,$$
 hence either $\deg h=1$, or  $d=6$ and $\deg h=2$.  
 
Assume $\deg h=1$. We have $MK_{\wt Y}=M^2-M\Gamma=d-0=d$, hence by the adjunction formula the general $M\in |M|$ is smooth of genus $d+1$.  The morphism $h$ maps $M$ birationally to a curve of degree $d$ in $\pp^{d-3}$. So Castelnuovo's bound \cite[III, \S2]{ACGH} gives
$g(M)\le 4$ if $d=6$ and  $g(M)\le 3$ if $d\ge 7$, so we have reached a contradiction.

Finally assume $\deg h=2$ and $d=6$. In this case $\Sigma$ is either a rational normal cubic surface or the cone over the twisted cubic (see e.\,g.\ \cite{eisenbud-harris87}), and therefore it has a unique ruling by lines. Denote by $|\Phi|$ the moving part of  the  pullback to $\wt Y$ of this  ruling:  it  is a $G$-invariant pencil and it is not composed with the Albanese map. Indeed, if  $F$  is a fibre of the Albanese map of $\wt Y$, then $FM=d-2=4$, hence $F$ is mapped to a curve of degree $\ge 2$. 

The divisor $M-2\Phi$ is effective, because there is a hyperplane section of $\Sigma$ that contains two of the rulings. Hence $6=M^2\ge 2M\Phi=2K_{\wt Y}\Phi$, because $\Gamma$ is contracted by $h$; thus $M\Phi=K_{\wt Y}\Phi\le 3 $. By the Index Theorem  $\Phi^2\le \frac{(M \Phi)^2}{M^2}<2$. If $\Phi^2=1$, then $|\Phi|$ has one simple base point, that must be fixed by $G$, since $|\Phi|$ is $G$-invariant, contradicting the fact that $G$ acts freely. So  $\Phi^2=0$ and $|\Phi|$ is a free pencil. 
By  parity $ M\Phi=K_{\wt Y}\Phi=2$, and so  $|\Phi|$ is a free pencil of genus $2$ curves stable under a free action of a group of order 6, but this is impossible  \cite[Lem.~2.2]{CMLP07}. 
 \end{description}
\end{proof}

\section{Non-normal stable Godeaux surfaces}
\subsection{Normalisation and glueing: starting point of the classification}\label{ssec: glue}
We will now show how a combination of the results of \cite{FPR15a} and Koll\'ar's glueing principle leads to a classification of non-normal Gorenstein numerical Godeaux surfaces.

Let $X$ be a stable surface and $\pi\colon \bar X\to X$ its normalisation. Recall that the non-normal locus $D\subset X$ and its preimage $\bar D\subset \bar X$ are pure of codimension $1$, i.\,e. curves. Since $X$ has ordinary double points at the generic points of $D$ the map on normalisations $\bar D^\nu\to D^\nu$ is the quotient by an involution $\tau$. 
Koll\'ar's glueing principle says that $X$ can by be uniquely reconstructed from $(\bar X, \bar D, \tau\colon \bar D^\nu\to \bar D^\nu)$ via the following two push-out squares:
\begin{equation}\label{diagr: pushout}
\begin{tikzcd}
    \bar X \dar{\pi}\rar[hookleftarrow]{\bar\iota} & \bar D\dar{\pi} & \bar D^\nu \lar[swap]{\bar\nu}\dar{/\tau}
    \\
X\rar[hookleftarrow]{\iota} &D &D^\nu\lar[swap]{\nu}
    \end{tikzcd}
\end{equation}

Applying this principle to non-normal Gorenstein stable Godeaux surfaces, we deduce by \cite[Thm.~5.13]{KollarSMMP} and \cite[Addendum in Sect.3.1.2]{FPR15a} that a triple $(\bar X, \bar D, \tau)$ corresponds to a Gorenstein stable Godeaux surface if and only if the following four conditions are satisfied:
\begin{description}
\item[lc pair condition] $(\bar X, \bar D)$ is an lc pair, such that $K_{\bar X}+\bar D$ is an ample Cartier divisor.
\item[$K_X^2$-condition] $(K_{\bar X}+\bar D)^2=1$.
 \item[Gorenstein-glueing condition] $\tau\colon \bar D^\nu\to \bar D^\nu$ is an involution that restricts to a fixed-point free involution on the preimages of nodes.
 \item[Godeaux condition] The holomorphic Euler-characteristic of the non-normal locus $D$ is  $\chi(D) = 1-\chi(\bar X)+\chi(\bar D)$.
\end{description}
In \cite{FPR15a} we classified  lc pairs $(\bar X, \bar D)$ (with $\bar D\neq0$) satisfying the first and second condition, and also showed (Thm.~3.6 loc.\ cit.) that an involution $\tau$ such that $(\bar X, \bar D, \tau)$  satisfies all four conditions can only exist in three cases, labelled \casedP, \caseP{}, and \caseE{+}.

We will now classify the possible involutions $\tau$ for \caseP{} and \caseE{+}, and investigate the geometry and topology of the resulting Gorenstein stable Godeaux surfaces; the case \casedP\ has been treated in \cite{rollenske16} and we recall the results in Section \ref{sect: case dP}.

\subsection{Case \caseP{}.}
\label{sect: case P}
In this case $\bar X = \IP^2$ and $\bar D$ can be any nodal plane quartic, so $p_a(\bar D)=3$ and the Godeaux condition is satisfied if and only if $p_a(D)=3$. 

We  fix some notation first  and then we write  down three constructions, each depending on one parameter.

Fix $Q_1,\dots, Q_4\in\pp^2$ points in general position and denote by $\kc$ the pencil of conics through these points.  Denote by $L_{ij}$ the line through $Q_i$ and $Q_j$;  the conics $L_{12}+L_{34}$, $L_{13}+L_{24}$ and $L_{14}+L_{32}$ are the only reducible conics of $\kc$.  Any permutation $\gamma\in S_4$ of $Q_1, \dots Q_4$ determines an  automorphism of $\pp^2$ that in turn induces an automorphism of $\kc$;   we still denote all these automorphism by the same letter.  The action of $S_4$ on $\kc$  is not faithful: indeed a permutation induces the identity on $\kc$ if and only if it fixes the three reducible conics. So the kernel of the map $S_4\to \Aut(\kc)$ is the subgroup $H=\{\id, (12)(34), (13)(24), (14)(32)\}$ and therefore the image of $S_4$ is  a subgroup of $\Aut(\kc)$ isomorphic to $S_3$. 

\begin{rem}\label{rem: birapporto}
If $C\in \kc$ is a smooth member, then  the cross-ratio $\beta_C\in \IC-\{0,1\}$ of the points $Q_1,Q_2,Q_3,Q_4$ is well defined. It is a classical fact (cf. for instance \cite[Es.~4.24]{FFP11}) that $\beta_C$ determines $C\in\kc$. 
We denote by $j(C):=\frac{(\beta_C^2-\beta_C+1)^3}{\beta_C^2(\beta_C-1)^2}$ the $j$-invariant of the unordered quadruple $\{Q_1,\dots, Q_4\}$ on  $C\cong \pp^1$.  So we have a rational function $j\colon \kc\to \pp^1$ such that the restriction of $j$ to the set of irreducible elements of $\kc$ is the quotient map  for the $S_4$-action.  

From this one can easily deduce that different general choices of parameter in the constructions below give non-isomorphic surfaces.
\end{rem}

\begin{description}
 \item[Case \caseP{1}] Let $\sigma=(1234)\in S_4$. The permutation $\sigma$ acts on $\kc$ as an involution that fixes $L_{13}+L_{24}$ and a smooth conic $C_0$. Given $C\in \kc$ such that $C\ne \sigma(C)$, we set $\bar D:=C+\sigma(C)$ and we take as $\tau$ the involution of ${\bar D}^{\nu}=C\sqcup \sigma(C)$ that identifies $C$ with $\sigma(C)$ via the restriction of $\sigma$. Clearly $\tau$ satisfies the Gorenstein condition, and the stable surface thus obtained has a degenerate cusp at the image point of $Q_1,\dots, Q_4$. If $C$ is irreducible, then  the double curve  $D$ is a rational curve with a unique (semi-normal) quadruple point, hence $p_a(D)=3$ and the Godeaux condition is satisfied. We give a graphical representation in Figure \ref{fig: P.1 construction}.

If $C=L_{12}+L_{34}$ 
is reducible, the double locus splits in two components, the quadruple point persists and  an additional   node appears, so we have $p_a(D)=3$ also in this case.  In the classification of surfaces with normalisation $(\IP^2, \text{4 lines})$ (see \cite{FPR15b}, Table~1) such a configuration occurs only once, in the case $X_{1.4}$.
 
 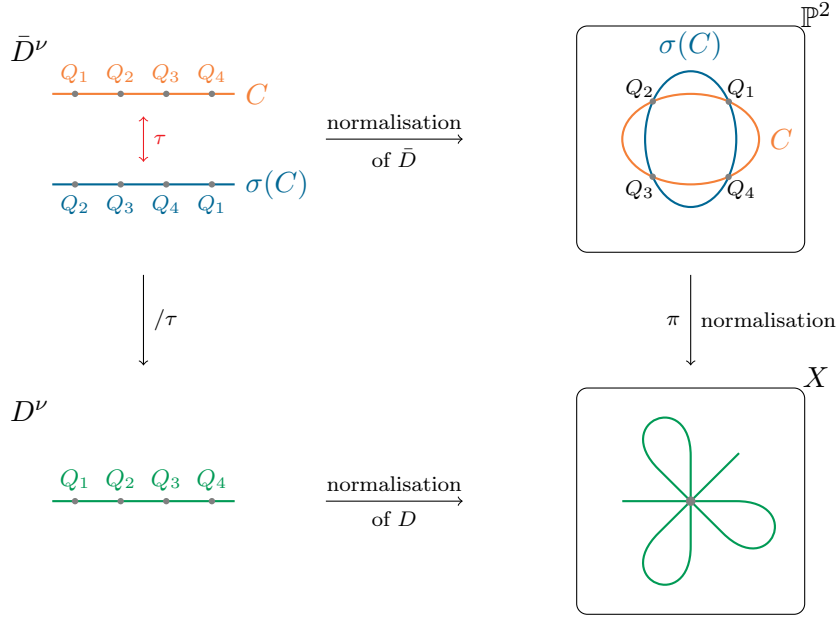
\begin{figure}

 \begin{tikzpicture}
[pfeil/.style = {->, every node/.style = { font = \scriptsize}},
scale = .6]

\begin{scope}[xshift = 12cm, yshift = 8cm]
\node at (2.75, 2.75) {$\IP^2 $};

\draw[rounded corners] (-2.5, 2.5) rectangle (2.5, -2.5);
\draw[C1col, name path = C1] (0,0) ellipse (1 and 1.5 ) ++(0,1.5) node[above] {$\sigma(C)$};
\draw[C2col, name path = C2] (0,0) ellipse (1.5 and 1 ) ++(1.5,0) node[right] {$C$};;

\fill [name intersections={of= C1 and C2, name=Q}]
[Qs]
\foreach \s in {1,...,4}
{(Q-\s) circle (2pt) ++(-45+\s*90:.4) node  { $Q_\s$}};

\draw[pfeil] (0, -3) to  node[left] {$\pi$} node[right] { normalisation} ++(0,-2);
\end{scope}

\begin{scope}[xshift = 0cm, yshift = 8cm]
\node at (-2.5,2) {$\bar D^\nu$};
\draw[ pfeil,<->, Red] (0,.5) to node[right]{$\tau$} (0,-.5);
\draw[C1col] (-2, -1)-- (2,-1) node[right]{$\sigma(C)$};
\draw[C2col] (-2, 1)-- (2,1) node[right]{$C$};

\fill [Qs]
\foreach \s in {1,...,4}
{(-2.5+\s,1) circle (2pt) node[C2col, above]  { $Q_\s$}};

\fill [Qs]
\foreach \s in {2,...,4}
{(-2.5+\s-1,-1) circle (2pt) node[C1col, below]  { $Q_\s$}}
(1.5,-1) circle (2pt) node[C1col, below]  { $Q_1$};

\draw[pfeil] (4,0) to  node[below] { of $\bar D$} node[above] { normalisation} ++(3,0);
\draw[pfeil] (0, -3) to  node[left] {} node[right] { $/\tau$} ++(0,-2);
\end{scope}

\begin{scope} 
\node at (-2.5,2) {$ D^\nu$};
\draw[C12col] (-2, 0)-- (2,0) node[right]{
};
\fill [Qs]
\foreach \s in {1,...,4}
{(-2.5+\s,0) circle (2pt) node[C12col, above]  {\footnotesize $Q_\s$}};
\draw[pfeil] (4,0) to  node[below] { of $ D$} node[above] { normalisation} ++(3,0);
\end{scope}

\begin{scope}
[xshift = 12cm, yshift =0cm,  looseness=5]
\node at (2.75, 2.75) {$X$};
\draw[rounded corners] (-2.5, 2.5) rectangle (2.5, -2.5);
 \path (0,0) coordinate(P) ;
 
\draw[C12col]
\foreach \x in {0,270, 135} {(P)  -- ++(\x:1) to[out=\x, in =\x-45]  (\x-45:1) -- (P)}
\foreach \x in {180, 45} {(P)  -- ++(\x:1.5)};
 \fill [Qs] (P) circle (3pt);
\end{scope}
\end{tikzpicture}
\caption{The general surface in \caseP{1}}\label{fig: P.1 construction}
\end{figure}

\item[Case \caseP{2}]  Set  $\rho=\sigma^2=(13)(24)\in S_4$. The action of $\rho$ on $\kc$ is trivial, namely $\rho$ preserves the conics of $\kc$.
We take $\bar D=C+L_{13}+L_{24}$, with  $C\in \kc$ distinct from $L_{13}+L_{24}$. The involution $\tau$ on ${\bar D}^{\nu}=C\sqcup L_{13}\sqcup L_{24}$ is defined on $C$ as the restriction of $\rho$;  in addition, $\tau$ switches   $L_{13}$ with $L_{24}$, identifying them via the only isomorphism such that:
\[R\mapsto Q_2, \quad Q_1\mapsto Q_4, \quad Q_3\mapsto R',\]
where $R$  denotes the point $L_{13}\cap L_{24}$   if we consider it on $L_{13}$,  and   $R'$ denotes the same point if we consider it on $L_{24}$; the involution $\tau$ satisfies the Gorenstein condition.

If $C$ is irreducible, then the double locus $D$ of the corresponding stable surface  has two components; the image of $C$, which is a nodal rational curve, and the image of $L_{13}$ and $L_{24}$, which is a rational curve with a unique (semi-normal) triple point; the components meet at the singular points,  which combine to a semi-normal quintuple point. In particular, $p_a(D)=3$ and $X$ is  a Godeaux surface.

When $C$ is reducible, then  the double curve remains the same but the surface develops an additional degenerate cusp on the nodal components, which locally is isomorphic to a cone over a nodal plane cubic, and we have a Godeaux surface also in this case.
More precisely, if $C=L_{14}+L_{23}$, then the Gorenstein condition is still verified and one can check that the resulting Godeaux surface is case $X_{1.1}$ in Table~1 of \cite{FPR15b}, while if $C=L_{12}+L_{34}$ then we get case $X_{1.2}$.

\item[Case \caseP{3}] We take $\bar D$ as in case \caseP{2}, but we choose a different involution: we let $\tau$ be the involution that acts  on $L_{13}\sqcup L_{24}$ as in case \caseP{2} and on  $C$ via the permutation $(12)(34)$. The difference to the previous family is that here the involution on $C$ does not preserve the intersection $L_{13}\cap C$. 
When $C$ is smooth the non-normal locus looks like  in case \caseP{2}, but the way it is glued into the surfaces, encoded by the involution,  is different.

 If $C=L_{14}+L_{23}$, then the Gorenstein condition is satisfied and we get  case $X_{1.3}$   in Table~1 of \cite{FPR15b}. 
 
 If $C=L_{12}+L_{34}$, then the Gorenstein condition is not satisfied, because $\tau$ fixes the preimages in $L_{12}\sqcup L_{34}$ of the singular point of $C$. The non-normal locus has $3$ components
 and at the newly developed node $X$ has a non-Gorenstein singularity of index $2$. 
\end{description}

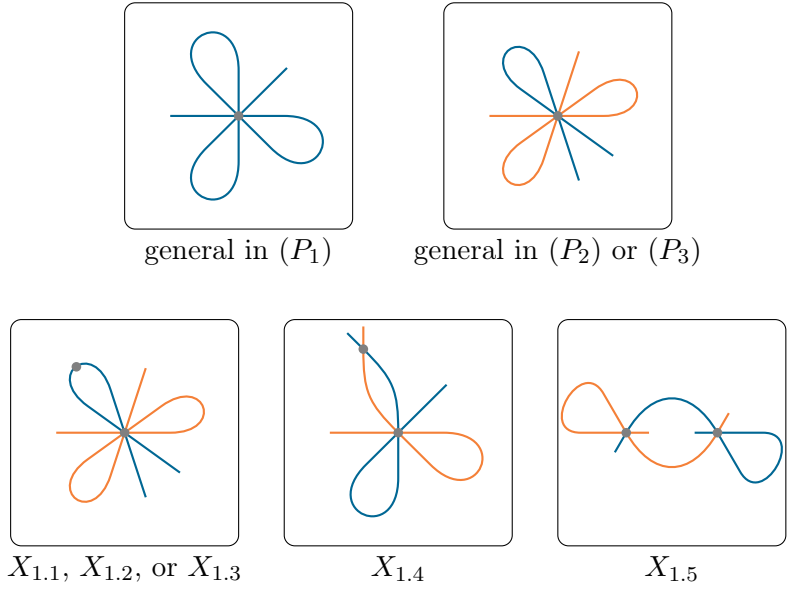
\begin{figure}
 \begin{tikzpicture}
[C2/.style ={thick, Orange},
C1/.style={thick, MidnightBlue},
Qs/.style ={black!50!white, opacity=1, every node/.style={black, opacity=1, font  = \scriptsize}},
scale = .6]

\begin{scope}
[xshift = 3.5cm, yshift =7cm,  looseness=5]
\node at (0,-3) {general in \caseP{2} or \caseP{3}};
\draw[rounded corners] (-2.5, 2.5) rectangle (2.5, -2.5);
 \path (0,0) coordinate(P) ;
 
\draw[C2]
\foreach \x in {0, 6*36} {(P)  -- ++(\x:1) to[out=\x, in =\x+36]  (\x+36:1) -- (P)}
\foreach \x in {180, 72} {(P)  -- ++(\x:1.5)};
\draw[C1]
\foreach \x in {3*36} {(P)  -- ++(\x:1) to[out=\x, in =\x+36]  (\x+36:1) -- (P)}
\foreach \x in {8*36,9*36} {(P)  -- ++(\x:1.5)};
 \fill [Qs] (P) circle (3pt);
\end{scope}

\begin{scope}
[xshift =-3.5cm, yshift = 7cm,  looseness=5]
\node at (0,-3) {general in \caseP{1}};
\draw[rounded corners] (-2.5, 2.5) rectangle (2.5, -2.5);
 \path (0,0) coordinate(P) ;
 
\draw[C1]
\foreach \x in {0,270, 135} {(P)  -- ++(\x:1) to[out=\x, in =\x-45]  (\x-45:1) -- (P)}
\foreach \x in {180, 45} {(P)  -- ++(\x:1.5)};
 \fill [Qs] (P) circle (3pt);
\end{scope}

\begin{scope}
[xshift =-6cm,looseness=5]
\node at (0,-3) {$X_{1.1}$, $X_{1.2}$, or $X_{1.3}$};
\draw[rounded corners] (-2.5, 2.5) rectangle (2.5, -2.5);
 \path (0,0) coordinate(P) ;
 
\draw[C2]
\foreach \x in {0, 6*36} {(P)  -- ++(\x:1) to[out=\x, in =\x+36]  (\x+36:1) -- (P)}
\foreach \x in {180, 72} {(P)  -- ++(\x:1.5)};
\draw[C1]
\foreach \x in {3*36} {(P)  -- ++(\x:1) to[out=\x, in =\x+36]  (\x+36:1) -- (P)}
\foreach \x in {8*36,9*36} {(P)  -- ++(\x:1.5)};
 \fill [Qs] (P) circle (3pt);
\fill [Qs] (126:1.8) circle (3pt);
\end{scope}

\begin{scope}
[ looseness=5]
\node at (0,-3) {$X_{1.4}$};
\draw[rounded corners] (-2.5, 2.5) rectangle (2.5, -2.5);
 \path (0,0) coordinate(P) ;
 
\draw[C1]
\foreach \x in { 270} {(P)  -- ++(\x:1) to[out=\x, in =\x-45]  (\x-45:1) -- (P)}
\foreach \x in {45} {(P)  -- ++(\x:1.5)}
 (P) to [out=90, in = -45, looseness = 1.2] (112.5:2) -- ++(135:0.5);

\draw[C2]
\foreach \x in {0} {(P)  -- ++(\x:1) to[out=\x, in =\x-45]  (\x-45:1) -- (P)}
\foreach \x in {180} {(P)  -- ++(\x:1.5)}
(P) to [out=135, in = -90, looseness = 1.2] (112.5:2) -- ++(90:0.5);

 \fill [Qs] (P) circle (3pt) 
 (112.5:2) circle (3pt) ;
\end{scope}

\begin{scope}
[xshift =6cm]
\node at (0,-3) {$X_{1.5}$};
\draw[rounded corners] (-2.5, 2.5) rectangle (2.5, -2.5);
 \path (-1,0) coordinate(P) ;
 \path (1,0) coordinate(Q) ;
 
\draw[C1]
(P) ++(240:.5) -- (P) to[out = 60, in = 120, looseness=1.5] (Q) -- ++(300:1) to[out=300, in = 0, looseness = 2.5] (2,0) -- (Q) -- ++(180:.5); 
\draw[C2]
(Q) ++(60:.5) -- (Q) to[out = 240, in = 300, looseness=1.5] (P) -- ++(120:1) to[out=120, in = 180, looseness = 2.5] (-2,0) -- (P) -- ++(0:.5);

\fill [Qs] (P) circle (3pt);
\fill [Qs] (Q) circle (3pt);
\end{scope}
\end{tikzpicture}
\caption{Non-normal loci in case \caseP{} }\label{fig: P double loci}
\end{figure}

\begin{prop}\label{prop: Godeaux from P}
 Let $X$ be a Gorenstein stable surface with normalisation $\bar X$ isomorphic to $\pp^2$.
 Then either $X$ is obtained as in one of the above cases   \caseP{1}, \caseP{2} or \caseP{3},  or it is  isomorphic to the surface $X_{1.5}$ in Table~1 of \cite{FPR15b}.\end{prop}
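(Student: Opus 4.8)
The plan is to classify the admissible gluing involutions $\tau$. By the $K_X^2$-condition and the lc pair condition, $\bar D$ is a reduced \emph{nodal} plane quartic: its degree is $4$ because $K_{\bar X}+\bar D$ is an ample Cartier divisor on $\bar X=\pp^2$ with square $1$, forcing $\OO(d-3)$ with $(d-3)^2=1$ and $d>3$. Since the general point of $D$ is a node of $X$, the map $q=(/\tau)\colon\bar D^\nu\to D^\nu$ is the degree-two quotient by $\tau$, and $\tau$ permutes the irreducible components of $\bar D^\nu$, either fixing a component (acting on it as an involution) or interchanging two components of equal degree. I would organise the whole argument around the partition of $4$ given by the degrees of the components of $\bar D$.

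The key tool I would introduce is the line bundle $M:=\nu^*(K_X|_D)$ on $D^\nu$. By adjunction $\omega_{\bar D}=\OO_{\bar D}(1)=\pi^*(K_X|_D)$, so on normalisations $\bar\nu^*\OO_{\bar D}(1)=q^*M$; as $q$ has degree $2$ and $\bar\nu^*\OO_{\bar D}(1)$ has degree $(K_{\bar X}+\bar D)\cdot\bar D=4$, we get $\deg M=2$. Because $X$ is Gorenstein, $M$ is Cartier, so its degree on every component of $D^\nu$ is an integer. If $\tau$ fixed a component $\bar D_i^\nu$, that degree would be $\tfrac12\deg\bar D_i$; hence \emph{fixed components must have even degree}. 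This immediately forbids $\tau$ from fixing a line or a cubic: the partition $(3,1)$ is impossible, every line of $\bar D$ must be matched with another line, and the only surviving possibilities are an irreducible quartic fixed by $\tau$; two conics, either both fixed or interchanged; a fixed conic together with an interchanged pair of lines; or four lines in two interchanged pairs.

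To cut this list down I would compute $\chi(\OO_D)$ in each case and impose the Godeaux condition $\chi(\OO_D)=-2$, i.e. $p_a(D)=3$. Writing $\chi(\OO_{D^\nu})$ via Hurwitz for the double quotient $q$ and $\chi(\OO_D)=\chi(\OO_{D^\nu})-\delta_D$, everything is controlled by how $\tau$ pairs the two branches at each node of $\bar D$: pairing the two branches of a single node makes that node a smooth point of $D$, while pairing branches of distinct nodes merges them into one ordinary multiple point of $D$. Running this bookkeeping shows that a fixed irreducible quartic or a pair of fixed conics always yields $\chi(\OO_D)\ge -1>-2$ — there are never enough branches to accumulate $\delta_D=\chi(\OO_{D^\nu})+2$ — so both are excluded. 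Conversely, in each surviving configuration the requirement $p_a(D)=3$ forces $\tau$ to pair the node-branches so that they accumulate at a single point of high multiplicity (a quadruple point for two interchanged conics, a quintuple point for a conic plus an interchanged pair of lines), which simultaneously rigidifies the gluing datum.

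For the final identification I would argue projectively. An isomorphism between two conics compatible with $\OO(1)$ extends to $\Aut(\pp^2)$, so in the two-conic case $\tau$ is the restriction of a projective transformation permuting the four intersection points and interchanging the conics; the Godeaux-forced cyclic pairing of the nodes makes it a $4$-cycle, and via the $S_4$-action on the pencil $\kc$ (Remark~\ref{rem: birapporto}) this is exactly the datum of \caseP{1}. In the conic-plus-two-lines case $C$ and the reducible conic $L_1+L_2$ span such a pencil $\kc$, the induced involution on $C$ is either the one preserving or the one not preserving $C\cap L_1$, giving \caseP{2} or \caseP{3}, and the Gorenstein-glueing condition forces the gluing of $L_1$ to $L_2$ to move the node $L_1\cap L_2$, which is precisely the normalisation of $\tau$ there. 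For four lines I would invoke \cite[Table~1]{FPR15b}: the admissible surfaces are $X_{1.1},\dots,X_{1.5}$, and $X_{1.1}$–$X_{1.4}$ reappear as the members of \caseP{1}, \caseP{2}, \caseP{3} with $C$ reducible, leaving only $X_{1.5}$ outside those families. The hard part, where I would spend the most care, is the uniform branch-pairing computation of $\chi(\OO_D)$: it is at once what excludes the irreducible-quartic and two-fixed-conic configurations and what pins down the gluing in the surviving ones, and it must be carried out while tracking both the freeness of $\tau$ on node preimages and the precise seminormal singularity types produced on $D$.
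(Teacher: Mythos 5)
Your proposal is correct and follows the same overall strategy as the paper: reduce to a nodal plane quartic $\bar D$, constrain the gluing via the Godeaux condition $\chi(\OO_D)=-2$ expressed through the number of nodes $\bar\mu$ of $\bar D$, the ramification $\rho$ of $\bar D^\nu\to D^\nu$ and the number $\mu_1$ of degenerate cusps, and then identify the surviving configurations with \caseP{1}--\caseP{3} and $X_{1.5}$ (citing \cite[Table 1]{FPR15b} for four lines). The one genuinely different ingredient is your parity lemma for $M=\nu^*(K_X|_D)$. The paper does not use it: it derives $\bar\mu=\tfrac{\rho}{2}+2\mu_1+2\ge 4$ directly from \cite[Lem.~3.5]{FPR15a}, which forces $\bar D$ reducible with at most four components at once, and it disposes of the cubic-plus-line case by noting that $\tau$ must preserve the line (the two components of $\bar D^\nu$ carry different numbers of marked points) and hence fixes a node preimage, violating the Gorenstein-glueing condition. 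Your degree argument buys a cleaner a priori restriction -- fixed components have even degree, interchanged ones equal degree -- at the cost of having to exclude the fixed irreducible quartic and the two-fixed-conics configurations separately; both do fall to the same Euler-characteristic count, though for two fixed conics you also need that an involution of $\pp^1$ has two fixed points, so $\rho\ge4$ while $\mu_1\ge1$, and $\mu_1\ge1$ itself rests on the classification of Gorenstein slc singularities (a node of $\bar D$ always produces a degenerate cusp), which the paper cites. One small correction: the fact that $\tau$ may not identify the two branches of the node $L_1\cap L_2$, and must act on the four nodes of two interchanged conics as a $4$-cycle, is forced by the Godeaux condition $\mu_1=1$, not by the Gorenstein-glueing condition, which is satisfied either way.
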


\begin{proof}
Let $\mu_1$ be the number of degenerate cusps in $X$,  let $\rho$ be  the number of ramification points of the map $\bar D^\nu \to D^\nu$ and let $\bar \mu$ be the number of nodes of $\bar D$.
Then by  \cite[Lem.~3.5]{FPR15a} we have the equality $\chi(D) = \frac 12\left( \chi({\bar D})-\bar \mu\right)+\frac\rho 4+\mu_1$. Using $\chi(D) = \chi(\bar D) = -2$ we obtain
\begin{equation}\label{eq: nodes on barD} \bar\mu = \frac\rho 2+2\mu_1+2\end{equation}
Thus $\bar D$ has at least one node, which implies $\mu_1\geq 1$ by the classification of Gorenstein slc singularities (see the proof of Lemma 3.5 in \cite{FPR15a}). Also, a plane quartic can have at most $6$ nodes, so in total we get $4\leq \bar\mu\leq6$.

Note that a plane quartic with at least $4$ nodes is reducible, so $\bar D$ consists of four lines, two lines and a conic, two conics, or a line and a nodal cubic. We proceed case by case.

\begin{description}
\item[$\bar D = \text{four general lines}$] This case has been classified in \cite[Sect.~4.2]{FPR15b}. The four surfaces  $X_{1.1}$, $X_{1.2}$, $X_{1.3}$  and $X_{1.4}$ appear as special cases of construction \caseP{1}, \caseP{2} or \caseP{3} and $X_{1.5}$ is listed separately.

\item[$\bar D = \text{a conic and two lines}$] The gluing involution $\tau$ has to preserve the conic and exchange the two lines. By the Gorenstein condition, the involution on the conic cannot fix any of the four intersection points with the pair of lines.

By \eqref{eq: nodes on barD} we have a unique degenerate cusp in $X$, thus $\tau$ cannot interchange the preimages  in $\bar D^\nu$ of the intersection of the two lines.

Now there are two possibilities: either the involution on the conic preserves the intersection with each individual line  and we have case \caseP{2},  or it does not and  we have case \caseP{3}.
\item[$\bar D = \text{ two irreducible conics}$]
By \eqref{eq: nodes on barD} we have a unique degenerate cusp and $\tau$ has no fixed points on $\bar D^\nu$. Thus $\tau$ exchanges the two conics $C$ and $C'$, that is, we have an abstract isomorphism  $\phi = \tau|_C\colon C\to C'$ preserving the four intersection points $Q_1, \dots, Q_4$. We can number the intersection points such that $\phi(Q_i) = Q_{i+1}$ (where $Q_5=Q_1$) because otherwise there would be more than one degenerate cusp.

Now consider the unique automorphism $\sigma$ of $\IP^2$ that acts on the $Q_i$ in the same way as $\phi$ and let $\sigma C$ be the image of $C$ under $\sigma$. 
The composition $\sigma\circ \inverse\phi\colon C'\to  \sigma C$ is an abstract isomorphism of two plane conics fixing four points in the plane. By \cite[Es.~4.24]{FFP11} (see also Remark \ref{rem: birapporto}) it is actually induced by the identity on $\IP^2$, thus $C' = \sigma C$ and $\phi=\sigma$ and we are in case \caseP{1}. 
\item[$\bar D = \text{a nodal cubic and a line}$]  The involution has to preserve the line, because the number of marked points on the two components of $\bar D^\nu$ is different. In particular any $\tau$ fixes the preimage of a node of $\bar D$ and the Gorenstein condition cannot be satisfied.
\end{description}
We have enumerated all possible cases for $\bar D$ and thus concluded the classification.
\end{proof}

\subsection{Case \caseE+}\label{sect: case E}

\subsubsection{Stable Godeaux surfaces and bi-tri-elliptic curves}

 Let $X$ be a stable Godeaux surface such that its normalisation $\bar X$  is the symmetric product of an elliptic curve $E$. We continue to use the notation from \eqref{diagr: pushout}.
 Recall that the conductor locus $\bar D\subset \bar X$ is a stable curve of arithmetic genus two, which is a trisection of the Albanese map $a\colon \bar X =S^2 E\to E$.
 The Godeaux condition
 (cf. \S \ref{ssec: glue})   implies that $\chi(D) = \chi(X) -\chi(\bar X)+\chi(\bar D)=0$, so we get a configuration
 \begin{equation}\label{diag: bitrielliptic}
  \begin{tikzcd}
   {} &\bar D \arrow{dl}{2:1}[swap]{\pi}\arrow{dr}{q}[swap]{3:1}\\
   D && E
  \end{tikzcd},
 \end{equation}
 where $D$ and $E$ are smooth elliptic curves. 
 This motivates the following definition (cf. \cite{FPR16a}). 
 \begin{defin}
A bi-tri-elliptic configuration is a diagram of curves as in \eqref{diag: bitrielliptic} 
where $D$, $E$ are elliptic curves, $\bar D$ is a stable curve of genus $2$ and $q$ and $\pi$ are finite with $\deg q =3$ and $\deg \pi=2$. The map $\pi$ is the quotient by an involution which we call the bi-elliptic involution of the configuration.

A  curve of genus $2$ is called bi-tri-elliptic curve if it admits a bi-tri-elliptic configuration.
\end{defin}

It turns out that this configuration of curves determines the surface $X$ uniquely.
\begin{prop}\label{prop: Godeaux from E+ vs. bitrielliptic configurations}
There is a bijection (up to isomorphism) between bi-tri-elliptic configurations and Gorenstein stable Godeaux surfaces with normalisation the symmetric product of an elliptic curve.
\end{prop}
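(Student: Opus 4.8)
The statement is naturally split into two constructions that I must show are mutually inverse, and the whole proof runs through Koll\'ar's glueing principle recorded in \S\ref{ssec: glue}: a Gorenstein stable Godeaux surface with normalisation $\bar X$ is the same datum as a triple $(\bar X,\bar D,\tau)$ satisfying the four conditions listed there. The forward map is essentially the discussion preceding the statement. Given such an $X$ with $\bar X=S^2E$, the conductor $\bar D$ is a stable genus-$2$ trisection of the Albanese map $a$, so $q:=a|_{\bar D}\colon\bar D\to E$ has degree $3$; the glueing involution $\tau$ on $\bar D^\nu$ produces the quotient $\pi\colon \bar D\to D$, and the Godeaux condition $\chi(D)=0$ forces $D$ to be a smooth elliptic curve. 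This yields the bi-tri-elliptic configuration \eqref{diag: bitrielliptic}, well defined up to isomorphism because $\bar X$, $\bar D$ and $\tau$ are.

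For the inverse map the plan is to start from a bi-tri-elliptic configuration and reconstruct the triple. The curve $E$ gives back $\bar X=S^2E$ and its Albanese $a$. The key point is to recover the embedding $\bar D\hookrightarrow S^2E$ as a trisection \emph{canonically} from the configuration. Using $S^2E=\operatorname{Hilb}^2(E)$, a morphism $\bar D\to S^2E$ is the same as a length-$2$ subscheme $Z\subset \bar D\times E$, flat of degree $2$ over $\bar D$, and $\bar D$ becomes a trisection with $a|_{\bar D}=q$ exactly when the fibrewise sum of $Z$ along $E$ equals $q$; equivalently, pulling back along the double cover $E\times E\to S^2E$ one must produce the curve $\hat D=p^{-1}(\bar D)$ together with its two projections to $E$. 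That the trielliptic map $q$, together with the bi-elliptic involution, determines this datum --- and hence the embedded pair $(S^2E,\bar D)$ up to isomorphism --- is exactly the theory of bi-tri-elliptic curves developed in \cite{FPR16a}; I would quote it at this point. Finally I take as glueing involution $\tau$ the one induced by $\pi$ on $\bar D^\nu$.

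It then remains to check that $(S^2E,\bar D,\tau)$ satisfies the four conditions of \S\ref{ssec: glue}, so that it glues to a Gorenstein stable Godeaux surface. The lc pair and $K_X^2$ conditions hold because $(S^2E,\bar D)$ is precisely the pair of type \caseE+ classified in \cite{FPR15a}; the Gorenstein-glueing condition is the requirement that $\tau$ be fixed-point free on the preimages of the nodes of $\bar D$, which is built into the notion of a bi-tri-elliptic configuration; and the Godeaux condition reads $\chi(D)=0$, which holds since $D$ is elliptic. The two constructions are mutually inverse: starting from $X$ one reads off $q=a|_{\bar D}$ and $\pi\colon\bar D\to\bar D/\tau$, and feeding these back reconstructs the same embedded $\bar D\subset S^2E$ and the same $\tau$, whence the same $X$ by the uniqueness in Koll\'ar's principle; the opposite composite is identical by construction.

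The hard part will be the canonical reconstruction of the embedding $\bar D\hookrightarrow S^2E$, i.e.\ proving that a bi-tri-elliptic configuration determines the trisection inside $S^2E$ up to isomorphism. Everything else is either bookkeeping with the four glueing conditions or a direct application of results already available (the classification of the pairs $(\bar X,\bar D)$ in \cite{FPR15a} and Koll\'ar's glueing in \cite{KollarSMMP}). I would therefore isolate this reconstruction as the technical heart, treating it through the correspondence between trisections of $a$ and degree-$2$ divisors $Z\subset\bar D\times E$ with fibrewise sum $q$, and through the analysis of the induced double cover $\hat D\subset E\times E$.
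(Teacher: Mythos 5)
Your overall architecture matches the paper's: the forward map is the restriction of the Albanese map together with the quotient by the glueing involution, the inverse map reconstructs the triple $(\bar X,\bar D,\tau)$ and appeals to Koll\'ar's glueing principle, and you correctly isolate the one non-formal step, namely that the configuration canonically determines the embedding $\bar D\hookrightarrow S^2E$ as a trisection of the Albanese map. The problem is that you do not actually carry out that step: you reduce it to a moduli description of maps to $S^2E=\operatorname{Hilb}^2(E)$ (a flat degree-$2$ subscheme $Z\subset\bar D\times E$ whose fibrewise sum is $q$) and then assert that the existence and uniqueness of such a $Z$ is ``exactly the theory developed in \cite{FPR16a}''. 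It is not: that reference classifies bi-tri-elliptic configurations abstractly via Jacobians and the Frey--Kani construction, and says nothing about embeddings into $S^2E$. Nor is it clear how you would produce $Z$ from the configuration; for instance the naive candidate, pushing forward the complementary divisor $q^{-1}(q(x))-x$, has fibrewise sum $2q(x)$ rather than $q(x)$.

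The paper fills this gap with Lemma~\ref{lem:triple}, proved by triple-cover structure theory: the trace splitting $q_*\OO_{\bar D}=\OO_E\oplus\mathcal E^{\vee}$ and the Casnati--Ekedahl theorem embed $\bar D$ in $\pp(\mathcal E)$ as a trisection, and the computations $\deg\mathcal E=1$, $h^0(\mathcal E)=1$, $h^0(\mathcal E(-P))=0$ show that $\mathcal E$ is the unique non-split extension of a degree-$1$ line bundle by $\OO_E$, whence $\pp(\mathcal E)\cong S^2E$. Note also that only the triple cover $q$ is needed for the embedding; the bi-elliptic involution plays no role there and enters only as the glueing datum, so your phrasing conflates the two. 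Your verification of the four glueing conditions and of mutual inverseness is fine once the embedding lemma is in place, but as written the proof is incomplete at precisely the point you yourself identify as the technical heart.
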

\begin{proof}
 We have seen above that every Gorenstein stable Godeaux surfaces with normalisation the  symmetric product of an elliptic curve gives rise to a uniquely determined bi-tri-elliptic configuration. We now need to show that we can recover the surface $X$ just from the datum \eqref{diag: bitrielliptic}.
 The main step is
 \begin{lem}\label{lem:triple}
 Let $q\colon \bar D\to E$ be a triple cover, with $E$ an elliptic curve and $\bar D$ a nodal curve of genus 2. Then $\bar D$ embeds in $S^2E$ as a trisection of the Albanese map $S^2E\to E$.
\end{lem}
\begin{proof} The trace map gives a splitting $q_*\OO_{\bar D}=\OO_E\oplus \ke^{\vee}$, where $\ke$ is a rank 2 vector bundle on $E$. 
By   \cite[Thm~1.3]{CE96} (cf. also, \cite{miranda85}),  the curve   ${\bar D}$ is embedded in $\pp(\ke)$ as a trisection and $\chi(\ke^\vee)  = \chi(\ko_{\bar D}) = -1$, so by Riemann--Roch $\deg \ke = 1$.
The projection formula  $$q_*\omega_{\bar D}=\omega_E\oplus \omega_E\otimes \ke=\OO_E\oplus \ke$$ gives $h^0(\ke)=1$; analogously, given a point $P\in E$, we get $0=h^0(\omega_{\bar D}(-q^*P))=h^0(\OO_E(-P))+h^0(\ke(-P))$, hence $h^0(\ke(-P))=0$. It follows that the only section of $\ke$ vanishes nowhere and it gives  an exact sequence $0\to \OO_E\to \ke \to \kl\to 0$, where $\kl$ is a line bundle of degree 1. The sequence is not split, since $h^0(\ke)=1$. This proves that $\pp(\ke)\cong S^2E$, see e.\,g.\ \cite[\S1]{catanese-ciliberto93}.
\end{proof}

Write $\bar X= S^2E$ and  let $\Phi$ be a  fibre of the Albanese map $a\colon \bar X\to E$; since $h^0(\ke)=1$, there    is a section  $C_0$ of $a$ with $C_0^2=1$. The classes of $C_0$ and $\Phi$ generate $\Pic(\bar X)$ up to numerical equivalence and $K_{\bar X}$ is linearly  equivalent to $-2C_0+\Phi$.   Since $\bar D\Phi=3$ and  $2=(K_{\bar X}+\bar D)\bar D$ by adjunction, it follows that $\bar D$ is algebraically  equivalent to $3C_0-\Phi$ and $K_{\bar X}+\bar D$ is algebraically equivalent to $C_0$.

 Denote by $\tau$ the bi-elliptic involution of the bi-tri-elliptic configuration. Then, by Koll\'ar's glueing principle, the triple $(\bar  X, \bar D, \tau)$ gives rise to a stable surface $X$. The Gorenstein and the Godeaux condition (cf. \S \ref{ssec: glue})  are easily checked and we get a Gorenstein stable Godeaux surface.
 
 Clearly, the two constructions are inverse to each other, up to isomorphism.
\end{proof}
Thus to understand Gorenstein stable Godeaux with normalisation a symmetric product of an elliptic curve we need to classify bi-tri-elliptic configurations.

\subsubsection{Classification of bi-tri-elliptic curves}
We now recall  the classification of bi-tri-elliptic configurations from \cite{FPR16a}, where we study more generally $(p,d)$-elliptic configurations on curves of genus $2$.

If $\bar D$ is a stable bi-tri-elliptic curve, then it is either smooth or the union of two elliptic curves meeting at a point. So  the Jacobian $A:=J(\bar D)=\Pic^0(\bar D)$ is compact, that is, $\bar D$ is of compact type, and   $\bar D$ can be embedded in $A$ as a principal polarisation.

Following work of Frey and Kani (see \cite{frey-kani}),  the Jacobian of a stable bi-elliptic curve $\bar D$ of genus 2 of compact type can be described as follows. 
Take  elliptic curves $D$, $D'$ and a subgroup $G\subset D[2]\times D'[2]$ such that $G$ is isomorphic to $(\IZ/2)^2$ and  $G\cap (D\times\{0\})=G\cap(\{0\}\times  D')=\{0\}$.
Then  the double of  the  product polarisation on $D\times  D'$ induces a principal polarisation $\Theta$ on $A=(D\times D')/G$.
If $\bar D\subset A$ is a curve in the class of $\Theta$, then $\bar D$ is stable of genus $2$ of compact type and the map $A\to D/D[2]=D$ restricts to a degree two map $\pi \colon \bar D\to D$.

If moreover $A$ contains  a $1$-dimensional subgroup such that $\bar D\bar F=3$, then the quotient map $A\to A/\bar F:=E$ induces a tri-elliptic map $q\colon \bar D\to E$. To construct $\bar F$ we construct an appropriate $1$-dimensional subgroup $F\subset D\times D'$ resulting in a diagram
\[
 \begin{tikzcd}
  F\rar[ hookrightarrow]{(\phi, \phi')}\dar{h_F}& D\times D'\dar{/G}\\
  \bar F\rar[hookrightarrow]\arrow{dr}[swap]{m:1} &  J(\bar D)\rar{q}  \dar{\pi} & E\\
   & D& 
 \end{tikzcd}.
\]
From this situation we extract a numerical invariant of the bi-tri-elliptic configuration,
\begin{equation}\label{eq: m} m = m(\bar D, \pi,q) := \deg (\bar F\to D ) = \frac{4\deg \phi}{\deg h_F},
\end{equation} 
which we call the twisting number of $(\bar D, \pi, q)$. 
We now give two instances of this construction. 
 \begin{exam}\label{ex: odd}
Let $F$ be an elliptic curve and let $\phi\colon F\to D$ and $\phi' \colon F\to D'$ isogenies such that $\ker \phi\cap \ker \phi'=\{0\}$, so that $\phi\times\phi'\colon F\to D\times D'$ is injective. We abuse notation and denote again by $F$ the image of $\phi\times \phi'$. 
Assume that $\deg\phi+\deg \phi'=6$ and $\deg\phi$ is odd. Set $G:=F[2]\subset D\times D'$: for $i=1,2$  we have $G\cap (D\times\{0\})=G\cap(\{0\}\times  D')=\{0\}$  since $DF=\deg \phi$ and $D'F = \deg \phi'$ are odd. Let $\bar F$ be the image of $F$ in $A:=(D\times D')/G$. If $\Theta$ is the principal polarisation of $A$, then we have $4\Theta\bar F=2(D\times \{0\}+\{0\}\times D')F=12$, namely $\Theta\bar F=3$. 

The twisting number in this example is $m = \deg \phi$. 
\end{exam}

\begin{exam}\label{ex: even}
We proceed as in Example \ref{ex: odd}, but here  $\phi$ and $\phi'$ satisfy  $\deg\phi+\deg \phi'=3$.  It is possible to find  a subgroup $G\subset D[2]\times D'[2]$ of order 4 such that $G\cap (D\times\{0\})=G\cap(\{0\}\times  D')=\{0\}$ and $G\cap F$ has order 2. In fact, there are precisely  four choices for such a subgroup. Again, we denote by $\bar F$ the image of $F$ in $(D\times D')/G$ and we compute $4\bar F\Theta=2F(2(D\times \{0\}+\{0\}\times D'))=12$, that is, $\bar F\Theta=3$. 

The twisting number of the resulting bi-tri-elliptic configuration is $m = 2\deg \phi$. 
\end{exam}
These two examples cover all possible cases.
\begin{prop}[\protect{\cite[Cor. 4.1]{FPR16a}}]\label{prop: classification bi-tri-elliptic}
Let $(\bar D, \pi, q)$ 
 be a bi-tri-elliptic configuration on a stable curve of genus $2$. Then the twisting number $m$ defined in \eqref{eq: m} satisfies $1\le m\le 5$ and there are the following possibilities:
\begin{enumerate}
\item  $m$ is  odd and  the configuration arises as in Example \ref{ex: odd} with $\deg\phi=m$;
\item $m$ is even and  the configuration  arises as in Example \ref{ex: even} with $\deg\phi=\frac m 2$.
\end{enumerate}
\end{prop}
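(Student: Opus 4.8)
The plan is to reconstruct the data of Example \ref{ex: odd} or \ref{ex: even} directly from the Jacobian of the configuration and then to read off $m$ from a single intersection computation. First I would set $A:=J(\bar D)$ with its principal polarisation $\Theta=[\bar D]$; since $\bar D$ is of compact type, $A$ is an abelian surface. The bi-elliptic involution (the one with quotient $\pi\colon\bar D\to D$) together with the Frey--Kani description recalled above (following \cite{frey-kani}) presents $A$ as a quotient $\psi\colon D\times D'\to A=(D\times D')/G$ with $G\cong(\IZ/2)^2$, with $G\cap(D\times\{0\})=G\cap(\{0\}\times D')=\{0\}$, and with $\psi^*\Theta$ numerically equivalent to the doubled product polarisation $2\big((D\times\{0\})+(\{0\}\times D')\big)$. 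The tri-elliptic map $q\colon\bar D\to E$ of degree $3$ induces $q_*\colon A\to E$; I take $\bar F\subset A$ to be the identity component of $\ker q_*$, so that $\bar F$ is the elliptic curve complementary to $q^*E$ and hence $\Theta\bar F=\Theta\cdot q^*E=\deg q=3$. Finally I let $F\subset D\times D'$ be the identity component of $\psi^{-1}(\bar F)$, compose the inclusion $F\hookrightarrow D\times D'$ with the two projections to get homomorphisms $\phi\colon F\to D$ and $\phi'\colon F\to D'$, and record $h_F:=\psi|_F\colon F\to\bar F$, whose degree is $\deg h_F=|G\cap F|$.

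The heart of the argument is one computation. By the projection formula and the numerical form of $\psi^*\Theta$,
\[
|G\cap F|\cdot 3 = \deg h_F\cdot(\Theta\bar F) = (\psi^*\Theta)\cdot F = 2\big((D\times\{0\})+(\{0\}\times D')\big)\cdot F = 2(\deg\phi+\deg\phi').
\]
First I would observe that $F$ is neither $D\times\{0\}$ nor $\{0\}\times D'$: in either case $G\cap F=\{0\}$ would force $\Theta\bar F=2$, contradicting $\Theta\bar F=3$. Hence both $\phi,\phi'$ are isogenies, so $\deg\phi,\deg\phi'\ge 1$. Since $|G\cap F|$ divides $|G|=4$ it lies in $\{1,2,4\}$, and the value $1$ is excluded because the right-hand side above is even. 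This leaves exactly two cases, and in both the formula \eqref{eq: m} reads $m=4\deg\phi/\deg h_F$.

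If $|G\cap F|=4$ then $G\subset F$, so $G=F[2]$ (both being isomorphic to $(\IZ/2)^2$), and the displayed equality gives $\deg\phi+\deg\phi'=6$. Here the Frey--Kani transversality $G\cap(\{0\}\times D')=\{0\}$ says $\ker\phi$ has no $2$-torsion, i.e.\ $\deg\phi$ is odd, and likewise $\deg\phi'$ is odd; thus $\deg\phi\in\{1,3,5\}$ and the configuration is exactly that of Example \ref{ex: odd}, with $m=4\deg\phi/4=\deg\phi$ odd. If instead $|G\cap F|=2$ then $\deg\phi+\deg\phi'=3$, forcing $\{\deg\phi,\deg\phi'\}=\{1,2\}$; this is precisely the situation of Example \ref{ex: even}, with $m=4\deg\phi/2=2\deg\phi\in\{2,4\}$ even. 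In both cases $1\le m\le 5$, which completes the proof.

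The step I expect to be most delicate is not the intersection number but the parity bookkeeping in the first case: one must verify that the transversality conditions $G\cap(D\times\{0\})=G\cap(\{0\}\times D')=\{0\}$, when $G=F[2]$, translate exactly into the oddness of $\deg\phi$ and $\deg\phi'$, so that the a priori admissible splittings $\{\deg\phi,\deg\phi'\}=\{2,4\}$ are excluded and the two examples genuinely exhaust all configurations. One should also make sure that the descent of the doubled product polarisation accounts for all of $G$ (not merely $G\cap F$), since the passage from $\deg\phi+\deg\phi'$ to the twisting number $m$ rests on that normalisation.
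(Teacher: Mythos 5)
Your argument is correct and is essentially the intended one: the paper itself only cites \cite[Cor.~4.1]{FPR16a} for this statement, but your reconstruction via the Frey--Kani splitting and the single intersection computation $3\deg h_F=(\psi^*\Theta)\cdot F=2(\deg\phi+\deg\phi')$, followed by the parity analysis of $|G\cap F|\in\{2,4\}$, is exactly the natural completion of the framework the paper sets up around \eqref{eq: m}. The one step you state a little quickly is $\Theta\bar F=3$; it is cleanest to observe that $\Theta\bar F=\bar D\cdot\bar F$ is the degree of the finite map $\bar D\to A/\bar F$ through which $q$ factors (no component of $\bar D$ can be a translate of $\bar F$, as $q$ is finite), so it divides $3$ and cannot be $1$ because $p_a(\bar D)=2$.
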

\begin{rem}\label{rem: reducible bi-tri-elliptic}
We now consider the case that in \eqref{diag: bitrielliptic} the bi-tri-elliptic curve is reducible. It is shown in \cite[Sect. 4]{FPR16a} that there is an elliptic curve $D$ admitting an endomorphism $\psi\colon D\to D$ of degree $2$ such that $\bar D = D\cup_0 D $ and the bi-tri-elliptic configuration is given by 
\[ \begin{tikzcd}{} &D \cup_0 D\arrow{dl}{2:1}[swap]{\pi=\id\cup\id}\arrow{dr}{q=\id\cup \psi}[swap]{3:1}\\
   D && D
  \end{tikzcd}.
\]
In \cite[Sect. 4]{FPR16a} it is also shown  that there are exactly ten such configurations, two for each possible twisting number $1\leq m \leq 5$. 
\end{rem}

\subsubsection{Classification of surfaces}
To exploit the connection between bi-tri-elliptic configurations and stable Godeaux surfaces we make the following definition.
\begin{defin}
Let $X$ be a Gorenstein stable Godeaux surface with normalisation the symmetric product of an elliptic curve and let $(\bar D, \pi, q)$ be the associated bi-tri-elliptic configuration. 
We then say $X$ is of type \caseE{m}, where $m$ is the twisting number of $(\bar D, \pi, q)$ defined in \eqref{eq: m}. 
\end{defin}
Then Propositions \ref{prop: Godeaux from E+ vs. bitrielliptic configurations} and  \ref{prop: classification bi-tri-elliptic} immediately yield: 
\begin{prop}\label{prop: classification Godeaux from E+}
 Let $X$ be a Gorenstein stable Godeaux surface with normalisation the symmetric product of an elliptic curve. Then $X$ is of type \caseE{m} for some $1\leq m\leq 5$.
\end{prop}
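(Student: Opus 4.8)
The plan is to observe that \Cref{prop: classification Godeaux from E+} is essentially a formal consequence of the two results immediately preceding it, so the proof should be very short — really just a matter of chaining the established bijection with the existing classification of the auxiliary objects. First I would invoke \autoref{prop: Godeaux from E+ vs. bitrielliptic configurations}, which sets up a bijection between Gorenstein stable Godeaux surfaces whose normalisation is the symmetric product of an elliptic curve and bi-tri-elliptic configurations. This lets me replace the given surface $X$ by its associated bi-tri-elliptic configuration $(\bar D, \pi, q)$, transferring the problem entirely to the combinatorics of curves.

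Next I would apply \autoref{prop: classification bi-tri-elliptic}, which asserts that for any bi-tri-elliptic configuration on a stable curve of genus $2$, the twisting number $m$ defined in \eqref{eq: m} satisfies $1\le m\le 5$. Since the type \caseE{m} of $X$ is \emph{defined} to be the twisting number of its associated configuration, the bound $1\le m\le 5$ translates directly into the statement that $X$ is of type \caseE{m} for some $1\le m\le 5$. In short, the whole argument is the composition ``surface $\mapsto$ configuration $\mapsto$ twisting number,'' together with the numerical bound on that number.

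I do not expect any genuine obstacle here, since the real content has already been extracted into the earlier propositions; the only thing to be careful about is the logical bookkeeping. In particular I would make sure that the hypothesis of \autoref{prop: classification bi-tri-elliptic} — that $\bar D$ be a \emph{stable} curve of genus $2$ — is guaranteed, which it is, because the conductor locus of a Gorenstein stable Godeaux surface is by construction a stable genus $2$ curve (as recorded in the discussion leading to \eqref{diag: bitrielliptic}). With that verified, the claim follows immediately, and I would simply write that Propositions \ref{prop: Godeaux from E+ vs. bitrielliptic configurations} and \ref{prop: classification bi-tri-elliptic} combine to give the result.
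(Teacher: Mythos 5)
Your proposal is correct and matches the paper exactly: the paper derives this proposition as an immediate consequence of Propositions \ref{prop: Godeaux from E+ vs. bitrielliptic configurations} and \ref{prop: classification bi-tri-elliptic}, precisely the chain ``surface $\mapsto$ configuration $\mapsto$ twisting number'' you describe. Your extra check that $\bar D$ is a stable genus $2$ curve is sound bookkeeping, already guaranteed by the discussion preceding \eqref{diag: bitrielliptic}.
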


\subsection{Calculating fundamental groups}\label{sec:fundamental group}
\begin{prop}\label{prop: pi1 case P}
 Let $X$ be a Gorenstein stable Godeaux surface with normalisation $\IP^2$. 
 Then
 \begin{enumerate}
  \item If $X$ is as in case \caseP{1}, including $X\isom X_{1.4}$, then $\pi_1(X) \isom \IZ/4$.
  \item If $X$ is as in case \caseP{2}, including $X\isom X_{1.1}$ or $X\isom X_{1.2}$, then $\pi_1(X) \isom \{1\}$.
  \item If $X$ is as in case \caseP{3}, including $X\isom X_{1.3}$, then $\pi_1(X) \isom \IZ/3$.
  \item If $X\isom X_{1.5}$ then $\pi_1(X) \isom \IZ/5$.
 \end{enumerate}
\end{prop}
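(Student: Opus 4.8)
The plan is to read off each fundamental group from Koll\'ar's glueing presentation \eqref{diagr: pushout} by a Seifert--van Kampen argument. Since $X$ is the adjunction space obtained by glueing the normalisation $\bar X$ to the non-normal locus $D$ along the conductor $\bar D$ via the finite folding map $\pi|_{\bar D}\colon \bar D\to D$, the Seifert--van Kampen-type result \cite[Cor.~3.2]{FPR15b} gives $\pi_1(X)\isom \pi_1(\bar X)*_{\pi_1(\bar D)}\pi_1(D)$. As $\bar X\isom \IP^2$ is simply connected, this collapses to
\[\pi_1(X)\isom \pi_1(D)\big/\big\langle\!\big\langle\,\im\big(\pi_1(\bar D)\xrightarrow{(\pi|_{\bar D})_*}\pi_1(D)\big)\big\rangle\!\big\rangle.\]
Thus the whole computation reduces to understanding the two (nodal, rational) curves $\bar D$ and $D$ together with the folding map between them.

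Next I would write down $\pi_1(\bar D)$ and $\pi_1(D)$ explicitly. In every case of Proposition \ref{prop: Godeaux from P} all irreducible components of $\bar D$ and of $D$ are rational, so by normalising and re-glueing the nodes both groups are free, of rank equal to the first Betti number of the respective dual graph (equivalently $p_a=3$ for $\bar D$, and $p_a(D)=3$ as recorded in \S\ref{sect: case P}). I would fix a basepoint at a node and take as generators of $\pi_1(\bar D)$ the loops running along one component, crossing a node $Q_i$, returning along the other component and closing up across one fixed reference node; for $\pi_1(D)$ I would use the loops at the seminormal multiple point obtained by joining two of its branches by an arc in the normalisation $D^\nu$. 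The relevant configurations are exactly those drawn in Figure \ref{fig: P double loci}.

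The substance of the argument is the computation of $(\pi|_{\bar D})_*$ on these generators: a loop in $\bar D$ that crosses a node $Q_i$ is, after folding by the glueing involution $\tau$, routed through the quadruple point (case \caseP{1}), the quintuple point (cases \caseP{2}, \caseP{3}), respectively the seminormal points of $X_{1.5}$, entering along one branch and leaving along its $\tau$-partner, and translating this into the free generators of $\pi_1(D)$ produces the defining relations. For instance in case \caseP{1}, labelling the four branches of the quadruple point of $D$ by $P_1,\dots,P_4$ so that the node $Q_i$ of $\bar D$ joins the branches $P_i$ and $P_{i-1}$, the three node-generators of $\pi_1(\bar D)$ map to words of the shape $\ell_j\,\ell_{j-1}^{-1}\ell_4$ in free generators $\ell_2,\ell_3,\ell_4$ of $\pi_1(D)$; these relations let one eliminate $\ell_3,\ell_4$ in favour of $\ell_2$ and impose $\ell_2^4=1$, so that $\pi_1(X)\isom\IZ/4$. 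The cases \caseP{2}, \caseP{3} and $X_{1.5}$ are handled by the same bookkeeping and collapse to $\{1\}$, $\IZ/3$ and $\IZ/5$; for the reducible specialisations $X_{1.1},\dots,X_{1.4}$ one carries the extra node of $\bar D$ (and the corresponding degenerate cusp of $D$) through the identical computation and checks that the quotient is unchanged.

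I expect the main obstacle to be precisely this last transcription: tracking, with correct orientations and up to the conjugation ambiguity inherent in van Kampen, how each node-crossing of $\bar D$ threads through the higher seminormal point of $D$ under $\tau$, and ensuring the procedure is valid in the cases where $\bar D$ and $D$ have several components, where connectedness must be checked before the group-level theorem applies. Once the words $(\pi|_{\bar D})_*(g_j)$ are written down correctly, the simplification of the resulting finite presentation is routine and in each case exhibits a single surviving generator whose order gives the stated cyclic group.
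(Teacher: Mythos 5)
Your proposal follows essentially the same route as the paper: both invoke the Seifert--van Kampen result \cite[Cor.~3.2\,\refenum{iii}]{FPR15b} to reduce to $\pi_1(X)\isom \pi_1(D)/\langle\langle \pi_*\pi_1(\bar D)\rangle\rangle$, observe that both groups are free since all components are nodal rational curves, and then track the folding map on explicit loop generators case by case (the paper organises this bookkeeping via cell decompositions of the $1$-skeletons, pictured in its figures). Your sketch of case \caseP{1} matches the paper's computation in substance, and the remaining cases are the same kind of transcription you describe.
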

\begin{proof}
The proof relies on \cite[Cor.~3.2\,\refenum{iii}]{FPR15b}, which tells us that in the notation of Section \ref{ssec: glue}  we have
\[\pi_1(X)\isom \pi_1(\IP^2)\ast_{\pi_1(\bar D)}\pi_1(D)\isom \frac{\pi_1(D)}{R},\]
where $R=\langle\langle \pi_*\pi_1(\bar D)\rangle\rangle$ is the smallest normal subgroup of $\pi_1(D)$ generated by the image of $\pi_1(\bar D)$.
We make use of the explicit descriptions in Section \ref{sect: case P} and \cite[Table 1]{FPR15b}.

Note that topologically every irreducible component of $\bar D$ is a $2$-sphere; to do the computation we consider the following cell-decomposition:
on every conic we arrange the four intersection points on a great circle, depicted  as an ellipse, and consider these as the $0$-skeleton and $1$-skeleton. The sphere is formed by capping with two $2$-cells.
On every line we choose the three intersection points as $0$-cells, two intervals as $1$-cells, and then glue in a $2$-cell to make up the sphere. The order of the intersection points in the $1$-skeleton will be chosen such that $\pi\colon \bar D\to D$ is easily visualised, 
 that is, the involution is given by the obvious identification of oriented $1$-skeletons.

Below we will in each case give  the map between the $1$-skeletons of $\bar D$ and $D$ and denote by \tikz \fill[black] circle (3pt); a chosen base-point.  We will name paths in the $1$-skeleton of $\bar D$ with lower case letters which under $\pi$ will map to paths in the $1$-skeleton of $D$ marked by the matching upper  case letters. The indices will indicate the irreducible component  the path belongs to. 
For each conic, the two $2$-cells give one relation in the fundamental group, which we will  indicate below the figures; the $2$-cells of the lines do not affect the fundamental group.

Note that the shape of the $1$-skeleton of the non-normal locus $D$ does  in some cases differ from the mnemonic representation in Figure \ref{fig: P double loci}.

\begin{description}
 \item[general in \caseP{1}] The map on $1$-skeletons and the extra relations are as follows: 
\begin{center}
\begin{tikzpicture}[scale = .8]
\begin{scope}
\draw[C2, name path = C2, 
postaction={decorate,decoration={markings,
mark=at position .25 with {\arrow{stealth}};,
 mark=at position .25 with {\arrow{stealth}};,
mark=at position .75 with {\arrow{stealth}};,
mark=at position 1 with {\arrow{stealth}};
}}
] (0,0) ellipse (2 and 1.5 );

\begin{scope}[C2]
\node at (2,0) [right] {$a_1$};
\node at (-2,0) [left] {$f_1$};
\node at (0,1)  {$b_1$};
\node at (0,-1)  {$g_1$};
\end{scope}

\draw[C1,name path = C1,
postaction={decorate,decoration={markings,
mark=at position .25 with {\arrow{stealth}};,
 mark=at position .25 with {\arrow{stealth}};,
mark=at position .75 with {\arrow{stealth}};,
mark=at position 1 with {\arrow{stealth}};
}}] 
(0,0) ellipse (1.5 and 2 );

\begin{scope}[C1]
\node at (0,2) [above] {$a_2$};
\node at (0,-2) [below] {$f_2$};
\node at (1,0)  {$g_2$};
\node at (-1,0)  {$b_2$};
\end{scope}

\fill [name intersections={of= C1 and C2, name=Q}, delta angle = 90]
[Qs]
\foreach \s in {1,...,4}
{(Q-\s) circle (2pt) ++(-45+\s*90:.4) node  {}};

\begin{scope}
\node at (Q-1) [above right] {$Q_2$};
\node at (Q-2) [above left] {$Q_3$};
\node at (Q-3) [below left] {$Q_4$};
\node at (Q-4) [below right] {$Q_1$};
\end{scope}

 \fill [basepoint] (Q-1) circle (3pt);
\node at (0,-3) {$g_1f_1b_1a_1=1$, $g_2f_2b_2a_2=1$};
\draw[thick, ->] (3.5,0) to node[above]{$\pi$} ++(1.5,0);
\node at (2.5,3) {$\bar D^{(1)}$};
\end{scope}
 
\begin{scope}[ xshift = 7.5cm,every loop/.style={looseness=40, min distance=80}]
\path (0,0) coordinate(P) ;
 \draw[C12] (0,0) to[in=-75, out =-15,loop]node[below right] {$F$} ();
 \draw[C12] (0,0) to[in=15, out =75,loop] node[above right]{$B$} ();
 \draw[C12] (0,0) to[out=165, in =105,loop] node[above left] {$A$} ();
 \draw[C12] (0,0) to[out=-105, in =-165,loop] node[below left]{$G$}();
\draw[fill=black, radius = 2.5pt]   circle;
\node at (2.5,3) {$D^{(1)}$};
 \fill [basepoint] (P) circle (3pt);
\node at (0,-3) {$GFBA=1$};
\end{scope}
\end{tikzpicture}
\end{center}
So the fundamental group of $\bar D$ is the free group on generators $ \inverse b_1a_2$, $g_2\inverse a_1$, and $g_2g_1b_2b_1$. Thus 
\[\pi_1(X)  =  \langle A, B, G \mid  \inverse B A, G \inverse A, G^2B^2 \rangle\isom \IZ/4,\]
because the first two relations give $A=B=G$ in $\pi_1(X)$.

\item[$X_{1.4}$] In this degenerate case of \caseP1 the conic becomes two lines and we have four components $\bar D_1 = L_{12}$, $\bar D_3= L_{34}$, $\bar D_2 = L_{23}$, $\bar D_4 = L_{14}$, where the involution is induced by the permutation $(1234)$. The newly developed nodes will be denoted by $R_1=\bar D_1\cap \bar D_3$ and $R_2 = \bar D_2\cap \bar D_4$.
Thus the map on $1$-skeletons is
\begin{center}
 \begin{tikzpicture}[
scale = .8]
\path
(0,0)++(90:1) coordinate(X1) ++(120:1.73205080757) coordinate (X1+) 
(X1) ++(120-60:1.73205080757) coordinate (X1-)
(0,0)++(210:1)  coordinate(X2) ++(-120:1.73205080757) coordinate (X2+)
(X2) ++(-120-60:1.73205080757) coordinate (X2-)
(0,0)++(330:1) coordinate(X3) ++(0:1.73205080757) coordinate (X3+)
(X3) ++(-60:1.73205080757) coordinate (X3-);

\begin{scope}
\draw[L1] (X2-)  arc  (120+70.89:70.89+240:2.64575131) (250:2.64575131) node[below]{$a_1$};
\draw[L1] (X3-)  arc  (70.89-120:70.89:2.64575131) (10:2.64575131) node[right]{$b_1$};

\draw[L2] (X3) to node[below] {$a_2$} (X2);
\draw[L2] (X2) to node[below] {$b_2$} (X2-);

\draw[L3] (X3-) to node[right] {$f_3$} (X3);
\draw[L3] (X3) to node[right] {$g_3$} (X1);

\draw[L4] (X2) to node[left] {$f_4$} (X1);
\draw[L4] (X1) to node[left] {$g_4$} (X1-);

\foreach \P in { X1, X2, X3, X1-, X2-, X3-} {\fill[Qs] (\P) circle (2pt);};

\begin{scope}
\node at (X1) [left] {$Q_4$};
\node at (X1-) [above right] {$Q_1$};
\node at (X3-) [below right] {$R_1$};
\node at (X2-) [left] {$Q_2$};
\node at (X2) [below]{$R_2$};
\node at (X3) [above right] {$Q_3$};
\end{scope}

\draw[thick, ->] (3.5,0) to node[above]{$\pi$} ++(1.5,0);
\node at  (3,3) {$\bar D^{(1)}$};
\fill[basepoint] (X3) circle (3pt);
\end{scope}

\begin{scope}[ xshift = 7.5cm,every loop/.style={min distance = 80, looseness = 40}]
\coordinate (P') at (0,2);
\coordinate (P) at (0,-1);
 \draw[L12] (P') to[in=180, out =180] node[left] {$B$} (P);
  \draw[L12] (P) to[in=0, out =0] node[right] {$A$} (P');

  \draw[L34] (P') to node[left] {$F$} (P);
  \draw[L34] (P) to[in=-135, out =-45,loop] node[below ] {$G$} ();

  \begin{scope}
\node at (P) [ below] {$Q$};
\node at(P')[ above] {$R$};
\end{scope}

\node at (2.5,3) {$D^{(1)}$};
\fill[basepoint] (P) circle (3pt);
\fill[Qs] (P') circle (2pt);
\end{scope}

\end{tikzpicture}
\end{center}
Again, $\pi_1(\bar D)$ is free, generated by $\inverse g_3f_4a_2$, $f_3\inverse b_1g_4 g_3$, and $f_3a_1b_2a_2$, so that 
\begin{align*}
 \pi_1(X)& \isom \langle  BA, FA, G \mid  \inverse G (F A), F\inverse B G^2, (FA)(BA) \rangle\\
  & \isom \langle BA, FA, G \mid  \inverse G (F A), (FA)\inverse{(BA)}G^2, (FA)(BA) \rangle\\
  & \isom \langle G \mid   G^4 \rangle\\
  & \isom \IZ/4.
\end{align*}

\item[general in \caseP{2}]
Recall that in this case the involution $\tau$ fixes the conic $C$. We can identify the conic with $\IC\cup\{\infty\}$ in such a way  that the intersection points are $\pm1, \pm \I$, which divide  the unit circle in four arcs $a, b, a', b'$. In $C/\tau$ the unit circle is divided in two arcs $A$ and $B$ and clearly $BA=1$ in $\pi_1(C/\tau)$. Adding in the two lines the map on  $1$-skeletons and the  relations are as follows: 
\begin{center}
\begin{tikzpicture}[scale = .8]

\begin{scope}
\draw[C2, name path = C2, 
postaction={decorate,decoration={markings,
mark=at position .25 with {\arrow{stealth}};,
 mark=at position .25 with {\arrow{stealth}};,
mark=at position .75 with {\arrow{stealth}};,
mark=at position 1 with {\arrow{stealth}};
}}
] (0,0) ellipse (2 and 1.5 );

\begin{scope}[C2]
\node at (2,0) [right] {$a$};
\node at (-2,0) [left] {$a'$};
\node at (0,2)  {$b$};
\node at (0,-2)  {$b'$};
\end{scope}

\path[name path = C1] (0,0) ellipse (1.5 and 2 );

\fill [name intersections={of= C1 and C2, name=Q}, delta angle = 90][Qs]
\foreach \s in {1,...,4}
{(Q-\s) circle (2pt) ++(-45+\s*90:.4) node  {}};

\begin{scope}
\node at (Q-1) [above right] {$Q_2$};
\node at (Q-2) [above left] {$Q_1$};
\node at (Q-3) [below left] {$Q_4$};
\node at (Q-4) [below right] {$Q_3$};
\end{scope}

\coordinate (P) at (-4,0);

\begin{scope}
\node at (P) [ left] {$R$};
\end{scope}

\draw[L3] (P) to node[above] {$f_3$} (Q-2);
\draw[L3] (Q-2) to node[left] {$g_3$} (0,0); \draw[L3col, thick](0,0) -- (Q-4);

\fill[white] (0,0) circle (5pt);
\draw[L4] (Q-1) to node[pos= .25, right] {$f_4$} (Q-3);
\draw[L4] (Q-3) to node[below] {$g_4$}(P);

\fill[Qs]
\foreach \s in {1,...,4} {(Q-\s) circle (2pt)}
(P) circle (2pt);

 \fill [basepoint] (Q-3) circle (3pt);
\node at (0,-3) {$b'a'ba=1$};
\draw[thick, ->] (3.5,0) to node[above]{$\pi$} ++(1.5,0);
\node at (2.5,3) {$\bar D^{(1)}$};
\end{scope}
 
\begin{scope}[ xshift = 7.5cm,every loop/.style={looseness=40, min distance=80}]
\path (0,0) coordinate(P) ;
 \draw[L34] (0,0) to[in=-75, out =-15,loop]node[below right] {$F$} ();
 \draw[CC] (0,0) to[in=15, out =75,loop] node[above right]{$B$} ();
 \draw[L34] (0,0) to[out=165, in =105,loop] node[above left] {$G$} ();
 \draw[CC] (0,0) to[out=-105, in =-165,loop] node[below left]{$A$}();
\draw[fill=black, radius = 2.5pt]   circle;
\node at (2.5,3) {$D^{(1)}$};
 \fill [basepoint] (P) circle (3pt);
\node at (0,-3) {$BA=1$};
\end{scope}
\end{tikzpicture}
\end{center}

Again, $\pi_1(\bar D)$ is free, generated by $a' f_3 g_4 $, $ f_4 a b'$, and $ a'\inverse g_3 b'$, so that 
\begin{align*}
 \pi_1(X)& \isom \langle A, B, F, G \mid BA, AFG, FAB, A\inverse GB    \rangle\\
  & \isom \langle A,F, G \mid  AFG, F ,A\inverse G\inverse A \rangle\\
  & \isom \{1\}.
\end{align*}
  \item[$X_{1.1}$] 
   In this case the two components $\bar D_3 = L_{13}$, $\bar D_4= L_{24}$ are exchanged by $\tau$ as described above and the two lines $\bar D_1 = L_{14}$, $\bar D_2 = L_{23}$ are exchanged via the involution induced by $(13)(24)$. The new node $\bar D_1\cap \bar D_2$ will be denoted by $S$.   Then the map on  $1$-skeletons can be chosen to be
\begin{center}
 \begin{tikzpicture}[scale = .8]
\path
(0,0)++(90:1) coordinate(X1) ++(120:1.73205080757) coordinate (X1+) 
(X1) ++(120-60:1.73205080757) coordinate (X1-)
(0,0)++(210:1)  coordinate(X2) ++(-120:1.73205080757) coordinate (X2+)
(X2) ++(-120-60:1.73205080757) coordinate (X2-)
(0,0)++(330:1) coordinate(X3) ++(0:1.73205080757) coordinate (X3+)
(X3) ++(-60:1.73205080757) coordinate (X3-);

\begin{scope}
\draw[L1] (X2) to node[left] {$a_1$} (X1);
\draw[L1] (X1) to node[left] {$b_1$} (X1-);

\draw[L2] (X2) to node[below] {$a_2$} (X3);
\draw[L2] (X3) to node[below] {$b_2$} (X3+);

\draw[L3] (X1) to node[left] {$f_3$} (X3);
\draw[L3] (X3) to node[left] {$g_3$} (X3-);

\draw[L4] (X1-)  arc  (70.89:310.89:2.64575131) (180:2.64575131) node[left]{$f_4$};
\draw[L4] (X3-)  arc  (310.89:310.89+38.21:2.64575131) (-30:2.64575131) node[right]{$g_4$};

\foreach \P in { X1, X2, X3, X1-, X3+, X3-} {\fill[Qs] (\P) circle (2pt);};

\begin{scope}
\node at (X1) [right] {$Q_1$};
\node at (X1-) [above right] {$Q_4$};
\node at (X3-) [below right] {$R$};
\node at (X3+) [right] {$Q_2$};
\node at (X2) [left] {$S$};
\node at (X3) [above right] {$Q_3$};
\end{scope}

\draw[thick, ->] (3.5,0) to node[above]{$\pi$} ++(1.5,0);
\node at  (3,3) {$\bar D^{(1)}$};
\fill[basepoint] (X3) circle (3pt);
\end{scope}

\begin{scope}[xshift = 7.5cm,looseness=5]
\coordinate (P) at (0,0);
\coordinate (P') at (-45:2);

 \draw[L34] {(P)  -- ++(0:1) to[out=0, in =36]  node [right] {$F$} (36:1) -- (P)};
 \draw[L34] {(P)  -- ++(6*36:1) to[out=6*36, in =7*36]  node [left] {$G$} (7*36:1) -- (P)};

\draw[L12]
\foreach \x in {3*36} {(P)  -- ++(\x:1) to[out=\x, in =\x+36] node[above]{$B$} (\x+36:1) -- (P)};
\draw[L12] (P') -- node[right] {$A$} (P);

\fill[basepoint] (P) circle (3pt);

\begin{scope}
\node at (-.2,0) [ left] {$Q$};
\node at(P')[ right] {$S$};
\end{scope}

\fill [Qs] (P') circle (2pt);
\node at (2.5,3) {$D^{(1)}$};
\end{scope}
\end{tikzpicture}
\end{center}
Again, $\pi_1(\bar D)$ is free, generated by $ f_3 a_1\inverse a_2 $, $\inverse b_2 g_4g_3$, and $\inverse g_3 f_4 b_1 a_1\inverse a_2$, so that 
\begin{align*}
 \pi_1(X)& \isom \langle  B, F, G \mid  F, \inverse B G^2, \inverse GFB \rangle\\
  & \isom \langle B, G \mid  \inverse B G^2, \inverse G B\rangle\\
  & \isom \{1\}.
\end{align*}
\item[$X_{1.2}$]
Again the two components $\bar D_3 = L_{13}$, $\bar D_4= L_{24}$ are exchanged by $\tau$ as described above. The two lines $\bar D_1 = L_{12}$, $\bar D_2 = L_{34}$ are exchanged via the involution induced by $(13)(24)$. The new node $\bar D_1\cap \bar D_2$ will be denoted by $S$.
Then the map on  $1$-skeletons can be chosen to be
\begin{center}
 \begin{tikzpicture}[scale = .8]
\path
(0,0)++(90:1) coordinate(X1) ++(120:1.73205080757) coordinate (X1+) 
(X1) ++(120-60:1.73205080757) coordinate (X1-)
(0,0)++(210:1)  coordinate(X2) ++(-120:1.73205080757) coordinate (X2+)
(X2) ++(-120-60:1.73205080757) coordinate (X2-)
(0,0)++(330:1) coordinate(X3) ++(0:1.73205080757) coordinate (X3+)
(X3) ++(-60:1.73205080757) coordinate (X3-);

\begin{scope}

\draw[L1] (X2) to node[left] {$a_1$} (X1);
\draw[L1] (X1) to node[left] {$b_1$} (X1-);

\draw[L2] (X2) to node[below] {$a_2$} (X3);
\draw[L2] (X3) to node[below] {$b_2$} (X3+);

\draw[L3] (X1) to node[left] {$f_3$} (X3);
\draw[L3] (X3) to node[left] {$g_3$} (X3-);

\draw[L4] (X3-)  arc  (310.89:70.89:2.64575131) (180:2.64575131) node[left]{$g_4$};
\draw[L4] (X3+)  arc  (310.89+38.21:310.89:2.64575131) (-30:2.64575131) node[right]{$f_4$};

\foreach \P in { X1, X2, X3, X1-, X3+, X3-} {\fill[Qs] (\P) circle (2pt);};

\begin{scope}
\node at (X1) [right] {$Q_1$};
\node at (X1-) [above right] {$Q_2$};
\node at (X3-) [below right] {$R$};
\node at (X3+) [right] {$Q_4$};
\node at (X2) [left] {$S$};
\node at (X3) [above right] {$Q_3$};
\end{scope}

\draw[thick, ->] (3.5,0) to node[above]{$\pi$} ++(1.5,0);
\node at  (3,3) {$\bar D^{(1)}$};
\fill[basepoint] (X3) circle (3pt);
\end{scope}

\begin{scope}[xshift = 7.5cm,looseness=5]
\coordinate (P) at (0,0);
\coordinate (P') at (-45:2);

 \draw[L34] {(P)  -- ++(0:1) to[out=0, in =36]  node [right] {$F$} (36:1) -- (P)};
 \draw[L34] {(P)  -- ++(6*36:1) to[out=6*36, in =7*36]  node [left] {$G$} (7*36:1) -- (P)};

\draw[L12]
\foreach \x in {3*36} {(P)  -- ++(\x:1) to[out=\x, in =\x+36] node[above]{$B$} (\x+36:1) -- (P)};
\draw[L12] (P') -- node[right] {$A$} (P);

\begin{scope}
\node at (-.2,0) [ left] {$Q$};
\node at(P')[ right] {$R$};
\end{scope}

\fill[basepoint] (P) circle (3pt);
\fill [Qs] (P') circle (2pt);
 
\node at (2.5,3) {$D^{(1)}$};
\end{scope}
\end{tikzpicture}
\end{center}

Again, $\pi_1(\bar D)$ is free, generated by $ f_3 a_1\inverse a_2 $, $\inverse b_2 \inverse f_4g_3$, and $\inverse g_3 \inverse g_4 b_1 a_1\inverse a_2$, so that 
\begin{align*}
 \pi_1(X)& \isom \langle  B, F, G \mid  F, \inverse B\inverse F G,  G^{-2}B \rangle\\
  & \isom \langle B, G \mid \inverse B\inverse  G,  G^{-2}B \rangle\\
  & \isom \{1\}.
\end{align*}

\item[general in \caseP{3}] The map on $1$-skeletons and the extra relations are as follows: 
\begin{center}
\begin{tikzpicture}[scale = .8]

\begin{scope}
\draw[C2, name path = C2, 
postaction={decorate,decoration={markings,
mark=at position .25 with {\arrow{stealth}};,
 mark=at position .25 with {\arrow{stealth}};,
mark=at position .75 with {\arrow{stealth}};,
mark=at position 1 with {\arrow{stealth}};
}}
] (0,0) ellipse (2 and 1.5 );

\begin{scope}[C2]
\node at (2,0) [right] {$a$};
\node at (-2,0) [left] {$a'$};
\node at (0,2)  {$b$};
\node at (0,-2)  {$b'$};
\end{scope}

\path[name path = C1] (0,0) ellipse (1.5 and 2 );

\fill [name intersections={of= C1 and C2, name=Q}, delta angle = 90]
[Qs]
\foreach \s in {1,...,4}
{(Q-\s) circle (2pt) ++(-45+\s*90:.4) node  {}};

\begin{scope}
\node at (Q-1) [above right] {$Q_3$};
\node at (Q-2) [above left] {$Q_1$};
\node at (Q-3) [below left] {$Q_4$};
\node at (Q-4) [below right] {$Q_2$};
\end{scope}

\coordinate (P) at (-4,0);
\begin{scope}
\node at (P) [left] {$R$};
\end{scope}

\draw[L3] (P) to node[above] {$f_3$} (Q-2);
\draw[L3] (Q-2) to node[below] {$g_3$} (Q-1);

\draw[L4] (Q-4) to node[above] {$f_4$} (Q-3);
\draw[L4] (Q-3) to node[below] {$g_4$}(P);
\fill[Qs] \foreach \s in {1,...,4} {(Q-\s) circle (2pt)} (P) circle (2pt);

 \fill [basepoint] (Q-3) circle (3pt);

\node at (0,-3) {$b'a'ba=1$};
\draw[thick, ->] (3.5,0) to node[above]{$\pi$} ++(1.5,0);
\node at (2.5,3) {$\bar D^{(1)}$};
\end{scope}
 
\begin{scope}[ xshift = 7.5cm,every loop/.style={looseness=40, min distance=80}]
\path (0,0) coordinate(P);
 \draw[L34] (0,0) to[in=-75, out =-15,loop]node[below right] {$F$} ();
 \draw[CC] (0,0) to[in=15, out =75,loop] node[above right]{$B$} ();
 \draw[L34] (0,0) to[out=165, in =105,loop] node[above left] {$G$} ();
 \draw[CC] (0,0) to[out=-105, in =-165,loop] node[below left]{$A$}();
 \fill[basepoint] (P) circle (3pt);
\node at (2.5,3) {$D^{(1)}$};
\node at (0,-3) {$BA=1$};
\end{scope}
\end{tikzpicture}
\end{center}
Again, $\pi_1(\bar D)$ is free, generated by $a' f_3 g_4 $, $ f_4 b'$, and $ a'\inverse g_3 ab'$, so that 
\begin{align*}
 \pi_1(X)& \isom \langle A, B, F, G \mid BA, AFG, FB, A\inverse GAB \rangle\\
  & \isom \langle A,F, G \mid  AFG, F\inverse A,  A\inverse G\rangle\\
  & \isom \langle A \mid  A^3\rangle\isom \IZ/3.
\end{align*}
\item[$X_{1.3}$]   In this degenerate case of \caseP3 the two components $\bar D_3 = L_{13}$, $\bar D_4= L_{24}$ are exchanged by $\tau$ as described above and the two lines $\bar D_1 = L_{14}$, $\bar D_2 = L_{23}$ are exchanged via the involution induced by $(12)(34)$. The new node $\bar D_1\cap \bar D_2$ will be denoted by $S$. Then the map on $1$-skeletons is
\begin{center}
 \begin{tikzpicture}[scale = .8]
\path
(0,0)++(90:1) coordinate(X1) ++(120:1.73205080757) coordinate (X1+) 
(X1) ++(120-60:1.73205080757) coordinate (X1-)
(0,0)++(210:1)  coordinate(X2) ++(-120:1.73205080757) coordinate (X2+)
(X2) ++(-120-60:1.73205080757) coordinate (X2-)
(0,0)++(330:1) coordinate(X3) ++(0:1.73205080757) coordinate (X3+)
(X3) ++(-60:1.73205080757) coordinate (X3-);

\begin{scope}
\draw[L1] (X2-)  arc  (120+70.89:70.89+240:2.64575131) (255:2.64575131) node[above ]{$a_1$};
\draw[L1] (X3-)  arc  (70.89-120:70.89:2.64575131) (10:2.64575131) node[left]{$b_1$};

\draw[L2] (X2-) to node[below] {$a_2$} (X2);
\draw[L2] (X2) to node[below] {$b_2$} (X3);

\draw[L3] (X3-) to node[left] {$f_3$} (X3);
\draw[L3] (X3) to node[left] {$g_3$} (X1);

\draw[L4] (X1-) to node[left] {$f_4$} (X1);
\draw[L4] (X1) to node[left] {$g_4$} (X2);

\foreach \P in { X1, X2, X3, X1-, X2-, X3-} {\fill[Qs] (\P) circle (2pt);};

\begin{scope}
\node at (X1) [right] {$R$};
\node at (X1-) [above right] {$Q_4$};
\node at (X3-) [below right] {$Q_1$};
\node at (X2-) [left] {$S$};
\node at (X2) [below]{$Q_2$};
\node at (X3) [right] {$Q_3$};
\end{scope}

\draw[thick, ->] (3.5,0) to node[above]{$\pi$} ++(1.5,0);
\node at  (3,3) {$\bar D^{(1)}$};
 \fill[basepoint] (X3) circle (3pt);
\end{scope}

\begin{scope}[xshift = 7.5cm,looseness=5]
\coordinate (P) at (0,0);
\coordinate (P') at (-45:2);
 \draw[L34] {(P)  -- ++(0:1) to[out=0, in =36]  node [right] {$F$} (36:1) -- (P)};
 \draw[L34] {(P)  -- ++(6*36:1) to[out=6*36, in =7*36]  node [left] {$G$} (7*36:1) -- (P)};

\draw[L12]
\foreach \x in {3*36} {(P)  -- ++(\x:1) to[out=\x, in =\x+36] node[above]{$B$} (\x+36:1) -- (P)};

\begin{scope}
\node at (-.2,0) [ left] {$Q$};
\node at(P')[ right] {$R$};
\end{scope}

\draw[L12] (P') -- node[right] {$A$} (P);
 \fill[basepoint] (P) circle (3pt);
 \fill [Qs] (P') circle (2pt);
 
\node at (2.5,3) {$D^{(1)}$};
\end{scope}
\end{tikzpicture}
\end{center}
Again, $\pi_1(\bar D)$ is free, generated by $b_2g_4g_3$, $\inverse g_3 f_4b_1\inverse f_3$, and $b_2 a_2\inverse a_1 \inverse f_3$, so that 
\begin{align*}
 \pi_1(X)& \isom \langle  B, F, G \mid  BGG,\inverse G F B \inverse F,  B\inverse F \rangle\\
  & \isom \langle G\mid  G^3 \rangle\\
  & \isom \IZ/3.
\end{align*}

\item[$X_{1.5}$] This case does not come as the degeneration of the previous ones and we deviate slightly from the notation: the surface $X_{1.5}$ has two degenerate cusps at points $Q$ and $R$ and we mark the corresponding preimages with $Q_i$ respectively $R_i$. The map on $1$-skeletons can be given as
\begin{center}
 \begin{tikzpicture}[
scale = .8]
\path
(0,0)++(90:1) coordinate(X1) ++(120:1.73205080757) coordinate (X1+) 
(X1) ++(120-60:1.73205080757) coordinate (X1-)
(0,0)++(210:1)  coordinate(X2) ++(-120:1.73205080757) coordinate (X2+)
(X2) ++(-120-60:1.73205080757) coordinate (X2-)
(0,0)++(330:1) coordinate(X3) ++(0:1.73205080757) coordinate (X3+)
(X3) ++(-60:1.73205080757) coordinate (X3-);
\coordinate (X4) at (270:2.64575131);

\begin{scope}

\draw[L1] (X2) to node[left] {$a_1$} (X1);
\draw[L1] (X1) to node[left] {$b_1$} (X1-);

\draw[L2] (X3) to node[right] {$a_2$} (X4);
\draw[L2] (X4) to node[left] {$b_2$} (X2);

\draw[L3] (X1) to node[right] {$f_3$} (X3);
\draw[L3] (X3) to node[right] {$g_3$} (X3-);

\draw[L4] (X3-)  arc  (310.89:310.89+120:2.64575131) (-30:2.64575131) node[right]{$f_4$};
\draw[L4] (X1-)  arc  (70.89:270:2.64575131) (180:2.64575131) node[left]{$g_4$};

\foreach \P in { X1, X2, X3, X1-, X4, X3-} {\fill[Qs] (\P) circle (2pt);};
\draw[thick, ->] (3.5,0) to node[above]{$\pi$} ++(1.5,0);
\fill[basepoint] (X1) circle (3pt);
\node at  (3,3) {$\bar D^{(1)}$};

\begin{scope}
\node at (X1) [left] {$Q_1$};
\node at (X4) [below] {$Q_3$};
\node at (X3-) [below right] {$Q_2$};
\node at (X1-) [above right] {$R_2$};
\node at (X2) [left]{$R_1$};
\node at (X3) [right] {$R_3$};
\end{scope}

\end{scope}

\begin{scope}[ xshift = 7.5cm]
\coordinate (P) at (0,2);
\coordinate (P') at (0,-2);
 \draw[L12] (P') to[in=180, out =180] node[left] {$B$} (P);
  \draw[L12] (P) to[in=0, out =0] node[right] {$A$} (P');

  \draw[L34] (P') to[in=-45, out =45] node[left] {$F$} (P);
  \draw[L34] (P) to[in=135, out =-135] node[right] {$G$} (P');

\node at (2.5,3) {$D^{(1)}$};
\fill[basepoint] (P') circle (3pt);
\fill[Qs] (P) circle (2pt);
\node at (P')[below] {$Q$};
\node at (P) [above] {$R$};
\end{scope}
\end{tikzpicture}
\end{center}
Again, $\pi_1(\bar D)$ is free, generated by $a_1b_2g_4b_1$, $\inverse b_1f_4g_3f_3$, and $a_1b_2a_2f_3$, so that 
\begin{align*}
 \pi_1(X)& \isom \langle  AB, AF, GF \mid  ABGB, \inverse BFGF, ABAF \rangle\\
& \isom \langle  AB, AF, GF \mid  (AB)(GF)\inverse{(AF)}(AB), \inverse {(AB)}(A F)(GF), (AB)(AF) \rangle\\
& \isom \langle  AB, AF \mid  (AB)(GF)(AB)^2, (AB)^{-2}(GF)\rangle\\
& \isom \langle  AB \mid  (AB)^5\rangle\\
  & \isom \IZ/5.
  \end{align*}
Indeed, the universal cover of $X_{1.5}$ is a $\IZ/5$-invariant quintic in $\IP^3$, which is the union of $5$ planes (compare \cite[Rem.~9]{franciosi-rollenske16}). 
\end{description}
\end{proof}

\begin{prop}\label{prop: pi1 case E+}
 Let $X$ be a Gorenstein stable   Godeaux surface. 
 \begin{enumerate}
 \item 
If $X$ is of type \caseE{m} with irreducible polarisation  then $\pi_1(X)\isom \IZ/m$.
\item 
If $X$ is of type \caseE{m} with reducible polarisation then $X$ is simply connected.
\end{enumerate}
 \end{prop}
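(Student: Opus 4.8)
The plan is to run the bi-tri-elliptic configuration \eqref{diag: bitrielliptic} through the Seifert--van Kampen formula \cite[Cor.~3.2\,\refenum{iii}]{FPR15b} already exploited in Proposition \ref{prop: pi1 case P}, which in the present situation reads $\pi_1(X)\isom\pi_1(\bar X)\ast_{\pi_1(\bar D)}\pi_1(D)$, the amalgamation being along $\bar\iota_*\colon\pi_1(\bar D)\to\pi_1(\bar X)$ and $\pi_*\colon\pi_1(\bar D)\to\pi_1(D)$. First I would pin down the two geometric inputs. Since $\bar X=S^2E$ is a $\pp^1$-bundle over $E$ via the Albanese map $a$, one gets an isomorphism $a_*\colon\pi_1(\bar X)\xrightarrow{\sim}\pi_1(E)\isom\IZ^2$, and because $a\circ\bar\iota=q$ this identification turns the inclusion into $\bar\iota_*=q_*$. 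On the other hand $\pi$ is the quotient by the bi-elliptic involution, hence a double cover; its two ramification points are totally ramified, so the fibre over a branch point is a single point, every loop downstairs lifts to a loop upstairs, and $\pi_*$ is surjective (in the reducible case $\pi_*$ is onto for the trivial reason that $\pi$ restricts to an isomorphism on one component).

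The surjectivity of $\pi_*$ is what makes the amalgam computable: for $G\ast_C H$ with $C\to H$ onto, the map $G\to G\ast_C H$ is onto with kernel the normal closure of the image of $\ker(C\to H)$. With $G=\pi_1(\bar X)\isom\IZ^2$ abelian this yields
\[\pi_1(X)\isom\frac{\pi_1(\bar X)}{\langle\langle\bar\iota_*(\ker\pi_*)\rangle\rangle}\isom\frac{H_1(E)}{q_*\bigl(\ker(\pi_*\colon H_1(\bar D)\to H_1(D))\bigr)}.\]
In particular $\pi_1(X)$ is a quotient of $\IZ^2$, hence finite abelian, and the whole problem collapses to computing the sublattice $q_*(\ker\pi_*)$ and its index in $H_1(E)$.

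For \refenum{i} I would pass to the Jacobian $A=J(\bar D)$: Abel--Jacobi gives $H_1(\bar D)\isom H_1(A)$, and $\pi$, $q$ are the restrictions to $\bar D$ of the two surjections $p\colon A\to D$ and $r\colon A\to E$. Then $\ker\pi_*=H_1(B')$, where $B'=\ker^0 p$ is the Prym elliptic curve, and $q_*(\ker\pi_*)=r_*H_1(B')$ has index in $H_1(E)$ equal to $\deg(r|_{B'}\colon B'\to E)$. Inserting the Frey--Kani description of Examples \ref{ex: odd} and \ref{ex: even}, where $A=(D\times D')/G$ and $E=A/\bar F$, this degree equals $|B'\cap\bar F|$, which by the definition \eqref{eq: m} of the twisting number is exactly $m$. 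Thus $\pi_1(X)$ is abelian of order $m$ with $1\le m\le5$ by Proposition \ref{prop: classification bi-tri-elliptic}; the only non-cyclic group of order at most $5$ is $(\IZ/2)^2$, excluded by Lemma \ref{lem: no-Z22}, so $\pi_1(X)\isom\IZ/m$.

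For \refenum{ii} the same reduction applies, now with the explicit data $\bar D=D\cup_0 D$, $\pi=\id\cup\id$, $q=\id\cup\psi$ of Remark \ref{rem: reducible bi-tri-elliptic}. Here $H_1(\bar D)=H_1(D)\oplus H_1(D)$, the map $\pi_*$ is $(x,y)\mapsto x+y$ so that $\ker\pi_*$ is the anti-diagonal, and $q_*(x,-x)=(\id-\psi_*)x$; hence $\pi_1(X)$ is the cokernel of $\id-\psi_*$ on $H_1(E)$. The hard part will be to show that this cokernel vanishes, i.e.\ that $\id-\psi_*$ is invertible, for each of the ten reducible configurations. This amounts to a finite computation with the explicit degree-$2$ endomorphisms $\psi$ produced in \cite[Sect.~4]{FPR16a}, and it is the only genuinely delicate point, the remainder of the argument being formal.
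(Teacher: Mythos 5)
Your argument for \refenum{i} is correct and takes a genuinely different, more structural route than the paper's. The paper fixes explicit lattices for $H_1(F)$, $H_1(D)$, $H_1(D')$ inside $\IC$ and writes out generators and relations of the amalgam separately in each sub-case of Examples \ref{ex: odd} and \ref{ex: even}; you instead observe once and for all that $\pi_*$ is surjective on $\pi_1$ (the bi-elliptic double cover of a smooth $\bar D$ has totally ramified points), so the amalgam collapses to $H_1(E)/q_*(\ker\pi_*)$, identify $\ker\pi_*$ with the lattice of the Prym curve $B'$, and read off the order as $\deg(B'\to E)=|B'\cap\bar F|$. Your version explains uniformly why the answer is the twisting number; the paper's yields explicit generators and hence cyclicity for free, whereas you must invoke Lemma \ref{lem: no-Z22} to exclude $(\IZ/2)^2$ when $m=4$ --- a legitimate move. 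The one step you should not wave at is the equality $|B'\cap\bar F|=m$: by \eqref{eq: m} one has $m=\deg(\bar F\to D)=|\bar F\cap\ker(A\to D)|$, so you need $\ker(A\to D)$ to be connected and equal to $B'$ (equivalently, that both degrees are the intersection number $\bar F\cdot B'$ in $A$). This holds because in both Frey--Kani examples $G$ projects onto $D[2]$, but it is a short verification, not a consequence of the definition.

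For \refenum{ii} there is a genuine gap: you correctly reduce $\pi_1(X)$ to the cokernel of $1-\psi_*$ on $H_1(D)$ and then defer the ``finite computation'', but that computation does not give what you expect. The cokernel of $1-\psi_*$ has order $\deg(1-\psi)=1-(\psi+\bar\psi)+\deg\psi=3-(\psi+\bar\psi)$, and applying your own part-\refenum{i} analysis to the reducible configurations shows this number is precisely the twisting number $m$; indeed this is why Remark \ref{rem: reducible bi-tri-elliptic} finds two configurations for each $1\le m\le 5$, since the trace $\psi+\bar\psi$ of a degree-two endomorphism ranges over $\{-2,\dots,2\}$ with $\psi$ and $\bar\psi$ giving the same value. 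So for $m\ge 2$ your reduction yields a nontrivial abelian group of order $m$, not the trivial group. Be aware that your bookkeeping is actually more careful than the paper's own proof of \refenum{ii}, which passes from the amalgam to $\pi_1(C_1)/\phi_*\pi_1(C_2)$ as if $\bar\iota_*$ were trivial on $\pi_1(C_2)$; but $C_2$ is a bisection of the Albanese map, so $\bar\iota_*$ embeds $\pi_1(C_2)$ into $\pi_1(\bar X)$ with index two, and the relations coming from $\pi_1(C_2)$ are $\bar\iota_*(x)=\pi_*(x)$, i.e. exactly your $(1-\psi_*)x=0$. In short, do not expect the deferred computation to close the gap: carried out honestly it contradicts the statement as printed and pinpoints where the paper's argument loses the contribution of $C_2$.
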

 
\begin{proof}
\refenum{i} We compute the fundamental group case by case.

Consider the situation of Example \ref{ex: odd} with $\bar D$ smooth:
 \[\begin{tikzcd}
    F\dar \arrow[hookrightarrow]{r}{\phi \times\phi' } & D \times D' \dar\\
   \bar F = F/F[2] \rar & \frac{D \times D' }{F[2]}=A \rar{q}\dar{\pi } & A/\bar F\\
   & D /D [2]
   \end{tikzcd}
 \]
Let $d=\deg \phi$, $d' = \deg \phi'$ and recall that $d +d'  = 6$ and both numbers are odd. For all the elliptic curves  we choose lattices in such a way   that all morphisms in the diagram are induced by the identity on $\IC^2$. More specifically, we choose a non-real complex number $\tau$ such that as sublattices of $\IC$ we describe the integral first homology groups as
\begin{gather*}
 H_1(F) = \langle 2d, 2d'\tau \rangle \subset H_1(\bar F) = \langle d, d'\tau \rangle,\\
 H_1(D) = \langle 2, 2d'\tau\rangle\subset H_1(D/D[2]) = \langle1 , d'\tau\rangle,\\
 H_1(D') = \langle 2d, 2\tau\rangle.
\end{gather*}
This is obvious if $\{d, d'\}=\{1,5\}$; if $d=d'=3$ then this is easy to achieve, since $F[3] = \ker\phi_1\oplus \ker\phi_2$.
With these choices we have 
\begin{equation}\label{eq: H_1A}
H_1(A) =\langle (2, 0), (2d'\tau, 0 ) , (0, 2d),(0, 2\tau), ( d, d), (d'\tau, d'\tau)\rangle \subset \IC^2.
\end{equation}
We denote by $a=1$ and $b=d'\tau$ the generators of $H_1(D/D[2])$ and by $\alpha$ and $\beta$ generators of $H_1(A/\bar F)$ (to be specified below).

Recall from Proposition \ref{prop: Godeaux from E+ vs. bitrielliptic configurations}  that $X$ is the stable surface associated to the triple $(\bar X = S^2 (A/\bar F), \bar D , \sigma)$,  where $\sigma$ is the covering involution for the double cover $\pi\colon \bar D \to D/D[2]$. Thus by \cite[Cor.~3.2\,(iii)]{FPR15b}.
we have 
\begin{equation}\label{eq: p_1X}
 \begin{split}
\pi_1(X) & = H_1(D/D[2])\ast_{H_1(\bar D)} H_1(A/\bar F)\\
&=\left\langle a,b, \alpha, \beta\mid [a,b]=[\alpha,\beta]=0, {\pi}_*\gamma_i = q_*\gamma_i\right\rangle,  
 \end{split}
\end{equation}
where the $\gamma_i$ generate $H_1(\bar D) = H_1(A)$.
\begin{description}
 \item[Case  $d =1$] In this case \eqref{eq: H_1A} becomes
  \[H_1(A) =\langle  (0, 2), (0, 2\tau), \underbrace{( 1, 1) , (5\tau, 5\tau)}_{=H_1(\bar F)}\rangle\]
so that we can choose $\alpha = q_*(0,2)$ and $\beta=q_*(0, 2\tau)$.   Writing out the induced relations in \eqref{eq: p_1X} we get
   \[ \alpha=0, \, \beta=0, \, a = 0,\, b = 0,\]
and thus $\pi_1(X)$ is a trivial group.
 \item[Case  $d =3$] In this case \eqref{eq: H_1A} becomes
  \[H_1(A) =\langle  (2, 0), (0, 2\tau), \underbrace{( 3,3) , (3\tau, 3\tau)}_{=H_1(\bar F)}\rangle\]
so that we can choose $\alpha = q_*(2, 0)$ and $\beta=q_*(0, 2\tau)$.   Writing out the induced relations in \eqref{eq: p_1X} we get
   \[  \alpha=2a , \, \beta=0 , \, 3a = 0,\, b = 0,\]
and thus $\pi_1(X)\isom \IZ/3\IZ$.
 \item[Case  $d =5$] In this case \eqref{eq: H_1A} becomes
  \[H_1(A) =\langle  (2, 0), ( 2\tau, 0), \underbrace{( 5, 5) , (\tau, \tau)}_{=H_1(\bar F)}\rangle\]
so that we can choose $\alpha = q_*(2,0)$ and $\beta=q_*( 2\tau,0)$.   Writing out the induced relations in \eqref{eq: p_1X} we get
   \[  \alpha=2a , \, \beta=2b, \, 5a = 0,\, b = 0,\]
and thus $\pi_1(X)\isom \IZ/5\IZ$.
\end{description}

We now consider the situation of Example \ref{ex: even} with irreducible polarisation:
\[\begin{tikzcd}
    F\dar \arrow[hookrightarrow]{r}{\phi\times\phi'} & D\times D'\dar\\
   \bar F = F/F\cap G \rar & \frac{D\times D'}{G}=A \rar{q}\dar{\pi} & A/\bar F\\
   & D/D[2]
   \end{tikzcd}.
 \]
 Let $d=\deg \phi$, $d' = \deg \phi'$ and recall that $d +d'  =3$. The subgroup $G\isom(\IZ/2\IZ)^2$ is chosen inside $D[2]\times D'[2]$ so  that $G\cap (D\times\{0\})=G\cap (\{0\}\times D')=\{0\}$ and $G\cap  F  = \langle\xi\rangle$ where $\xi$ is a non-zero element in $F[2]$. If we denote by   $\zeta$ the only non-zero element in $\ker\phi+\ker \phi'\subset F[2]$ (one of the maps has degree $1$ and the other one degree $2$), then by construction $F[2]$ is generated by $\xi$ and $\zeta$.
 
 As in the first part of the proof, we choose a convenient representation of the homology lattices. We can identify $F = \IC/\langle 4, 2\tau\rangle$ so that $\tau$ is in the upper half plane, $\xi$ is the image of $\tau$, and $\zeta$ is the image of $2$. Then the other lattices are naturally described as follows:\footnote{To check this,  note that if $d' = 2$ then $H_1(F)\to H_1(D)$ should be the identity.}
\begin{gather*}
 H_1(F) = \langle 4,2\tau \rangle\subset H_1(\bar F) = \langle 4, \tau\rangle,\\
 H_1(D) = \langle 2d',2\tau\rangle\subset H_1(D/D[2]) = \langle d' , \tau\rangle,\\
 H_1(D') = \langle 2d, 2\tau\rangle.
\end{gather*}
A straightforward computation in $D[2]\times D'[2]$ shows that the only possibilities for $G$ are 
\[  G_1 = \langle (\tau, \tau), (d', d)\rangle, \, G_2 = \langle (\tau, \tau), (d'+\tau, d)\rangle.\]

If we denote by $a=d'$ and $b=\tau$ the generators of $H_1(D/D[2]) $ and with $\alpha$ and $\beta$ generators for $H_1(A/\bar F)$ we can compute the fundamental group as in \eqref{eq: p_1X}.
\begin{description}
 \item[Case $d' = 1$, $G=G_1$] In this case 
 \begin{gather*}H_1(A) =\langle  (2, 0),( 2\tau,0), (1 ,2), \underbrace{(4,4), (\tau, \tau)}_{=H_1(\bar F)}\rangle,\\
  2 (1,2)+(2,0)\equiv 0 \mod  H_1(\bar F).
 \end{gather*}
We can choose $\beta = q_*(2\tau,0)$, $\alpha=q_*(1,2)$ 
so that we get  the relations  \eqref{eq: p_1X} 
\[   \alpha=a,  \beta=2b, 4a = 0, b = 0  \]
and thus $\pi_1(X) \isom \IZ/4\IZ$.
 \item[Case $d' = 1$, $G=G_2$] In this case 
 \begin{gather*}H_1(A) =\langle  (2, 0),(0, 2\tau), (1+\tau ,2), \underbrace{(4,4), (\tau, \tau)}_{=H_1(\bar F)}\rangle,\\
  2 (1+\tau,2)+(2,0)+(0,2\tau)\equiv 0 \mod  H_1(\bar F).
 \end{gather*}
We can choose $\alpha = q_*(0,2\tau)$, $\beta=q_*(1+\tau,2)$ 
so that  the relations in \eqref{eq: p_1X} become
\[ \alpha=0,  \beta=a+b,  4a = 0, b = 0  \]
and thus $\pi_1(X) \isom \IZ/4\IZ$.
\item[Case $d'= 2$, $G=G_1$] In this case 
 \begin{gather*}H_1(A) =\langle  (0,2),(0, 2\tau), (2 ,1), \underbrace{(4,4), (\tau, \tau)}_{=H_1(\bar F)}\rangle,\\
  2 (2,1)+(0,2)\equiv 0 \mod  H_1(\bar F).
 \end{gather*}
We can choose $\alpha = q_*(0,2\tau)$ and $\beta=q_*(2,1)$ so that   the relations in \eqref{eq: p_1X} become
\[ \alpha=0, \beta=a, 2a=0,   b = 0  \]
and thus $\pi_1(X) \isom \IZ/2\IZ$.
\item[Case $d' = 2$, $G=G_2$] In this case 
 \begin{gather*}H_1(A) =\langle  (0,2),( 0,2\tau), (2 +\tau,1), \underbrace{(4,4), (\tau, \tau)}_{=H_1(\bar F)}\rangle,\\
  2 (2+\tau,1)+(0,2)+(0,2\tau)\equiv 0 \mod  H_1(\bar F).
 \end{gather*}
We can choose $\alpha = q_*(0,2\tau)$ and $\beta=q_*(2+\tau,1)$, so that 
the relations in \eqref{eq: p_1X} become
\[\alpha=0,  \beta=a+b,  2a = 0, b = 0  \]
and thus $\pi_1(X) \isom \IZ/2\IZ$.
\end{description}

\refenum{ii}
Now assume that $E$ is an elliptic curve  and  $\bar D\subset S^2E$  is a reducible bi-tri-elliptic curve of genus 2. Then as explained in Remark \ref{rem: reducible bi-tri-elliptic} we have $\bar D = C_1+C_2$ where $C_1\isom E$ is a section and $C_2$ is a bisection which meet in one point.  Furthermore, the bi-elliptic map  $\bar D \to D=C_1$ is   induced by the identity on $C_1$ and  restricts to an isomorphism $\phi\colon C_2\to C_1$. 

Thus the subgroup $\pi_1(C_1)\ast \{1\} \subset \pi_1(\bar D) = \pi_1(C_1) \ast \pi_1(C_2)$ maps isomorphically both onto $\pi_1(\bar X) \isom   \pi_1(E)$ and onto $\pi_1(D) = \pi_1(C_1)$. Hence
\begin{align*}
 \pi_1(X) &\isom  \pi(\bar X) \ast_{\pi_1(\bar D)}\pi_1(D)\\
 &\isom  \pi_1(C_1) \ast_{\left(\pi_1(C_1)\ast\pi_1(C_2)\right)}\pi_1(C_1)\\
 &\isom  \frac{\pi_1(C_1)}{\phi_*\pi_1(C_2)}
\isom \{1\},
\end{align*}
because $\phi_*$ is an isomorphism.
\end{proof}

\subsection{Case \casedP}\label{sect: case dP}
In this case $\bar X$ is a del Pezzo surface of degree $1$ (possibly with canonical singularities) and $\bar D\in |-2K_{\bar X}|$ is a nodal curve. If there exists an involution $\tau$ such that $(\bar X, \bar D, \tau)$ gives rise to a Gorenstein stable Godeaux surface $X$ then \cite[Lem.~3.5]{FPR15a} implies that $\bar D$ is the union of two nodal anti-canonical curves and that the non-normal locus $D\subset X$ has arithmetic genus $2$. Thus there is a unique possibility for the glueing and we get the following:
\begin{prop}\label{prop: Godeaux from dP}
If $X$ is a Gorenstein stable Godeaux surface with normalisation a   del Pezzo surface of degree $1$. Then $X$ is as described in \cite{rollenske16}. Moreover, $X$ is simply connected and not smoothable. 
\end{prop}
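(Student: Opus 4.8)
The plan is to first pin down the glueing datum and thereby identify $X$ with the surface constructed in \cite{rollenske16}, and only afterwards read off its topology. By the discussion in \S\ref{ssec: glue} together with \cite[Lem.~3.5]{FPR15a} (recalled in the paragraph preceding the statement), the conductor $\bar D\in|-2K_{\bar X}|$ is forced to be a sum $\bar D=\bar D_1+\bar D_2$ of two nodal anti-canonical curves and the non-normal locus satisfies $p_a(D)=2$. I would then argue that the Gorenstein-glueing condition leaves essentially no freedom in the choice of $\tau$: since the two components must be matched and $\tau$ has to restrict to a fixed-point-free involution on the preimages of the nodes, the involution is determined up to isomorphism. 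Hence the triple $(\bar X,\bar D,\tau)$, and so $X$, coincides with the surface of \cite{rollenske16}, which gives the first assertion.

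For the fundamental group I would use the Seifert--van Kampen formula \cite[Cor.~3.2\,\refenum{iii}]{FPR15b}, exactly as in Propositions \ref{prop: pi1 case P} and \ref{prop: pi1 case E+}, which yields $\pi_1(X)\isom\pi_1(\bar X)\ast_{\pi_1(\bar D)}\pi_1(D)$. The first input is $\pi_1(\bar X)=\{1\}$: since $\bar X$ is a del Pezzo surface with at worst canonical singularities, its minimal resolution $\wt{\bar X}$ is a rational (hence simply connected) weak del Pezzo surface, and the exceptional locus of $\wt{\bar X}\to\bar X$ is a disjoint union of trees of rational $(-2)$-curves, each simply connected; applying Lemma \ref{lem: pi1 contraction} to this birational morphism gives $\pi_1(\bar X)=\{1\}$. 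With $\pi_1(\bar X)$ trivial the amalgamated product collapses to $\pi_1(D)/\langle\langle\pi_*\pi_1(\bar D)\rangle\rangle$, so it remains only to show that the restriction $\pi\colon\bar D\to D$ of the normalisation is surjective on fundamental groups.

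The surjectivity is the one genuinely fiddly point, and I would handle it by the same cell-decomposition bookkeeping used in the proof of Proposition \ref{prop: pi1 case P}: describe $D$ as the genus-$2$ curve obtained by glueing $\bar D_1\sqcup\bar D_2$ along $\tau$, write down generating loops of $\pi_1(D)$, and check that each lifts to a loop on $\bar D$, so that $\pi_*$ is onto and $\pi_1(X)=\{1\}$. I expect this combinatorial verification, rather than any conceptual difficulty, to be the main obstacle; it can alternatively simply be imported from \cite{rollenske16}. Finally, non-smoothability is invisible at the level of this topological description and is a deformation-theoretic statement; here I would not reprove it but invoke \cite{rollenske16} directly, as announced in the statement.
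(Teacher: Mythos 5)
Your proposal matches the paper's treatment: the paper's entire argument is the paragraph preceding the statement (forcing $\bar D$ to be two nodal anti-canonical curves with $p_a(D)=2$ via \cite[Lem.~3.5]{FPR15a} and then observing the glueing is unique), with the explicit description, the simple connectivity and the non-smoothability all imported from \cite{rollenske16}, exactly as you do. Your additional sketch of a direct Seifert--van Kampen computation of $\pi_1(X)$ is a reasonable supplement the paper does not carry out, but as you yourself note its key step (surjectivity of $\pi_*\colon\pi_1(\bar D)\to\pi_1(D)$) is left unverified and would in the end still be checked or cited from \cite{rollenske16}.
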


\section{Conclusions}\label{sect: conclusions}
In this section we collect the results of the previous sections and in particular prove all claims contained in Table \ref{tab: list}. 

\begin{thm}\label{thm: total}
 Let $X$ be a Gorenstein stable Godeaux surface with non-canonical singularities. 
 \begin{enumerate}
  \item If $X$ is normal then it is either of type \caseR\  or \caseB{1} or \caseB{2} described in Section \ref{sect: normal classification}.
  \item 
 If $X$ is not normal, then $X$ either has normalisation $\IP^2$ and is as described in Proposition \ref{prop: Godeaux from P}, or the normalisation of $X$ is a symmetric product of elliptic curves and $X$ is as in Proposition \ref{prop: classification Godeaux from E+}, or the normalisation of $X$ is a del Pezzo surface of degree $1$ and $X$ is as described in Section \ref{sect: case dP}.
 \end{enumerate}
 \end{thm}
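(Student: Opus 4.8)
The plan is to recognise that this statement is a collation of the classification results established in the previous sections, so the proof consists in splitting according to whether $X$ is normal and then invoking the relevant propositions in each regime. I would first record the elementary observation that a non-normal surface automatically has worse-than-canonical singularities, since canonical singularities are normal; hence in \refenum{ii} the non-canonicity hypothesis is automatic, whereas in \refenum{i} it is precisely the input $k>0$ that powers Proposition \ref{prop: normal-class}.

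For \refenum{i}, assuming $X$ normal, I would apply Proposition \ref{prop: normal-class} to the normal Gorenstein stable surface $X$ with $\chi(X)=K_X^2=1$ and worse-than-canonical singularities: it then has a single elliptic singularity and is either of type \caseR\ or of type (B). In the latter case I would invoke Proposition \ref{prop:godeaux-normal-B} to identify $X$ with one of the two explicit bi-elliptic constructions, that is, $X$ is of type \caseB{1} or \caseB{2}. This disposes of the normal case.

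For \refenum{ii}, assuming $X$ non-normal, I would pass to the normalisation $\pi\colon\bar X\to X$ and assemble Koll\'ar's gluing datum $(\bar X,\bar D,\tau)$ as in Section \ref{ssec: glue}, so that $X$ is reconstructed from a triple satisfying the lc-pair, $K_X^2$-, Gorenstein-gluing and Godeaux conditions. The decisive input is \cite[Thm.~3.6]{FPR15a}: a triple with $\bar D\neq 0$ meeting all four conditions can occur only in the three cases \casedP, \caseP{} and \caseE+, according to whether $\bar X$ is a del Pezzo surface of degree $1$, the projective plane $\pp^2$, or the symmetric square $S^2E$ of an elliptic curve. In each case I would then quote the corresponding fine classification: Proposition \ref{prop: Godeaux from P} when $\bar X\cong\pp^2$, Proposition \ref{prop: classification Godeaux from E+} when $\bar X\cong S^2E$, and Section \ref{sect: case dP} together with \cite{rollenske16} in the del Pezzo case.

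Since every hard step has already been carried out in the cited statements, the only real task remaining is exhaustiveness: I would check that the normal/non-normal dichotomy together with the automatic non-canonicity of non-normal surfaces leaves no surface unaccounted for, and that the three-way split of the non-normal case is exactly the trichotomy of \cite[Thm.~3.6]{FPR15a}. There is thus no genuine obstacle in this theorem itself; the substantive mathematics --- the bound $d\le 5$ in type \caseR\ (Proposition \ref{prop:torsion-bound-normal}), the Frey--Kani analysis underlying Proposition \ref{prop: classification Godeaux from E+}, and the plane-quartic bookkeeping of Proposition \ref{prop: Godeaux from P} --- has been deliberately isolated into those earlier results, of which this final theorem is the formal summary.
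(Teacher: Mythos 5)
Your proposal is correct and follows exactly the paper's own argument: item \refenum{i} is Proposition \ref{prop: normal-class} combined with Proposition \ref{prop:godeaux-normal-B}, and item \refenum{ii} is the trichotomy of \cite[Thm.~3.6]{FPR15a} together with Propositions \ref{prop: Godeaux from P}, \ref{prop: classification Godeaux from E+}, and \ref{prop: Godeaux from dP}. The additional remark that non-normality automatically implies non-canonical singularities is a sensible clarification but does not change the substance.
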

\begin{proof}
 The first item is Proposition \ref{prop: normal-class} with Proposition \ref{prop:godeaux-normal-B}.
 
 The second item follows from \cite[Thm.~3.6]{FPR15a} together with Propositions \ref{prop: Godeaux from P}, \ref{prop: classification Godeaux from E+}, and  \ref{prop: Godeaux from dP}.
\end{proof}

\begin{cor}
 Let $X$ be a Gorenstein stable Godeaux surface. Then $\pi_1^\text{alg}(X)$ is cyclic of order at most $5$. If $X$ has non-canonical singularities or  $|\pi_1^\text{alg}(X)|\geq 3$ then in addition $\pi_1(X)=\pi_1^\text{alg}(X)$.
\end{cor}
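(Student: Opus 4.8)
The plan is to split the proof according to whether $X$ has canonical or non-canonical singularities, using throughout the standard fact that for a connected complex variety the algebraic fundamental group $\pi_1^{\alg}(X)$ is the profinite completion of the topological fundamental group $\pi_1(X)$ (this holds even when $X$ is singular or non-normal, via the Riemann existence theorem). In particular, whenever $\pi_1(X)$ is \emph{finite} one automatically has $\pi_1^{\alg}(X)=\pi_1(X)$, so for such $X$ both the order bound on $\pi_1^{\alg}(X)$ and the equality $\pi_1(X)=\pi_1^{\alg}(X)$ will follow at once from a computation of $\pi_1(X)$.

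First I would treat the case that $X$ has non-canonical singularities. By Theorem \ref{thm: total} the surface $X$ is either normal of type \caseR, \caseB{1} or \caseB{2}, or non-normal with normalisation $\IP^2$, a symmetric product of an elliptic curve, or a del Pezzo surface of degree $1$. In every one of these cases the group $\pi_1(X)$ has already been determined: by Theorem \ref{thm:godeaux-normal-torsion} in the normal cases, by Proposition \ref{prop: pi1 case P} when the normalisation is $\IP^2$, by Proposition \ref{prop: pi1 case E+} in the elliptic case, and by Proposition \ref{prop: Godeaux from dP} in the del Pezzo case. Inspecting these results, $\pi_1(X)$ is always cyclic of order at most $5$, hence finite, so $\pi_1^{\alg}(X)=\pi_1(X)$ is cyclic of order at most $5$. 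This settles both assertions of the corollary in the non-canonical case.

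It remains to handle the case that $X$ has at most canonical singularities, where the classification of the present paper does not apply. Here $X$ is a numerical Godeaux surface in the classical sense, and the statement that $\pi_1^{\alg}(X)$ is cyclic of order $d\le 5$ is precisely the theorem of Miyaoka and Reid \cite{miyaoka76, reid78}. For the second assertion one may assume $d\ge 3$; in this range I would appeal to Reid's analysis of Godeaux surfaces with torsion $\IZ/d$ for $d=3,4,5$ (see \cite{reid78, coughlan-urzua16}), where the connected \'etale $\IZ/d$-cover associated to $\pi_1^{\alg}(X)$ is realised as an explicit simply connected surface---for instance a quintic in $\IP^3$ when $d=5$. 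Such a cover is then the universal cover of $X$, giving $\pi_1(X)=\IZ/d=\pi_1^{\alg}(X)$.

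The crux is the equality $\pi_1(X)=\pi_1^{\alg}(X)$, not the order bound. In the non-canonical cases it comes for free, as a formal consequence of the finiteness of $\pi_1(X)$ already computed. The genuine geometric content lies in the canonical case, where one needs the universal covers for $d=3,4,5$ to be simply connected; this input is available only for $d\ge 3$, which is exactly why the hypothesis $|\pi_1^{\alg}(X)|\ge 3$ appears and cannot be relaxed, the finiteness of $\pi_1(X)$ being unknown for canonical Godeaux surfaces with $d\le 2$.
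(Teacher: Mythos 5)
Your proposal is correct and follows essentially the same route as the paper: split into the canonical case (Miyaoka--Reid for the bound, Reid's classification for the equality when the torsion is large) and the non-canonical case (read off the finite cyclic fundamental groups from Theorem \ref{thm: total} together with Theorem \ref{thm:godeaux-normal-torsion} and Propositions \ref{prop: pi1 case P}, \ref{prop: pi1 case E+}, \ref{prop: Godeaux from dP}, and conclude $\pi_1=\pi_1^{\mathrm{alg}}$ from finiteness). Your added remarks on the profinite-completion mechanism and on why the hypothesis $|\pi_1^{\mathrm{alg}}(X)|\geq 3$ is needed merely make explicit what the paper leaves implicit.
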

\begin{proof}
The case of canonical singularities is in \cite{miyaoka76}, and   the equality between algebraic and topological fundamental group  in the case of large torsion follows from the classification in \cite{reid78}.
 
The cases with non-canonical singularities are listed in Theorem \ref{thm: total} and their fundamental groups have been computed in Theorem \ref{thm:godeaux-normal-torsion}, Propositions \ref{prop: pi1 case P} and \ref{prop: pi1 case E+} and \cite[Prop.~2.3]{rollenske16}. Since in every case these are finite cyclic of order at most $5$, they coincide with the algebraic fundamental groups.
\end{proof}

It is an interesting question, whether the examples that we construct are smoothable, that is, occur as degenerations of smooth Godeaux surfaces. As so far we can only attack this problem by an indirect method, i.\,e., by extending the classification of classical Godeaux surfaces with $|T(X)|\geq 3$ due to Reid \cite{reid78} to Gorenstein stable surfaces. 
\begin{thm}[Theorem 1 in \cite{franciosi-rollenske16}]\label{thm: smoothable T>2}
 Every Gorenstein stable Godeaux surface with $|T(X)|\geq 3$ is smoothable. 
\end{thm}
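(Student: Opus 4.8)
The plan is to prove smoothability uniformly by passing to the torsion cover and extending Reid's structure theory \cite{reid78} from the classical to the Gorenstein stable setting. By the classification recalled above the torsion group $T(X)$ is cyclic of order $d\in\{3,4,5\}$, so there is a connected \'etale $\IZ/d$-cover $\epsilon\colon Y\to X$ determined by $T(X)$. Since $\epsilon$ is \'etale and $X$ is Gorenstein stable, $Y$ is again a Gorenstein stable surface, now with $K_Y^2=d$, $\chi(\OO_Y)=d$, $q(Y)=0$ and $p_g(Y)=d-1$, and it carries a free $\IZ/d$-action with quotient $X$. A $\IZ/d$-equivariant smoothing of $Y$ on which the action stays free descends to a smoothing of $X$, so it suffices to produce one.

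The first real step is to describe $Y$ via its canonical ring. The $\IZ/d$-action splits $R(Y)=\bigoplus_n H^0(nK_Y)$ into character eigenspaces, whose graded pieces are the rings $R(X,K_X+L_\chi)$ for the torsion line bundles $L_\chi$. I would check that Reid's format for $R(Y)$ persists here: for $d=5$ this realises the canonical model of $Y$ as a quintic in $\IP^3=\IP H^0(K_Y)$ with the standard diagonal action $\zeta\cdot(x_0,\dots,x_3)=(x_0,\zeta x_1,\zeta^2x_2,\zeta^3x_3)$, and for $d=3,4$ as a subvariety of a fixed weighted projective space cut out by equations of prescribed weights (a codimension-two equivariant complete intersection in the $d=4$ case). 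The inputs to Reid's computation are Riemann--Roch, the ampleness of $K_X$, and Kodaira-type vanishing; the point is that these remain available for Gorenstein stable $X$ (e.g.\ the vanishing and dimension counts of \cite{FPR15a,liu-rollenske16} used repeatedly above), so the generators and relations are the same as in the smooth case and nothing new is forced by the non-canonical singularities.

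With the format in hand I would run the standard deformation argument. The $\IZ/d$-equivariant subvarieties of the fixed ambient space having this format are parametrised by an open subset $U$ of an affine space, namely the space of admissible choices of equations in each eigenspace. Reid's classical Godeaux surfaces with torsion $\IZ/d$ are exactly the smooth members with free action, and our $Y$ is another point of $U$; shrinking $U$ around it, the tautological family $\mathcal Y\to U$ is a flat family of Gorenstein stable surfaces carrying a fibrewise free $\IZ/d$-action, whose quotient is a flat family $\mathcal X\to U$ of stable Godeaux surfaces containing $X$. It then remains to show that the smooth locus is dense in $U$, so that $X$ is a limit of classical Godeaux surfaces and hence smoothable.

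The main obstacle is the structure theorem for $Y$ in the singular setting: one must prove that $|K_X+L_\chi|$ has exactly the expected dimension and base locus and that the relations among the chosen generators match Reid's, using only the vanishing theorems valid for semi-log-canonical surfaces, and then verify that the general equivariant member of the resulting family is smooth with free action. The delicate cases are $d=4$, where the format is a codimension-two complete intersection rather than a hypersurface, and $d=3$; in each one has to rule out that the Gorenstein stable surfaces occupy a separate component of $U$ on which the general member is still singular.
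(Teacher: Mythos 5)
This theorem is not proved in the present paper at all: it is quoted verbatim as Theorem~1 of \cite{franciosi-rollenske16}, and the only hint given here is the sentence preceding it, namely that the authors attack smoothability ``by extending the classification of classical Godeaux surfaces with $|T(X)|\geq 3$ due to Reid \cite{reid78} to Gorenstein stable surfaces.'' Your strategy --- pass to the \'etale $\IZ/d$-cover $Y$, show that the canonical ring of $Y$ has Reid's format (quintic in $\IP^3$ for $d=5$, the analogous weighted formats for $d=4,3$), and conclude smoothability because the equivariant parameter space is irreducible and its smooth locus is nonempty, hence dense --- is exactly that indirect method, and it is consistent with the one explicit trace of the cited paper that appears here (the remark that the universal cover of $X_{1.5}$ is a $\IZ/5$-invariant union of five planes in $\IP^3$). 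So you have identified the right approach.

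The problem is that what you have written is a programme, not a proof: every step that carries actual content is introduced with ``I would check'' or listed under ``the main obstacle,'' and those deferred steps are precisely the theorem. Concretely: (a) your invariant count $q(Y)=0$, $p_g(Y)=d-1$ is not free in the slc setting --- it is equivalent to $h^0(K_X+\eta)=1$ for every nontrivial torsion element $\eta$, i.e.\ to the vanishing $h^1(K_X+\eta)=0$, which must be established before the eigenspace decomposition of $R(Y)$ can even be set up; (b) reproducing Reid's generators and relations requires base-point-freeness and birationality statements for the (para)canonical and bicanonical systems of a Gorenstein stable surface, which do not follow formally from Riemann--Roch and need the curve-theoretic machinery of \cite{FPR15a, liu-rollenske16} (and, for the non-normal examples of Section~\ref{sect: case P} and \ref{sect: case E}, one must handle covers of non-normal surfaces); and (c) you yourself flag, without resolving, the possibility that the stable surfaces sit on a separate component of the parameter space --- ruling this out is the whole point. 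As a roadmap your proposal matches the intended argument; as a proof it has not yet started on the parts that are hard.
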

On the other hand it was shown the simply-connected examples of type \casedP\ are not smoothable \cite{rollenske16}. In other words, the moduli space of stable Godeaux surfaces has at least one irreducible component which does not contain any smooth surface. 

Unlike in the classical case, topological invariants do not distinguish between different connected components of the moduli space, since the topology of degenerations can differ drastically from the topology of a general fibre in the family. However, we can show the following. 

\begin{prop}\label{prop:normal-torsion}
The compactification of the moduli space of Godeaux surfaces with torsion 0 or $\IZ/2$ contains no normal Gorenstein surface with worse than canonical singularities.
\end{prop}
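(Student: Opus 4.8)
The plan is to argue by contradiction, using the behaviour of the (algebraic) fundamental group under specialisation in a degenerating family, with the normality of the central fibre as the decisive hypothesis. Suppose a normal Gorenstein Godeaux surface $X$ with worse than canonical singularities lay in the closure of the locus $\gothM^{\le 2}$ of classical Godeaux surfaces with $|T|\le 2$. By the classification (Theorem \ref{thm: total}) $X$ is of type \caseR, \caseB{1} or \caseB{2}, and by Theorem \ref{thm:godeaux-normal-torsion} its fundamental group $\pi_1(X)$ is cyclic of order $d$ with $3\le d\le 5$; being finite, it coincides with the algebraic fundamental group, $\pi_1^{\alg}(X)\cong \IZ/d$. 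Since $[X]$ lies in the closure of $\gothM^{\le 2}$, I can choose an irreducible arc through $[X]$ in $\overline\gothM_{1,1}$ whose generic point lands in $\gothM^{\le 2}$, and lift it (after a ramified base change, which does not change the fibres up to isomorphism) to the moduli stack, obtaining a family $\phi\colon\mathcal X\to\Delta$ over the disc whose central fibre is $X$ and whose general fibre $X_t$ is a canonical Godeaux surface with $|T(X_t)|\le 2$.

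The first preparatory step is to arrange that the total space $\mathcal X$ is normal. All fibres are Gorenstein, hence Cohen--Macaulay, so $\mathcal X$ is $S_2$; moreover all fibres are normal surfaces (the $X_t$ have at most canonical, hence isolated, singularities and $X$ is normal), so the singular locus of $\mathcal X$ is contained in the singularities of the fibres and sweeps out a set of dimension $\le 1$, which has codimension $\ge 2$ in the threefold $\mathcal X$. Thus $\mathcal X$ is $R_1$, and after shrinking $\Delta$ Serre's criterion gives that $\mathcal X$ is normal.

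The heart of the matter is the surjectivity of the specialisation map $\pi_1^{\alg}(X_t)\twoheadrightarrow\pi_1^{\alg}(X)$. Since $\mathcal X$ is normal and proper over the disc, it deformation retracts onto its central fibre, so $\pi_1(\mathcal X)\cong\pi_1(X)$. Writing $U=\mathcal X\setminus X$, the restriction $U\to\Delta^\ast$ is a topological fibre bundle (by topological triviality of the family over the punctured disc), giving the exact sequence $\pi_1(X_t)\to\pi_1(U)\to\pi_1(\Delta^\ast)\to 1$; on the other hand the inclusion of the open dense set $U$ induces a surjection $\pi_1(U)\twoheadrightarrow\pi_1(\mathcal X)$ whose kernel is normally generated by a meridian of the divisor $X$. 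As such a meridian maps to a generator of $\pi_1(\Delta^\ast)\cong\IZ$, the image of $\pi_1(X_t)$ together with the meridian generates all of $\pi_1(U)$, whence $\pi_1(X_t)\to\pi_1(\mathcal X)\cong\pi_1(X)$ is already surjective. Passing to profinite completions yields the claimed surjection of algebraic fundamental groups; alternatively one may invoke the classical specialisation theorem for \'etale fundamental groups of Grothendieck (SGA~1, Exp.~X), whose hypotheses are met precisely because the central fibre is normal.

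Combining the two ends gives the contradiction: by the duality between $\pi_1^{\alg}$ and $T$ recalled in the introduction, $|\pi_1^{\alg}(X_t)|=|T(X_t)|\le 2$, yet the surjection exhibits $\IZ/d$ as a quotient of a group of order $\le 2$, which is impossible for $d\ge 3$. I expect the genuinely delicate point to be the specialisation surjectivity, and in particular its reliance on the normality of the central fibre; this is exactly the hypothesis of the proposition, and it cannot be dropped, since some \emph{non-normal} examples with trivial torsion are smoothable (for instance the reducible-polarisation cases of Proposition \ref{prop: pi1 case E+}). A secondary point requiring care is the honest construction of the family $\mathcal X$ from the coarse/stacky moduli space via a base change, as indicated above. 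This conclusion is also consistent with Theorem \ref{thm: smoothable T>2}: the normal non-canonical surfaces, having $|T|\ge 3$, are smoothable, but only into the components of large torsion.
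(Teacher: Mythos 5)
Your argument is correct and reaches the same contradiction as the paper, but by a genuinely different route. The paper reduces everything to Proposition \ref{prop:angelo} (due to Vistoli): for \emph{any} proper flat family with reduced connected fibres, a cyclic subgroup $\IZ/d\subseteq T(X_0)$ persists in $T(X_t)$; this is proved cohomologically, by extending the \'etale $\IZ/d$-cover of the central fibre to the total space after a ramified base change, via the Leray spectral sequence for $(\IZ/d)$-coefficients and the vanishing of $H^1(\Delta, j_!j^*R^1\pi_*(\IZ/d))$ once the monodromy of $R^1\pi_*$ is killed. You instead prove the dual statement --- surjectivity of $\pi_1(X_t)\to\pi_1(X_0)$ --- topologically: normality of all fibres gives normality of the threefold $\mathcal X$ by Serre's criterion, whence $\pi_1(U)\twoheadrightarrow\pi_1(\mathcal X)\cong\pi_1(X_0)$, and the meridian of the central fibre, which bounds a transverse disc, kills the $\pi_1(\Delta^*)$-direction in the homotopy sequence of the bundle $U\to\Delta^*$. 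Both arguments are sound; yours is more geometric and works directly with the topological fundamental group, while the paper's is more robust: it requires only reduced connected fibres and in particular applies verbatim to non-normal central fibres. This exposes the one inaccuracy in your commentary: normality is \emph{not} what makes the specialisation step work, and the smoothable non-normal simply connected examples (e.g.\ the reducible-polarisation cases of type \caseE{m}) do not contradict specialisation surjectivity --- a surjection onto the trivial group is vacuous. The restriction to normal surfaces in the statement comes entirely from Theorem \ref{thm:godeaux-normal-torsion}, which guarantees $|T(X)|\geq 3$ only in the normal non-canonical case; this is where normality is genuinely used, not in the degeneration argument.
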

Note  that the Gorenstein assumption is crucial  as is shown by the examples in \cite{lee-park07, urzua16pre}. 
Proposition \ref{prop:normal-torsion} follows directly from Theorem \ref{thm:godeaux-normal-torsion} and  the following general result, for whose proof  we are indebted to Angelo Vistoli.
\begin{prop}\label{prop:angelo}
Let $\Delta\subset \IC$ be the unit disk and let $\pi \colon \mathcal X\to \Delta$ be a proper flat family of over $\Delta$ with reduced connected fibres. \newline Denote by $X_t$ the fibre over $t\in \Delta$; if  
$T(X_0)$ contains a cyclic subgroup of order $d$, then  $T(X_t)$ also contains such a subgroup. 
\end{prop}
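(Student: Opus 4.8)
My plan is to reinterpret the torsion as a cyclic \'etale cover, spread that cover over the whole family, and then use semicontinuity to keep it connected on the nearby fibres.

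First I would set up a dictionary. For a reduced connected proper complex space $X$ with $H^0(X,\OO_X)=\IC$, I claim that specifying a cyclic subgroup of order $d$ in $T(X)$ is the same as specifying a connected \'etale Galois cover $Y\to X$ with group $\IZ/d$. In one direction an element $\eta\in T(X)$ of order exactly $d$ gives the cyclic cover $Y=\mathrm{Spec}_X\big(\bigoplus_{i=0}^{d-1}\eta^{-i}\big)$, which is \'etale since $d$ is invertible in $\IC$ and connected precisely because $\eta$ has order $d$; conversely, decomposing $p_*\OO_Y$ into the characters of $\IZ/d$ recovers a torsion line bundle, which has order exactly $d$ iff $Y$ is connected. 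This dictionary applies to each fibre, which is reduced and connected by hypothesis.

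Next I would spread the cover out. Applying the dictionary to $X_0$ produces a connected \'etale $\IZ/d$-cover $Y_0\to X_0$, equivalently a surjection $\pi_1(X_0)\twoheadrightarrow\IZ/d$. After shrinking $\Delta$, the central fibre is a deformation retract of the total space, so the inclusion induces an isomorphism $\pi_1(X_0)\isom\pi_1(\mathcal X)$; the surjection therefore extends to $\pi_1(\mathcal X)\twoheadrightarrow\IZ/d$ and determines a connected \'etale $\IZ/d$-cover $p\colon\mathcal Y\to\mathcal X$ restricting to $Y_0$ over $0$. As $p$ is finite and $\mathcal X\to\Delta$ is proper and flat, the induced family $\mathcal Y\to\Delta$ is again proper and flat, and its fibres $Y_t=p^{-1}(X_t)$, being \'etale over the reduced $X_t$, are reduced.

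The crux, and the step I expect to require the most care, is the connectedness of $Y_t$ for $t$ near $0$. Here I would invoke that for a proper flat family with reduced fibres the number of connected components of $Y_t$ equals $\dim_\IC H^0(Y_t,\OO_{Y_t})$, which is upper semicontinuous in $t$ by Grauert's theorem; since $Y_0$ is connected this dimension is $1$ at the origin, hence stays $1$ on a neighbourhood, so $Y_t$ is connected there. Then $Y_t\to X_t$ is a connected \'etale $\IZ/d$-cover and the dictionary returns a cyclic subgroup of order $d$ in $T(X_t)$. The delicate point is the direction of semicontinuity: fibrewise an \'etale cover could a priori disconnect away from $0$, and what rescues the argument is exactly that, the fibres being reduced, components can only merge and never split under specialisation, so a connected special fibre forces the general one to stay connected. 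An algebraic variant would replace the deformation-retraction step by the unique infinitesimal lifting of \'etale covers together with Grothendieck's existence theorem, leaving the semicontinuity step unchanged.
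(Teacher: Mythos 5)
Your proposal is correct, but it takes a genuinely different route from the one in the paper. The paper argues sheaf-theoretically: it identifies $H^1(\mathcal X,\IZ/d)$ with $H^0(\Delta, R^1\pi_*(\IZ/d))$ via the Leray spectral sequence (this is where the hypothesis of reduced connected fibres enters, through $\pi_*(\IZ/d)=(\IZ/d)_\Delta$), and then shows that after a ramified base change $t\mapsto t^m$ the obstruction group $H^1(\Delta, j_!j^*R^1\pi_*(\IZ/d))$ vanishes, so the order-$d$ class on $X_0$ extends over the whole family. You replace this with the topological statement that $X_0$ is a deformation retract of the preimage of a small disk, so that $\pi_1(X_0)\isom\pi_1(\mathcal X)$ and the surjection onto $\IZ/d$ extends for free, with no base change at all. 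That retraction is standard but not completely elementary for proper analytic maps (it needs triangulability or a Whitney-stratification argument); the paper's Leray computation is essentially a way of obtaining the same extension from proper base change on $\Delta$ alone. What your approach buys is an explicit treatment of the final step, which the paper's proof passes over quickly: one must still check that the restricted cover $Y_t\to X_t$ stays connected, i.e.\ that the class does not drop order on nearby fibres, and your Grauert argument ($h^0(\OO_{Y_t})\le h^0(\OO_{Y_0})=1$ for $t$ near $0$, with $h^0$ counting components because the fibres of $\mathcal Y\to \Delta$ are reduced) settles this correctly.

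One sentence of your write-up has the geometry backwards: you assert that ``components can only merge and never split under specialisation.'' If that were the mechanism, a disconnected general fibre could perfectly well limit to a connected special fibre and the argument would collapse. Upper semicontinuity of $h^0$ says the opposite: in a flat proper family with reduced fibres the special fibre has at least as many connected components as the nearby ones, so components can split but not merge in the limit, and it is exactly this that lets connectedness of $Y_0$ propagate to $Y_t$. Since the formal invocation of Grauert in the preceding sentence is the correct one, this is a slip of exposition rather than a gap in the proof.
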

\begin{proof}
Let $Y_0\to X_0$ be an \'etale cyclic cover of degree $d$. We are going to show that, possibly after a base change with a map $\Delta\to \Delta$ of the form $t\mapsto t^m$, $Y_0\to X_0$ is induced by  a cyclic \'etale cover $\mathcal Y\to \mathcal X$. This is the same as proving that, up to a base change as above,  the map $H^1(\mathcal X, \IZ/d)\to H^1(X_0,\IZ/d)$ is surjective. 
Consider the Leray spectral sequence $E_2^{p,q}= H^p(\Delta, R^q\pi_*(\IZ/d)_X)\Rightarrow H^{p+q}(\mathcal X,\IZ/d)$. Since $\pi_*(\IZ/d)_X=({\IZ/d})_{\Delta}$ and the cohomology of $({\IZ/d})_{\Delta}$ is trivial, one has an isomorphism $H^1(\mathcal X,\IZ/d)\to H^0(\Delta, R^1\pi_*(\IZ/d)_X)$. So it suffices to show that it is possible to find a base change $\Delta\to \Delta$ such that $H^0(\Delta,R^1\pi_*(\IZ/d)_X)\to H^1(X_0, \IZ/d)$ becomes  surjective.

In order to prove this, we look at the following  exact  sequence:
\begin{equation}\label{eq:seqj}
0\to j_!j^*R^1\pi_*(\IZ/d)_X\to R^1\pi_*(\IZ/d)_X\to H^1(X_0,\IZ/d)_0\to 0,
\end{equation}
where $\Delta^*=\Delta\setminus \{0\}$ and $j\colon \Delta^*\to \Delta$ is the inclusion and $H^1(X_0,\IZ/d)_0$ is the sheaf concentrated in $0$ with stalk $H^1(X_0,\IZ/d)$.
Sequence \ref{eq:seqj} shows that the obstruction to the surjectivity of the map we are interested in lies in $H^1(\Delta,  j_!j^*R^1\pi_*(\IZ/d)_X)$. So it is enough to show that this group vanishes after taking a suitable base change $\Delta\to\Delta$. Indeed, $j^*R^1\pi_*(\IZ/d)_X$  is a locally constant sheaf on $\Delta^*$ and it has finite monodromy since $H^1(X_t, \IZ/d)$ is a finite group for $t\in \Delta$. So we can find a cover $\Delta\to \Delta$ ramified in $0$ such that $j^*R^1\pi_*(\IZ/d)_X$ is    a constant sheaf $A_{\Delta^*}$ associated with a finite abelian group $A$. Consider now the sequence:
$$0\to j_!(A_{\Delta^*})\to A_{\Delta}\to A_0\to 0,$$
where $A_0$ denotes the group $A$ concentrated at $0$; taking cohomology, one immediately gets  $H^1\left(\Delta, j_!(A_{\Delta^*})\right)=0$.
\end{proof}

\begin{rem}\label{rem: smoothing reducible polarisation}
 Note however that the fundamental group need not be constant under degeneration. As an explicit example consider the following: consider a bi-tri-elliptic smoothing of a nodal bi-tri-elliptic curve. Then performing the construction of the surface of type \caseE{m} in families, we see that the one with reduced polarisation occurs as a degeneration of surfaces of type \caseE{m} with irreducible polarisation. By Proposition \ref{prop: pi1 case E+} the former is simply connected while the latter has cyclic fundamental group of order $m\leq 5$, so $\pi_1$ gets smaller in the special fibre.
 
 On the other hand, since Godeaux surfaces with $|T(X)|\geq 3$ are smoothable by \cite[Thm.~A]{franciosi-rollenske16}, this construction implies that all surfaces of type \caseE{m} are smoothable if $m\geq3$.

 We do not dare speculate, whether  it is at these points that  the moduli space of stable Godeaux surfaces becomes connected.
\end{rem}

 \bibliographystyle{../halpha}
 \bibliography{../srollens}

 \end{document}